\setlist[itemize]{leftmargin=12mm}
\setlist[enumerate]{leftmargin=12mm}
\DeclareMathOperator{\Rad}{Rad}
\DeclareMathOperator{\Det}{det}
\DeclareMathOperator{\Aut}{Aut}
\DeclareMathOperator{\GL}{GL}
\DeclareMathOperator{\SL}{SL}
\DeclareMathOperator{\Ker}{Ker}
\DeclareMathOperator{\Gal}{Gal}
\DeclareMathOperator{\norm}{Norm}
\DeclareMathOperator{\ord}{\upsilon}
\newcommand{\Q}{\mathbb{Q}}
\newcommand{\Z}{\mathbb{Z}}
\newcommand{\F}{\mathbb{F}}
\newcommand{\cA}{\mathcal{A}}
\newcommand{\cD}{\mathcal{D}}
\newcommand{\cE}{\mathcal{E}}
\newcommand{\cK}{\mathcal{K}}
\newcommand{\cN}{\mathcal{N}}
\newcommand{\OO}{\mathcal{O}}
\newcommand{\ga}{\mathfrak{a}}
\newcommand{\gb}{{\mathfrak{b}}}
\newcommand{\ff}{\mathfrak{f}}
\newcommand{\fp}{\mathfrak{p}}
\newcommand{\fq}{\mathfrak{q}}
\newcommand{\mP}{\mathfrak{P}}     
\newcommand{\sB}{\mathfrak{B}}
\begin{document}

\newtheorem{thm}{Theorem}
\newtheorem{lem}{Lemma}[section]
\newtheorem{prop}[lem]{Proposition}
\newtheorem{cor}[lem]{Corollary}

\title[Modular elliptic curves]{
Modular elliptic curves over real abelian fields\\
and the generalized Fermat equation
$x^{2\ell}+y^{2m}=z^p$
}

\author{Samuele Anni and Samir Siksek}

\address{Mathematics Institute\\
	University of Warwick\\
Coventry\\
	CV4 7AL \\
	United Kingdom}
\email{samuele.anni@gmail.com}
\email{samir.siksek@gmail.com}

\date{\today}
\thanks{The authors are supported by EPSRC Programme Grant 
\lq LMF: L-Functions and Modular Forms\rq\  EP/K034383/1.
}

\keywords{Elliptic curves, 
modularity, Galois representation, level lowering, irreducibility, generalized Fermat, Fermat--Catalan,
Hilbert modular forms}
\subjclass[2010]{Primary 11D41, 11F80, Secondary 11G05, 11F41}

\begin{abstract}
Let $K$ be a real abelian field of odd class number in which $5$
is unramified. Let $S_5$ be the set of places of $K$ above $5$.
Suppose for every non-empty proper subset $S \subset S_5$ there
is a totally positive unit $u \in \OO_K$ such that
$\prod_{\fq \in S} \norm_{\F_\fq/\F_5}(u \bmod{\fq}) \ne \overline{1}$.
We prove that every semistable elliptic curve over $K$ is modular,
using a combination of several powerful modularity theorems
and class field theory. We deduce that if $K$ is a real abelian
field of conductor $n<100$, with $5 \nmid n$ and $n \ne 29$, $87$, $89$,
then every semistable elliptic curve $E$ over $K$ is modular.

Let $\ell$, $m$, $p$ be prime, with $\ell$, $m \ge 5$ and $p \ge 3$.
To a putative non-trivial primitive
solution of the generalized Fermat $x^{2\ell}+y^{2m}=z^p$
we associate a Frey elliptic curve defined over $\Q(\zeta_p)^+$,
and study its mod $\ell$ representation with the help
of level lowering and our modularity result. 
We deduce the non-existence of non-trivial primitive solutions
if $p \le 11$, or if $p=13$ and $\ell$, $m \ne 7$.
\end{abstract}
\maketitle

\section{Introduction}

Let $p$, $q$, $r \in \Z_{\geq 2}$. The equation
\begin{equation}\label{eqn:FCgen}
x^p+y^q=z^r
\end{equation}
is known as the \textbf{generalized Fermat equation} 
(or the \textbf{Fermat--Catalan equation})
 with signature $(p,q,r)$.
As in Fermat's Last Theorem, one is interested in integer solutions
$x$, $y$, $z$. Such a solution is called \textbf{non-trivial} if
$xyz \neq 0$, and \textbf{primitive} if $x$, $y$, $z$ are coprime.
Let $\chi=p^{-1}+q^{-1}+r^{-1}$. 
The \textbf{generalized Fermat conjecture} 
\citep{DG,Da97},
also known as the Tijdeman--Zagier conjecture
and as the Beal conjecture \citep{Beukers},
is concerned with the case $\chi<1$.
It states that the only non-trivial primitive solutions to
\eqref{eqn:FCgen} with $\chi<1$ are
\begin{gather*}
1+2^3 = 3^2, \quad 2^5+7^2 = 3^4, \quad 7^3+13^2 = 2^9, \quad
2^7+17^3 = 71^2, \\
3^5+11^4 = 122^2, \quad 17^7+76271^3 = 21063928^2, \quad
1414^3+2213459^2 = 65^7, \\
9262^3+15312283^2 = 113^7, \; \,
43^8+96222^3 = 30042907^2, \; \, 33^8+1549034^2 = 15613^3.
\end{gather*}
The generalized Fermat conjecture
has been established for many signatures $(p,q,r)$,
including for several infinite families of signatures,
starting with
Fermat's Last Theorem $(p,p,p)$ by
\citet{Wiles};
$(p,p,2)$ and $(p,p,3)$ by \citet{DM};
$(2,4,p)$ by \citet{El} and 
\citet*{BEN};
$(2p,2p,5)$ by \citet{Bennett}; $(2,6,p)$
by \cite{BC}; and other signatures by other researchers.
An excellent, exhaustive and up-to-date survey was recently compiled by
\citet*{BennettSurvey}, which also proves
the generalized Fermat conjecture for several families of signatures,
including $(2p,4,3)$.

The main Diophantine result of this paper is the following theorem.
\begin{thm}\label{thm:1}
Let $p=3$, $5$, $7$, $11$ or $13$. Let $\ell$, $m \ge 5$ be primes,
and if $p=13$ suppose moreover that $\ell$, $m \ne 7$.
Then the only primitive solutions to
\begin{equation}\label{eqn:main}
x^{2\ell}+y^{2m}=z^p, 
\end{equation}
are the trivial ones 
$(x,y,z)=(\pm 1, 0, 1)$ and $(0, \pm 1, 1)$.
\end{thm}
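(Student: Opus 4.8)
The plan is to attach to a hypothetical non-trivial primitive solution a Frey elliptic curve over the totally real field $K_0=\Q(\zeta_p)^+$, to invoke modularity of $K_0$ (which is available precisely because $p\le 13$), and to force a contradiction via level lowering. Writing a putative solution as $A=x^\ell$, $B=y^m$, we have $A^2+B^2=z^p$ with $\gcd(A,B)=1$; a short parity argument shows $z$ is odd, so $A+Bi$ and $A-Bi$ are coprime in $\Z[i]$, and since $\Z[i]$ is a principal ideal domain $A+Bi=u\,(c+di)^p$ for a unit $u\in\{\pm1,\pm i\}$ and coprime $c,d\in\Z$. Taking real and imaginary parts expresses $x^\ell$ and $y^m$, up to sign and a possible interchange, as the values at $(c,d)$ of the binary forms $\operatorname{Re}\!\big((X+iY)^p\big)=X\prod_j(X^2-\theta_jY^2)$ and $\operatorname{Im}\!\big((X+iY)^p\big)=Y\prod_j(X^2-\theta_j^{-1}Y^2)$, where $\{\theta_j\}$ is the Galois orbit of $\theta:=\tan^2(\pi/p)$, a generator of $K_0$ all of whose bad primes lie above $p$. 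As $K_0$ has class number $1$ for every $p\le 13$, coprimality together with a descent through the unit group forces $c$ to be an $\ell$-th power and each $c^2-\theta_jd^2$ to be a unit times an $\ell$-th power, up to primes $\fp\mid p$. Eliminating $d^2$ between two conjugate relations (or, when $p=3$, between $c^2-3d^2=\pm\delta^\ell$ and $c=c_0^\ell$) yields an identity $\alpha+\beta+\gamma=0$ in $\OO_{K_0}$ with each of $\alpha,\beta,\gamma$ an $\ell$-th power times a constant supported above $p$; the Frey curve $E\colon Y^2=X(X-\alpha)(X+\beta)$ then has $\Delta_E=16(\alpha\beta\gamma)^2$, is semistable away from $2$ and $\fp$, and satisfies $\ell\mid v_\fq(\Delta_E)$ for all $\fq\nmid 2\fp$. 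The imaginary-part factorisation produces, symmetrically, a companion Frey curve $E'$ tied to the exponent $m$.

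Next I would prove $E$ modular. For $p=3$ we have $K_0=\Q$ and $E$ is modular by Wiles, Taylor--Wiles and Breuil--Conrad--Diamond--Taylor. For $p=5$ we have $K_0=\Q(\sqrt5)$, where $5$ ramifies, so the present paper's modularity theorem does not apply, but $E$ is modular by Freitas--Le Hung--Siksek. For $p=7,11,13$ the field $K_0$ is real abelian of conductor $p$, with $p<100$, $5\nmid p$ and $p\notin\{29,87,89\}$, so $E$ is modular by our modularity theorem, after controlling its reduction at $2$ and $\fp$ by a twist or by residual modularity and a modularity lifting theorem as in the proof of that result. Granting modularity, I would show $\bar\rho_{E,\ell}$ irreducible for $\ell\ge 5$: reducibility would give $E$ an $\ell$-isogeny over $K_0$, which is excluded for these low-degree abelian fields by the irreducibility results recalled earlier, using that $E$ is semistable away from $2\fp$ after a quadratic twist. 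Then level lowering for Hilbert modular forms (Fujiwara, Jarvis, Rajaei) yields a Hilbert newform $\mathfrak f$ over $K_0$ of parallel weight $2$ and level $\gN$ dividing an explicit ideal supported only at $2$ and $\fp$, with $\bar\rho_{E,\ell}\cong\bar\rho_{\mathfrak f,\lambda}$ for some prime $\lambda\mid\ell$ of the coefficient field of $\mathfrak f$.

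The crux is to eliminate the finitely many candidate newforms $\mathfrak f$. For each admissible level over each $K_0$ I would compute the space of Hilbert newforms; where it is empty there is nothing to prove. A surviving $\mathfrak f$ should either carry CM or an inner twist, or be attached to an elliptic curve $E''/K_0$ with nontrivial rational torsion, and in each case I would break the congruence $\bar\rho_{E,\ell}\cong\bar\rho_{\mathfrak f,\lambda}$ by comparing $a_\fq(\mathfrak f)$ with the finitely many values of $a_\fq(E)$ that the Frey curve permits, for a handful of auxiliary primes $\fq$ of $K_0$: this forces $\ell$ to divide an explicit nonzero rational integer, hence $\ell\le 3$, against $\ell\ge 5$. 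The single exception is $p=13$, where a newform over $\Q(\zeta_{13})^+$ survives whose mod-$7$ reduction cannot be told apart from $\bar\rho_{E,7}$, so $\ell=7$ must be excluded; running the symmetric argument through $E'$ excludes $m=7$ as well.

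The main obstacles I anticipate are computational and organisational: determining the Hilbert newform spaces at the relevant levels over the quintic field $\Q(\zeta_{11})^+$ and the sextic field $\Q(\zeta_{13})^+$, and then eliminating the survivors; and controlling the construction uniformly over the several choices made along the way — the unit $u$, the conjugate $\theta_j$, and the reduction type of $E$ at $2$ and $\fp$ — since these fix the precise level $\gN$ and hence the list of newforms to be ruled out. The case $p=13$ with $\ell$ or $m$ equal to $7$ is exactly the residue that this elimination cannot reach.
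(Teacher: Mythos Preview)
Your strategy coincides with the paper's: Gaussian descent, a Frey curve $E$ over $K=\Q(\zeta_p)^+$, modularity (FLS for $p=5$, the paper's semistable theorem for $p=7,11,13$), irreducibility of $\bar\rho_{E,\ell}$, level lowering to level dividing $2\fp$, and elimination of the finitely many Hilbert newforms by comparing traces at auxiliary primes. Two minor corrections: the Frey curve is arranged to be genuinely semistable over $K$ (Lemmas~\ref{lem:Frey1} and~\ref{lem:Frey2}), so the modularity theorem applies directly with no twist or auxiliary lifting; and the paper does not build a companion curve from the imaginary part but simply swaps $x^\ell$ and $y^m$ at the outset so that $2\mid x$, which is exactly why both $\ell\ne 7$ and $m\ne 7$ must be assumed when $p=13$.

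The step you most underestimate is irreducibility of $\bar\rho_{E,\ell}$ for \emph{every} $\ell\ge 5$. There is no off-the-shelf isogeny bound over these fields, and the effective constants in the general irreducibility theorems are far too large to be useful here. The paper's Section~\ref{sec:Irred} first uses semistability, narrow class number $1$, and the class-field argument of Proposition~\ref{prop:Krauscf} to show that reducibility would force $E$ or an $\ell$-isogenous curve to have a $K$-rational point of order $\ell$; combined with the full $2$-torsion already on $E$ this yields a non-cuspidal $K$-point on $X_0(4\ell)$. Torsion bounds over fields of degree $\le 6$ (Kamienny, Parent, Derickx--Kamienny--Stein--Stoll) then reduce $\ell$ to a short list, and each surviving pair $(p,\ell)$ is killed by an explicit modular-curve computation or the Bruin--Najman criterion. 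You also omit that for $p=13$ with $13\mid x$ the paper descends the Frey curve to the cubic subfield $K'\subset K$ (Section~\ref{sec:FreyCurve2}), which is what makes the Hilbert newform computation at that level tractable.
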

If $\ell$, $m$ is $2$ or $3$ then \eqref{eqn:main} has
no non-trivial primitive solutions for prime $p \ge 3$;
this follows from the aforementioned work  
on Fermat equations of signatures $(2,4,p)$, $(2,6,p)$ and $(2p,4,3)$.

\medskip

Our approach is unusual in that it treats several bi-infinite
families of signatures. 
We start with a descent argument (Section~\ref{sec:Descent}), inspired
by the approach of \citet{Bennett} for $x^{2n}+y^{2n}=z^5$ and that of \citet{recipes}
for $x^r+y^r=z^p$ with certain small values of $r$. For $p=3$ the descent argument allows us to 
quickly obtain a contradiction (Section~\ref{sec:peq3}) 
through results of \citet{BS}.
The bulk of the paper is devoted to $5 \le p \le 13$.
Our descent allows us to construct Frey curves 
(Sections~\ref{sec:FreyCurve},~\ref{sec:FreyCurve2}) 
attached to \eqref{eqn:main} that
are defined over the real cyclotomic field $K=\Q(\zeta+\zeta^{-1})$
where $\zeta$ is a $p$-th root of unity, 
or, for $p \equiv 1 \pmod{4}$, defined
over the unique subfield $K^\prime$ of $K$ of degree $(p-1)/4$. These Frey curves
are semistable over $K$, though not necessarily over $K^\prime$.

In the remainder of the paper
we study the 
mod $\ell$ representations of these Frey curves using modularity
and level lowering. 
Several recent papers \citep{DF,FS,FSsmall,recipes,BDMS}
apply modularity and level lowering over 
totally real fields to study Diophantine problems.
We need to refine many of the ideas in those papers, 
both because we are dealing with representations over number fields
of relatively high degree, and because we 
are aiming for a \lq clean\rq\ result without any exceptions (the methods
are much easier to apply for sufficiently large $\ell$).
We first establish modularity of the Frey curves by combining
a modularity theorem for residually reducible representations due to \citet{SkinnerWiles}
with a theorem of \citet{Thorne} for residually dihedral representations,
and implicitly applying modularity lifting theorems of \citet{Kisin} and others for representations
with \lq big image\rq. We shall use class field theory to glue together these great modularity theorems
and produce our own theorem (proved in Section~\ref{sec:modularity})
that applies to our Frey curves, but which we expect to be 
of independent interest.
\begin{thm} \label{thm:modularity}
Let $K$ be a real abelian number field. 
Write $S_5$ for the prime ideals $\fq$ of $K$ above $5$.
Suppose
\begin{enumerate}
\item[(a)] $5$ is unramified in $K$;
\item[(b)] the class number of $K$ is odd;
\item[(c)] for each non-empty proper subset $S$ of $S_5$,
 there is some totally positive unit $u$ of $\OO_K$
such that 
\begin{equation}\label{eqn:conditionc}
\prod_{\fq \in S} \norm_{\F_\fq/\F_5}(u \bmod{\fq}) \ne \overline{1} \, .
\end{equation}
\end{enumerate}
Then every semistable elliptic curve $E$ over $K$
is modular.
\end{thm}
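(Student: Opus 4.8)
The plan is to run the standard modularity-lifting machinery for $\bar\rho_{E,\ell}$, the mod $\ell$ Galois representation of $E$, taking $\ell \in \{3,5,7\}$ and showing that in each case one of the available modularity theorems applies. First I would recall that since $K$ is totally real (being real abelian), a semistable elliptic curve $E/K$ has potentially multiplicative or good reduction everywhere, so its mod $\ell$ representations are well understood locally. The argument splits according to the behaviour of $\bar\rho_{E,\ell}$: either it has large image, it is reducible, or it is (absolutely) irreducible with projective image dihedral. In the large-image case, modularity of $E$ follows from residual modularity of $\bar\rho_{E,\ell}$ together with the Kisin--Barnet-Lamb--Gee--Geraghty type lifting theorems (alluded to in the excerpt); residual modularity for small $\ell$ is itself standard via Langlands--Tunnell for $\ell = 3$, and via Shimura curves / the $3$--$5$--$7$ switching trick for $\ell = 5, 7$. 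So the real content lies in the other two cases.

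For the residually reducible case the plan is to invoke the Skinner--Wiles modularity theorem for residually reducible representations, which requires the splitting field of $\bar\rho_{E,\ell}$ to be sufficiently small and ordinarity/distinguishedness hypotheses at $\ell$. Here is where hypotheses (a) and (b) enter: if $\bar\rho_{E,\ell}$ is reducible then $E$ acquires an $\ell$-isogeny over $K$, and the characters on the diagonal, together with class field theory and the assumption that $K$ has odd class number with $\ell$ unramified, should be used to control the associated abelian extensions — in particular to rule out, or sufficiently constrain, the isogeny characters so that the Skinner--Wiles hypotheses are met. For the residually dihedral case I would appeal to Thorne's theorem on residually dihedral representations, which handles exactly the situation where $\bar\rho_{E,\ell}$ is irreducible but becomes reducible over a quadratic extension; one must check that the quadratic field cut out is not the "bad" one excluded by Thorne, and again real abelianness of $K$ plus the class number hypothesis should make the relevant quadratic CM extension impossible or controllable.

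The subtle step — and the one I expect to be the main obstacle — is dispatching the reducible case uniformly for all three primes $\ell = 3, 5, 7$ simultaneously: one needs to rule out the possibility that $\bar\rho_{E,\ell}$ is reducible for \emph{every} such $\ell$, since the switching trick only works if at least one prime gives a representation amenable to a lifting theorem. This is precisely where hypothesis (c) should be decisive. The idea is that a semistable $E/K$ with a $5$-isogeny gives rise to an isogeny character $\lambda : G_K \to \F_5^\times$ whose ramification is constrained (unramified outside $5$ and the primes of bad multiplicative reduction, where it is tamely and explicitly controlled); evaluating $\lambda$ via class field theory on the totally positive units and on decomposition groups at the $\fq \in S_5$ produces the norm products appearing in \eqref{eqn:conditionc}. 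Condition (c) then forces a contradiction unless $S = \emptyset$ or $S = S_5$, i.e. unless $\lambda$ is everywhere unramified or its "conjugate-dual twist" $\lambda^{-1}\chi_5$ is, and in either of those cases the odd class number hypothesis (b) kills the character, so $\bar\rho_{E,5}$ is in fact irreducible. Then one runs the large-image or dihedral argument at $\ell = 5$. So the proof architecture is: use (c) + (b) to get irreducibility at $\ell = 5$; then bootstrap via the $5 \to 3$ or $5 \to 7$ switch to handle the dihedral and small-image obstructions, applying Skinner--Wiles, Thorne, and Kisin as appropriate. Carefully bookkeeping the local conditions at primes above $5$ throughout — ordinarity, Fontaine--Laffaille or flatness, the distinguished-ness of the residual characters — will be the most technical part.
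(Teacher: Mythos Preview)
Your architecture has a genuine gap. You claim that hypotheses (b) and (c) force $\bar\rho_{E,5}$ to be \emph{irreducible}, and then you plan to handle the irreducible case via Thorne/Kisin. But (b) and (c) do \emph{not} force irreducibility: there are plenty of semistable elliptic curves over such $K$ with a $5$-isogeny (already over $\Q$, for instance). Concretely, even once condition (c) forces $S=\emptyset$ or $S=S_5$, so that one of the isogeny characters $\psi_i$ is unramified at all finite places, you cannot conclude that $\psi_i$ is trivial: $\psi_i$ may well be ramified at the archimedean places (complex conjugation has order $2$, so $\psi_i(\tau)\in\{\pm 1\}$, but nothing prevents $\psi_i(\tau)=-1$). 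And even if $\psi_i$ \emph{were} trivial, that would mean $E$ has a $K$-rational $5$-torsion point, not that $\bar\rho_{E,5}$ is irreducible.

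The paper's proof runs the dichotomy the other way, entirely at $\ell=5$ with no $3$--$5$--$7$ switching. If $\bar\rho_{E,5}$ is irreducible, Thorne's theorem (which already subsumes both the big-image and dihedral cases when $5$ is not a square in $K$) gives modularity directly. If $\bar\rho_{E,5}$ is reducible, one does \emph{not} seek a contradiction; instead one verifies the hypotheses of Skinner--Wiles for $\rho_{E,5}$. The only delicate hypothesis is that $K(\psi_1/\psi_2)$ be abelian over $\Q$. Since $\psi_1\psi_2=\chi_5$, this reduces to showing $\psi_i^2$ is trivial for one of $i=1,2$. Now $\psi_i^2$ \emph{is} unramified at infinity (since $\psi_i(\tau)^2=1$), and semistability makes it unramified away from $5$; condition (c) together with Kraus's class-field-theoretic proposition forces one of the $\psi_i^2$ to be unramified at all primes above $5$ as well, hence everywhere unramified. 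Being a \emph{quadratic} character of a field of odd class number, it is then trivial. So the role of (b) and (c) is not to rule out reducibility but to pin down the characters well enough for Skinner--Wiles to apply.
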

Theorem~\ref{thm:modularity} allows us to deduce the following
corollary (also proved in Section~\ref{sec:modularity}).
\begin{cor}\label{cor:modularity}
Let $K$ be a real abelian field of conductor $n < 100$ 
with $5 \nmid n$ and $n \ne 29$, $87$, $89$. Let $E$ be a
semistable elliptic curve over $K$. Then $E$ is modular.
\end{cor}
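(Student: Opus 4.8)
The plan is to deduce Corollary~\ref{cor:modularity} from Theorem~\ref{thm:modularity} by checking its three hypotheses for each real abelian field $K$ of conductor $n<100$ (excluding $n$ divisible by $5$ and $n=29,87,89$). A real abelian field of conductor $n$ is, by the Kronecker--Weber theorem, a subfield of $\Q(\zeta_n)^+$, so there are only finitely many such $K$ for each $n<100$, and they can be enumerated explicitly via the subgroup lattice of $(\Z/n\Z)^\times$. For each such $K$ one must verify: (a) $5$ is unramified in $K$, which is immediate from $5\nmid n$ since the only primes ramifying in $\Q(\zeta_n)^+$ are those dividing $n$; (b) the class number $h_K$ is odd; and (c) the totally positive unit condition on proper nonempty subsets of $S_5$.

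For (b), I would appeal to tables of class numbers of real abelian fields (for instance those computed from the analytic class number formula / the work on real cyclotomic fields and their subfields), noting that for most small conductors $h_K=1$, and in the few remaining cases $h_K$ is still odd; the excluded conductors $29$, $87$, $89$ are presumably exactly the ones (with $5\nmid n$, $n<100$) for which some $K$ has even class number, which is why they must be thrown out. This is essentially a finite table lookup, so I would present it as a table or a citation to standard class number data rather than recomputing.

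For (c), the point is that $S_5$ is determined by the splitting of $5$ in $K$, which in turn is governed by the order of $5$ (equivalently the image of $5$) in $(\Z/n\Z)^\times$ modulo the subgroup defining $K$. When $5$ is inert or totally ramified the relevant consideration collapses ($|S_5|\le 1$, so there are no nonempty proper subsets $S$ and (c) is vacuous); the substantive cases are when $5$ splits into several primes $\fq$. There one must exhibit, for each nonempty proper $S\subsetneq S_5$, a totally positive unit $u\in\OO_K^\times$ with $\prod_{\fq\in S}\norm_{\F_\fq/\F_5}(u\bmod\fq)\ne\overline 1$ in $\F_5^\times$. I would do this computationally: compute a system of fundamental units (and hence totally positive units, e.g. squares of fundamental units or suitable products), reduce them modulo each $\fq\mid 5$, and check the norm-product condition over all subsets $S$. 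Since each residue field is $\F_5$ and $\F_5^\times$ is cyclic of order $4$, these are tiny finite checks once the units are in hand. I expect this computation, feasible in a system like \textsf{Magma} or \textsf{PARI/GP}, to be routine for fields of the degrees occurring here ($[K:\Q]$ divides $\varphi(n)/2 < 50$).

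The main obstacle is the unit computation in hypothesis (c): certifying a full system of fundamental units (not merely an independent system of the right rank) in fields of moderately large degree and discriminant can be expensive, and one must be careful that the units used are genuinely units and the subgroup they generate is of finite index prime to the relevant data — otherwise a failure of the norm-product condition might be spurious, caused by missing units rather than a real obstruction. In practice I would either rely on \textsf{Magma}'s provably correct fundamental unit routines (under GRH if necessary, then noting the checks (a)--(c) are of a form that can be verified unconditionally once candidate units are produced, since the condition \eqref{eqn:conditionc} only needs the \emph{existence} of one good unit) or invoke existing tables. Packaging the verification cleanly for all the fields at once, and explaining why exactly $n=29,87,89$ are excluded (i.e.\ that for those conductors hypothesis (b) or (c) genuinely fails for some subfield), is the part requiring the most care; everything else reduces to the explicit theorem already proved.
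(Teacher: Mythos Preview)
Your approach is correct in principle but differs from the paper's in a way worth noting. The paper does \emph{not} verify hypotheses (a)--(c) of Theorem~\ref{thm:modularity} for every real abelian $K$ of conductor $n$; instead it verifies them only for the maximal real cyclotomic field $\Q(\zeta_n)^+$, and then observes that any real abelian $K$ of conductor $n$ sits inside $\Q(\zeta_n)^+$ with $\Q(\zeta_n)^+/K$ cyclic, so modularity of $E/K$ follows from modularity over $\Q(\zeta_n)^+$ by Langlands' cyclic base change. This buys two things: only one field per conductor needs checking, and for that field the units are explicitly the cyclotomic units (by Sinnott's index formula together with the known fact that $h_n^+=1$ for $n<100$), so no fundamental-unit computation in large-degree fields is required. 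Your direct approach would work, but the unit computations you flag as the main obstacle are exactly what the paper's route sidesteps.

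One factual point: your guess that $n=29,87,89$ are excluded because some subfield has even class number is not right. The paper records that $h_n^+=1$ for all $n<100$, so hypothesis~(b) never fails; these three conductors are excluded because hypothesis~(c) fails for $\Q(\zeta_n)^+$ itself. (Also, the residue fields $\F_\fq$ for $\fq\mid 5$ need not be $\F_5$; they are $\F_{5^f}$ for the appropriate residue degree $f$, though of course the norm still lands in $\F_5^\times$, so your conclusion about the check being finite and small is unaffected.)
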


To apply level lowering theorems to a modular mod $\ell$ representation, one
must first show that this representation is irreducible.
Let $G_K=\Gal(\overline{K}/K)$.
The mod $\ell$ representation that concerns us, denoted $\overline{\rho}_{E,\ell} \; : \; G_K \rightarrow \GL_2(\F_\ell)$,
is the one attached to the $\ell$-torsion of 
our semistable Frey elliptic curve $E$ defined over the field 
$K=\Q(\zeta+\zeta^{-1})$ of degree $(p-1)/2$.
We shall exploit semistability of our Frey curve
 to show, with the help of class field theory,
 that if $\overline{\rho}_{E,\ell}$ is reducible 
then $E$ or some $\ell$-isogenous curve possesses non-trivial $K$-rational 
$\ell$-torsion. Using famous results on torsion of elliptic curves
over number fields of 
small degree due to \citet{Kamienny,Parent1,Parent2,DKSS}
and some computations of $K$-points on certain modular curves,
we prove the required irreducibility result 
(Section~\ref{sec:Irred}). 

The final step (Section~\ref{sec:final}) 
in the proof of Theorem~\ref{thm:1} requires computations 
of certain Hilbert eigenforms
over the fields $K$ together with their eigenvalues at primes of small norm. 
For these computations we have made use of the 
\lq Hilbert modular forms package\rq\ developed by
Demb\'el\'e, Donnelly, Greenberg and Voight and available within the
\texttt{Magma} computer algebra system \citep{Magma}. For the theory behind
this package see  
\citep{DV}. 
For $p \ge 17$,
the required computations
are beyond the capabilities of current software,
though the strategy for proving Theorem~\ref{thm:1}
should be applicable to larger $p$ once these
computational limitations are overcome. 
In fact, at the end of Section~\ref{sec:final}, we heuristically argue  
that
the larger the value
of $p$ is, the more likely that the argument used to complete
the proof of Theorem~\ref{thm:1} will succeed for that particular $p$.
We content ourselves with proving (Section~\ref{sec:proof2})
the following theorem.
\begin{thm}\label{thm:2}
Let $p$ be an odd prime, and
let $K=\Q(\zeta+\zeta^{-1})$ where $\zeta=\exp(2 \pi i/p)$.
Write $\OO_K$ for the ring of integers in $K$ and
$\fp$ for the unique prime ideal above $p$.
Suppose that there are no
elliptic curves $E/K$ with full $2$-torsion and
conductors $2 \OO_K$, $2 \fp$. Then there is an
ineffective constant $C_p$ (depending only on $p$)
such that for all primes 
$\ell$, $m \ge C_p$, the only primitive solutions to
\eqref{eqn:main}
are the trivial ones 
$(x,y,z)=(\pm 1, 0, 1)$ and $(0, \pm 1, 1)$.

If $p \equiv 1 \pmod{4}$ then let $K^\prime$ be the unique
subfield of $K$ of degree $(p-1)/4$. Let $\sB$ be
the unique prime ideal of $K^\prime$ above $p$.
Suppose that there are no
elliptic curves $E/K^\prime$ with non-trivial $2$-torsion and
conductors $2 \sB$, $2 \sB^2$. Then there is an
ineffective constant $C_p$ (depending only on $p$)
such that for all primes 
$\ell$, $m \ge C_p$, the only primitive solutions to
\eqref{eqn:main}
are the trivial ones 
$(x,y,z)=(\pm 1, 0, 1)$ and $(0, \pm 1, 1)$.
\end{thm}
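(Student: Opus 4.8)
The plan is to run the asymptotic version of the modularity--level-lowering--elimination strategy behind Theorem~\ref{thm:1}, with every case-by-case check replaced by a statement valid for all sufficiently large $\ell$, $m$, so that the finitely many genuinely arithmetic obstructions become exactly the elliptic curves of small conductor forbidden by the hypothesis. First I would take a putative non-trivial primitive solution $(x,y,z)$ of \eqref{eqn:main}, run the descent of Section~\ref{sec:Descent} and the constructions of Sections~\ref{sec:FreyCurve} and~\ref{sec:FreyCurve2}, obtaining one of finitely many possible Frey elliptic curves: over $K=\Q(\zeta_p)^+$, semistable with full $2$-torsion, in general; or, when $p\equiv 1\pmod{4}$ and a further descent is available, over $K'$, with a $K'$-rational $2$-torsion point and additive reduction permitted only at $\sB$. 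In either case the conductor of the Frey curve $E$ is the product of a part supported on $2$ and on $\fp$ (resp.\ $\sB$), whose exponents are bounded independently of the solution, and a part supported on the primes of multiplicative reduction away from $2$ and $\fp$ (resp.\ $\sB$), at which the minimal discriminant valuation is divisible by $\ell$ or by $m$ because of the $\ell$-th and $m$-th power structure produced by the descent.

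Next I would show, for $\ell$ larger than a bound depending only on $p$, that $\overline{\rho}_{E,\ell}$ is irreducible and that $E$ is modular. For irreducibility: were $\overline{\rho}_{E,\ell}$ reducible, semistability of $E$ and the class field theory argument of Section~\ref{sec:Irred} would force $E$, or some $\ell$-isogenous curve, to acquire a nontrivial $K$-rational (resp.\ $K'$-rational) point of order $\ell$, which bounds $\ell$ since the degree of the ground field is fixed (uniform boundedness of torsion). For modularity: over $K=\Q(\zeta_p)^+$ of possibly large degree we cannot in general invoke Theorem~\ref{thm:modularity}, whose hypotheses (b) and (c) are open --- hypothesis (b) is essentially Weber's class number problem --- so instead, once $\overline{\rho}_{E,\ell}$ is irreducible, I would obtain modularity of $E$ by the mechanism used to prove Theorem~\ref{thm:modularity}: switching among the mod $3$, mod $5$ and mod $7$ representations of $E$ and invoking \citet{SkinnerWiles} in the residually reducible case, \citet{Thorne} in the residually dihedral case, and the automorphy lifting theorems of \citet{Kisin} and others over totally real fields in the big-image case. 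The arithmetic hypotheses on the ground field in Theorem~\ref{thm:modularity} were needed only for the finitely many genuinely small $\ell$, hence are not needed here; I expect the ineffectivity of $C_p$ to enter exactly in this circle of ideas, through the exclusion of the Frey curve from finitely many exceptional isogeny configurations, for which one relies on finiteness theorems for $K$-rational points on certain modular curves that are not effective.

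I would then apply level lowering over totally real fields (Fujiwara, Jarvis, Rajaei) to the modular irreducible representation $\overline{\rho}_{E,\ell}$: the primes of multiplicative reduction away from $2$ and $\fp$ leave the level, since $\ell$ divides the discriminant valuation there, so that $\overline{\rho}_{E,\ell}\cong\overline{\rho}_{\ff,\lambda}$ for a Hilbert newform $\ff$ over $K$ (resp.\ $K'$) of parallel weight $2$ and level $2\OO_K$ or $2\fp$ (resp.\ $2\sB$ or $2\sB^2$). These spaces of newforms are finite-dimensional and independent of the solution. For each $\ff$ whose eigenvalue field strictly contains $\Q$, picking a prime $\fq$ with $a_\fq(\ff)\notin\Z$ and comparing $\tr\overline{\rho}_{E,\ell}(\mathrm{Frob}_\fq)=a_\fq(E)\in\Z$ --- which for suitable $\fq$ lies in an explicit finite set --- with $a_\fq(\ff)$ produces a nonzero divisibility bounding $\ell$; as such $\ff$ are finite in number, this contributes only an effective constraint to $C_p$. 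For each $\ff$ with rational eigenvalues, $\ff$ corresponds to an elliptic curve $E_\ff$ over $K$ (resp.\ $K'$) of conductor $2\OO_K$ or $2\fp$ (resp.\ $2\sB$ or $2\sB^2$); using the reduction type of the Frey curve above $2$ together with the congruences $a_\fq(E_\ff)\equiv a_\fq(E)\pmod{\ell}$ at auxiliary primes, I would argue that $E_\ff$, possibly after an isogeny, carries the stated $2$-torsion, so that its existence contradicts the hypothesis. Hence no non-trivial primitive solution survives once $\ell,m\ge C_p$. (For small $p$ the fields can collapse to $\Q$ --- $K$ for $p=3$, $K'$ for $p=5$ --- where modularity is classical; the substance is in large $p$.)

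The step I expect to be the main obstacle is modularity of the Frey curve for general $p$ without the hypotheses of Theorem~\ref{thm:modularity}: one must extract modularity purely from the largeness of $\overline{\rho}_{E,\ell}(G_K)$ by residual modularity and automorphy lifting over totally real fields, and must verify that the switch among the mod $3$, $5$ and $7$ representations genuinely closes --- in particular that the Frey curve is not one of the finitely many (but only ineffectively controlled) exceptional curves for which none of these auxiliary representations is residually reducible, residually dihedral, or of sufficiently large image. A secondary difficulty is the last step: since we directly control $\overline{\rho}_{E,\ell}$ but not $E_\ff[2]$, deducing that $E_\ff$ inherits the $2$-torsion of the Frey curve requires a careful Galois-module comparison at the primes above $2$, exploiting that the Frey curve is semistable (resp.\ has controlled reduction) there.
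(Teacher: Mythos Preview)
Your overall architecture---Frey curve, modularity, irreducibility, level lowering, then elimination of eigenforms via rationality of Hecke fields and transfer of $2$-torsion---matches the paper. But you have misdiagnosed where the hard step and the ineffectivity lie, and this leads you to propose a much more elaborate modularity argument than is needed.

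For modularity the paper does not re-run the Skinner--Wiles/Thorne/Kisin switch at mod~$3$, $5$, $7$. It simply quotes the theorem of \citet*{FLS} that over any totally real field $K$ there are only finitely many non-modular $j$-invariants, and then observes (Lemma~\ref{lem:modularity}) that the Frey curve has $\ord_\fq(j(E))=-(4\ell n-4)$ at any $\fq\mid 2$, which tends to $-\infty$ as $\ell$ grows. Thus for $\ell$ larger than an ineffective bound depending only on $K$ (hence only on $p$), the Frey curve's $j$-invariant misses the finite exceptional list, and $E$ is modular. The ineffectivity of $C_p$ comes entirely from Faltings' theorem inside the FLS result; there is no obstacle to ``closing the switch'' because one never has to close it explicitly. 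Your identification of modularity as the main difficulty is therefore misplaced.

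For irreducibility your proposed route through the argument of Section~\ref{sec:Irred} (Lemma~\ref{lem:tors}) does not generalize: that lemma needs the narrow class number of $K$ to be $1$, which is unknown for general $p$ (this is precisely the Weber-type problem you flagged in connection with Theorem~\ref{thm:modularity}). The paper instead invokes Theorem~\ref{thm:irred}, a general result of \citet{FSirred} building on Merel, which gives an \emph{effective} constant $C_K$ such that $\overline{\rho}_{E,\ell}$ is irreducible for all $\ell>C_K$ whenever $E$ is semistable at the primes above $\ell$. No class number hypothesis is needed. The remaining steps---restricting to rational Hecke fields, passing from rational eigenforms to elliptic curves via non-square-full level, and forcing full (resp.\ non-trivial) $2$-torsion on the target curve after enlarging $C_p$---are exactly as you describe.
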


\medskip

The computations described in this paper were carried out using the 
computer algebra \texttt{Magma} \citep{Magma}. The code and output
is available from: \newline
\url{http://homepages.warwick.ac.uk/~maseap/progs/diophantine/}

\medskip

We are grateful to the three referees for careful reading of
the paper and for suggesting many improvements. We are
indebted to 
Lassina Demb\'{e}l\'{e},
Steve Donnelly, 
Marc Masdeu and Jack Thorne
for stimulating conversations.

\section{Proof of Theorem~\ref{thm:modularity} and Corollary~\ref{cor:modularity}}
\label{sec:modularity}
We shall need a result from class field theory. The following
version is proved by \citet[Appendice A]{KrausNF}.
\begin{prop}\label{prop:Krauscf}
Let $K$ be a number field, and $q$ a rational
prime that does not ramify in $K$. 
Denote the mod $q$ cyclotomic character by
$\chi_q \; : \; G_K \rightarrow \F_q^\times$.
Write $S_q$ for 
the set of primes $\fq$ of $K$ above $q$, and let $S$
be a subset of $S_q$. Let 
$\varphi \; : \; G_K \rightarrow \F_q^\times$ be a character satisfying: 
\begin{itemize}
\item[(a)] $\varphi$ is unramified outside $S$ and the infinite places of $K$;
\item[(b)] $\varphi \vert_{I_\fq} = \chi_q \vert_{I_\fq}$
for all $\fq \in S$; here $I_\fq$ denotes the inertia
subgroup of $G_K$ at $\fq$. 
\end{itemize}
Let $u \in \OO_K$
be a unit that is positive in each real embedding of $K$.
Then 
\[
\prod_{\fq \in S} \norm_{\F_{\fq}/\F_q} (u \bmod \fq)=\overline{1}.
\]
\end{prop}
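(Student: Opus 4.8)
The plan is to reinterpret $\varphi$ as an idelic character and evaluate it on the principal idele attached to $u$. By global class field theory $\varphi$ corresponds to a continuous homomorphism $\mathbb{A}_K^\times/K^\times \to \F_q^\times$, which I again denote $\varphi$, with local components $\varphi_v \colon K_v^\times \to \F_q^\times$; by hypothesis (a), $\varphi_v$ is unramified, i.e.\ trivial on $\OO_v^\times$, for every finite place $v \notin S$. Let $\iota(u)=(u,u,\dots)\in\mathbb{A}_K^\times$ be the diagonal idele of $u\in K^\times$. Since $K^\times$ lies in the kernel of the global reciprocity map, $\prod_v \varphi_v(u)=1$, the product running over all places of $K$; the whole proof is an analysis of the individual factors.

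Next I would kill all but finitely many factors. For a finite place $v\notin S$ — this includes every $v\in S_q\setminus S$, as $S\subseteq S_q$ — the character $\varphi_v$ is trivial on $\OO_v^\times$, and $u$, being a global unit, lies in $\OO_v^\times$; hence $\varphi_v(u)=1$. For an archimedean place $v$, the connected component of $K_v^\times$ is $\R_{>0}$ when $v$ is real and all of $\C^\times$ when $v$ is complex, and every finite‑order character vanishes on it; since $u$ is positive in each real embedding, $u$ lies in this connected component, so $\varphi_v(u)=1$. This is the only point at which total positivity of $u$ is used. What survives is $\prod_{\fq\in S}\varphi_\fq(u)=1$.

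It then remains to identify $\varphi_\fq(u)$ for $\fq\in S$. Under local class field theory the restriction of $\varphi_\fq$ to $\OO_\fq^\times$ corresponds to the restriction of $\varphi$ to the inertia group $I_\fq$, so hypothesis (b) gives $\varphi_\fq|_{\OO_\fq^\times}=\chi_{q,\fq}|_{\OO_\fq^\times}$, whence $\varphi_\fq(u)=\chi_{q,\fq}(u)$ because $u\in\OO_\fq^\times$. Now I compute the local cyclotomic character: since $q$ is unramified in $K$, $K_\fq/\Q_q$ is unramified, and by functoriality of the reciprocity map the restriction of the cyclotomic character of $\Q_q$ to $G_{K_\fq}$ is obtained by precomposing with $\norm_{K_\fq/\Q_q}$. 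Feeding in $u\in\OO_\fq^\times$, using the classical description of the cyclotomic character of $\Q_q$ on $\Z_q^\times$, and using that for the unramified extension $K_\fq/\Q_q$ the norm on $\OO_{K_\fq}$ reduces modulo $q$ to $\norm_{\F_\fq/\F_q}$ on residue fields, one gets $\chi_{q,\fq}(u)=\norm_{\F_\fq/\F_q}(u\bmod\fq)$, possibly up to inversion — the inversion depending only on the normalisation of the reciprocity map and being harmless, since we are proving that a product equals $\overline{1}$. Substituting back yields $\prod_{\fq\in S}\norm_{\F_\fq/\F_q}(u\bmod\fq)=\overline{1}$.

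The main obstacle is the bookkeeping in the last step: correctly identifying the local cyclotomic character over the unramified extension $K_\fq/\Q_q$ and reconciling the normalisations of the global and local reciprocity maps (arithmetic versus geometric Frobenius). Everything else reduces to two clean inputs — that $K^\times$ is the kernel of global reciprocity, and that total positivity forces the archimedean components to be trivial — so no genuine difficulty is expected there.
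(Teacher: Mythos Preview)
Your proposal is correct and follows essentially the same route as the paper: both use global reciprocity (the product formula) to reduce to a place-by-place analysis, dispose of the archimedean places via total positivity and the finite places outside $S$ via unramifiedness, and then identify the local contribution at $\fq\in S$ with the residue norm via the explicit description of the local cyclotomic character. The only cosmetic difference is that the paper works with the local Artin maps $\Theta_\upsilon\colon K_\upsilon^\times\to\Gal(L/K)$ into the finite quotient cut out by $\varphi$, whereas you phrase everything idelically; your remark about the harmless sign ambiguity corresponds to the paper's citation of Kraus for the explicit formula $\chi_q(\Theta_\fq(u))=\norm_{\F_\fq/\F_q}(u\bmod\fq)^{-1}$.
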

\begin{proof}
For the reader's convenience we give a sketch of Kraus's elegant
argument.
Let $L$ be the cyclic field extension of $K$ cut out by the kernel of $\varphi$.
Then we may view $\varphi$ as a character $\Gal(L/K) \rightarrow \F_q^\times$.
Write $M_K$ for the places of $K$. 
For  $\upsilon \in M_K$,
let $\Theta_\upsilon : K_\upsilon^* \rightarrow \Gal(L/K)$
be the local Artin map. Let $u \in \OO_K$ be a unit that is positive in each
real embedding. 
We shall consider the values $\varphi(\Theta_\upsilon(u)) \in \F_q^\times$ as $\upsilon$
ranges over $M_K$.

Suppose first that $\upsilon \in M_K$ is infinite. If $\upsilon$ is complex
then $\Theta_\upsilon$ is trivial and so certainly 
$\varphi(\Theta_\upsilon(u))=\overline{1}$ in $\F_q^\times$. So suppose $\upsilon$ is real.
As $u$ is positive in $K_\upsilon$, it is a local norm
and hence in the kernel of $\Theta_\upsilon$. 
Therefore $\varphi(\Theta_\upsilon(u))=\overline{1}$.

Suppose next that $\upsilon \in M_K$ is finite.  
As $u \in \OO_\upsilon^\times$, it follows from local reciprocity
that $\Theta_\upsilon(u)$ belongs to the inertia subgroup $I_\upsilon \subseteq \Gal(L/K)$.
If $\upsilon \notin S$ then $\varphi(I_\upsilon)=1$ by (a)
and so $\varphi(\Theta_\upsilon(u))=\overline{1}$. 
Thus suppose that $\upsilon=\fq \in S$. It follows from (b) that
$\varphi(\Theta_\fq(u))=\chi_q(\Theta_\fq(u))$. Through an explicit
calculation, \citet[Appendice A, Proposition 1]{KrausNF} shows that
$\chi_q(\Theta_\fq(u))= \norm_{\F_\fq/\F_q}(u \bmod{\fq})^{-1}$.

Finally, by global reciprocity, $\prod_{\upsilon \in M_K} \Theta_\upsilon(u)=1$.
Applying $\varphi$ to this equality completes the proof.
\end{proof}

We shall also make use of the following theorem of 
\citet[Theorem 1.1]{Thorne}. 
\begin{thm}[Thorne]
Let $E$ be an elliptic curve
over a totally real field $K$. Suppose $5$
is not a square in $K$ and that $E$ has no
$5$-isogenies defined over $K$. Then $E$ is modular.
\end{thm}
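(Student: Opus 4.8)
The plan is to show that the mod-$5$ representation $\bar\rho_{E,5}\colon G_K\to\GL_2(\F_5)$ is modular, and then to deduce modularity of $E$ itself by a modularity lifting theorem at the prime $5$. First I would record the two basic consequences of the hypotheses. Since $\Q(\sqrt5)$ is the quadratic subfield of $\Q(\zeta_5)$, the assumption that $5$ is not a square in $K$ gives $\Q(\sqrt5)\nsubseteq K$, hence $[K(\zeta_5):K]=4$ and the mod-$5$ cyclotomic character $\chi_5=\det\bar\rho_{E,5}$ is surjective onto $\F_5^\times$; and the assumption that $E$ has no $K$-rational $5$-isogeny says precisely that $\bar\rho_{E,5}$ is irreducible. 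By the classification of irreducible subgroups of $\GL_2(\F_5)$, the projective image of $\bar\rho_{E,5}$ is then one of: (i) all of $\mathrm{PSL}_2(\F_5)$ or $\mathrm{PGL}_2(\F_5)$, equivalently the image of $\bar\rho_{E,5}$ contains $\SL_2(\F_5)$ (the ``large image'' case); (ii) contained in the normaliser of a Cartan subgroup, so that $\bar\rho_{E,5}\cong\mathrm{Ind}_{G_M}^{G_K}\psi$ for a quadratic extension $M/K$ and a character $\psi$ of $G_M$ (the ``dihedral'' case); or (iii) exceptional, of type $A_4$ or $S_4$.

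To establish residual modularity in the non-dihedral cases (i) and (iii) I would use the classical $3$--$5$ switch. The twist of the modular curve $X(5)$ parametrising elliptic curves $E'/K$ equipped with a symplectic isomorphism $E'[5]\cong E[5]$ of $G_K$-modules has genus zero and carries the $K$-rational point given by $E$ itself, hence is $K$-isomorphic to $\mathbb{P}^1$; as $K$ is Hilbertian, I may choose $E'$ in this family with $\bar\rho_{E',3}$ surjective onto $\GL_2(\F_3)$. Since every irreducible subgroup of $\GL_2(\F_3)$ has solvable projective image, $\bar\rho_{E',3}$ is modular by Langlands--Tunnell together with cyclic base change, and the modularity lifting theorems at the prime $3$ of \citet{Kisin} and others --- applied to $\rho_{E',3}$, possibly after a solvable base change to arrange the local conditions at $3$, which is harmless by descent --- show that $E'$ is modular. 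Hence $\bar\rho_{E,5}\cong\bar\rho_{E',5}$ is modular. In the dihedral case (ii), instead, automorphic induction for the quadratic extension $M/K$, applied to a suitable lift of $\psi$ to an algebraic Hecke character of $M$, attaches a Hilbert modular eigenform over $K$ whose mod-$5$ representation is $\bar\rho_{E,5}$, so $\bar\rho_{E,5}$ is again modular.

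It remains to lift. In cases (i) and (iii) the image of $\bar\rho_{E,5}$ is adequate and, because $\chi_5$ is surjective, its restriction to $G_{K(\zeta_5)}$ stays absolutely irreducible, so a modularity lifting theorem at the prime $5$ promotes modularity of $\bar\rho_{E,5}$ to that of $\rho_{E,5}$, and $E$ is modular. Case (ii) is the crux: the residual image is dihedral, so it may fail to be adequate, and in the degenerate sub-case $M=K(\sqrt5)\subseteq K(\zeta_5)$ it is not even absolutely irreducible after restriction to $G_{K(\zeta_5)}$, so the usual large-image lifting theorems do not apply. This is exactly the situation addressed by the modularity lifting theorem of \citet{Thorne} for residually dihedral representations; I would apply it to $\rho_{E,5}$, checking its hypotheses --- conditions on the reduction type of $E$ at the primes above $5$ and at the primes of bad reduction, and a mild non-degeneracy condition on the pair $(M,\psi)$ --- to conclude that $E$ is modular.

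I expect the main obstacle to be the dihedral case: establishing the residually dihedral modularity lifting theorem itself, and then verifying that its hypotheses genuinely hold for an arbitrary elliptic curve $E$ with $\bar\rho_{E,5}$ dihedral, disposing of the residual edge cases if necessary by introducing an auxiliary prime or by a further solvable base change and descent. A secondary difficulty is carrying out the $3$--$5$ switch and the lifting step at $3$ over an arbitrary totally real field rather than over $\Q$, which is where cyclic base change and Moret-Bailly-type existence results for curves with prescribed local behaviour enter.
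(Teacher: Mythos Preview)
Your proposal is correct and matches the paper's own account of Thorne's argument: the paper does not give a self-contained proof but simply records that Thorne deduces the theorem by combining his modularity lifting theorem for residually dihedral representations with \citep*[Theorem 3]{FLS}, the latter being a consequence of the big-image modularity lifting theorems of Kisin and others. Your case split --- large image via the $3$--$5$ switch and Kisin-type lifting, dihedral via Thorne's dedicated lifting theorem --- is exactly this decomposition, and you have correctly identified the degenerate dihedral sub-case $M\subseteq K(\zeta_5)$ as the genuine obstruction that Thorne's theorem is designed to overcome.
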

Thorne deduces this result by combining his beautiful modularity 
theorem for residually dihedral representations \citep[Theorem 1.2]{Thorne},
with \citep*[Theorem 3]{FLS}. The latter result is essentially
a straightforward consequence of the powerful modularity lifting theorems
for residual representations with \lq big image\rq\
due to \citet{Kisin}, \citet*{BGG1,BGG2}, and \citet{BD}.

Finally we shall need the following modularity theorem 
for residually reducible representations due to
\citet[Theorem A]{SkinnerWiles}.
\begin{thm}[Skinner and Wiles]
Let $K$ be a real abelian number field. Let $q$ be
an odd prime, and
\[
\rho : G_K \rightarrow \GL_2(\overline{\Q}_q)
\]
be a continuous, irreducible representation,
unramified away from a finite number of places of $K$.
Suppose $\overline{\rho}$ is reducible and write
$\overline{\rho}^{\mathrm{ss}} = \psi_1 \oplus \psi_2$. Suppose further that
 \begin{enumerate}
   \item[(i)] the splitting field $K(\psi_1 / \psi_2)$ of $\psi_1 / \psi_2$
is abelian over $\Q$;
   \item[(ii)] $(\psi_1 / \psi_2)(\tau) = -1$ for each complex 
conjugation $\tau$;
    \item[(iii)]
$(\psi_1 / \psi_2)\vert_{D_\fq}\neq 1$ for each $\fq \mid q$;
   \item[(iv)] for all $\fq \mid q$,
\[
\rho \vert_{D_\fq} \sim 
\begin{pmatrix}
\phi_1^{(\fq)} \cdot \tilde{\psi}_1 & * \\
0 & \phi_2^{(\fq)} \cdot  \tilde{\psi}_2\\
\end{pmatrix}
\]
with $\phi_2^{(\fq)}$ factoring through a pro-$q$ extension 
of $K_\fq$ and $\phi_2^{(\fq)} \vert_{I_\fq}$
having finite order, and 
where $\tilde{\psi}_i$ is a Teichm\"{u}ller lift of $\psi_i$;
   \item[(v)] $\Det(\rho) = \psi \chi_q^{k-1}$, where $\psi$ is
a character of finite order, and $k \ge 2$ is an integer.
 \end{enumerate}
Then the representation $\rho$ is associated to a Hilbert modular newform.
\end{thm}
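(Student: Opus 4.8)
The plan is to recast the assertion as an $R=\mathbb{T}$ theorem and to prove it by a Taylor--Wiles--Kisin patching argument adapted to the residually reducible case. First I would produce the automorphic starting point. Since $(\psi_1/\psi_2)(\tau)=-1$ for every complex conjugation $\tau$, the pair $(\psi_1,\psi_2)$ is odd, so there is a Hilbert--Eisenstein series over $K$ whose associated mod $q$ semisimple Galois representation is $\overline{\rho}^{\mathrm{ss}}=\psi_1\oplus\psi_2$; its Hecke eigensystem therefore lies in the maximal ideal $\mathfrak{m}$ of the relevant (nearly ordinary, fixed weight and determinant) Hilbert Hecke algebra $\mathbb{T}$ cut out by $\overline{\rho}$. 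Hypothesis (i), that $K(\psi_1/\psi_2)$ is abelian over $\mathbb{Q}$, lets me control the conductor and nebentypus of this Eisenstein series by class field theory, very much in the spirit of Proposition~\ref{prop:Krauscf}, and, after a solvable base change if necessary, reduce to a situation with tame, explicitly understood ramification away from $q$. Hypothesis (iv) dictates that I work throughout in the nearly ordinary deformation category at the primes $\fq\mid q$, and (v) says to fix the determinant to $\psi\chi_q^{k-1}$.

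The first genuine difficulty is on the Galois side. Because $\overline{\rho}$ is reducible, $\mathrm{End}_{G_K}(\overline{\rho}^{\mathrm{ss}})$ is two-dimensional, the deformation functor is not representable in the naive way, and --- worse --- the Galois-cohomological input to Taylor--Wiles (vanishing of the dual Selmer group, or the numerical coincidence relating $H^1_{\mathcal{L}}$ and $H^1_{\mathcal{L}^\perp}$) breaks down. I would deal with representability by fixing the reducible filtration, i.e.\ deforming the pair (representation, invariant line), equivalently restricting to deformations that are upper triangular at a chosen place and rigidifying with the local conditions (iii) and (iv); here (iii), namely $(\psi_1/\psi_2)\vert_{D_\fq}\neq 1$, is precisely what makes the ordinary filtration at $\fq$ canonical and the local nearly ordinary deformation ring well-behaved. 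This produces a universal framed, nearly ordinary, fixed-determinant deformation ring $R$ with a canonical surjection $R\twoheadrightarrow\mathbb{T}_{\mathfrak{m}}$ realizing the Eisenstein-congruent cusp forms.

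The heart of the matter --- and the step I expect to be the main obstacle --- is the patching. One must choose Taylor--Wiles sets of primes $Q_n$ so that the patched objects are large enough to force $R_\infty=\mathbb{T}_\infty$, but in the irreducible case this choice relies on Chebotarev applied to the adjoint of $\overline{\rho}$ together with largeness of its image, both of which fail here. I would instead (a) first adjoin a single auxiliary rigidifying prime $v$, chosen by Chebotarev so that $\psi_1/\psi_2$ is nontrivial on the Frobenius at $v$ and the local deformation ring at $v$ is formally smooth, which separates the two Jordan--H\"older constituents enough for the relevant Selmer groups to behave; and (b) select the Taylor--Wiles primes $Q_n$ relative to the character $\psi_1/\psi_2$ rather than the full adjoint, using that $\psi_1/\psi_2$ has nontrivial image (a consequence of (ii) and (iii)) to guarantee infinitely many admissible $Q_n$. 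The patched ring $R_\infty$ then has the expected relative dimension, the patched Hecke module is free over it by the usual depth argument, and specialization recovers $R\xrightarrow{\sim}\mathbb{T}_{\mathfrak{m}}$. Finally, since $\rho$ is continuous, irreducible, and satisfies (iii), (iv), (v), it defines a $\overline{\mathbb{Q}}_q$-point of $R$, hence of $\mathbb{T}_{\mathfrak{m}}$, and this point corresponds to a Hilbert modular newform --- the assertion. The places where a naive transcription of Wiles's argument would fail, and where Skinner and Wiles's new ideas really enter, are exactly the rigidifying-prime construction in (b) and the verification that the patched module has the correct depth despite the Eisenstein origin of $\mathfrak{m}$; this is the part I expect to fight hardest with.
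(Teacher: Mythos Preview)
The paper does not prove this theorem at all: it is quoted verbatim as \citep[Theorem A]{SkinnerWiles} and used as a black box in the proof of Theorem~\ref{thm:modularity}. There is therefore no ``paper's own proof'' to compare your proposal against.

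Your sketch is a reasonable high-level outline of the architecture of the original Skinner--Wiles argument (nearly ordinary deformations, Eisenstein congruences, patching adapted to the residually reducible case), and it correctly identifies hypotheses (iii) and (iv) as what makes the ordinary filtration canonical and the local deformation problem well-posed. But two cautions. First, the actual Skinner--Wiles proof is substantially more intricate than a single patching argument with an auxiliary rigidifying prime; in particular their treatment of the reducible locus involves a careful analysis of pseudo-representations and of the ideal of reducibility in the deformation ring, together with a delicate commutative-algebra argument comparing $R$ and $\mathbb{T}$ that is not simply the usual freeness-plus-depth step. Your item (b), choosing Taylor--Wiles primes ``relative to $\psi_1/\psi_2$ rather than the full adjoint'', glosses over the genuine difficulty: the adjoint decomposes and the diagonal pieces contribute Selmer classes that are not killed by any choice of auxiliary primes, which is exactly why Skinner--Wiles cannot run a naive patching argument and must instead work with pseudo-deformations and control the Eisenstein ideal directly. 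Second, for the purposes of this paper none of this is needed: the theorem is invoked, not proved, so if you are writing up the paper's argument you should simply cite it and move on.
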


\subsection*{Proof of Theorem~\ref{thm:modularity}}
As $5$ is unramified in $K$, it certainly is not a square in $K$.
If $E$ has no $5$-isogenies defined over $K$ then the result follows
from Thorne's theorem. We may thus suppose that the mod $5$
representation $\overline{\rho}$ of $E$ is reducible,
and write $\overline{\rho}^{\mathrm{ss}}=\psi_1 \oplus \psi_2$.
We will verify hypotheses (i)--(v) in the theorem of Skinner and Wiles 
(with $q=5$)
to deduce the modularity
of $\rho : G_K \rightarrow \Aut(T_5(E)) \cong \GL_2(\Z_5)$,
where $T_5(E)$ is the $5$-adic Tate module of $E$.
If $E$ has good supersingular reduction at some
$\fq \mid 5$ then (as $\fq$ is unramified) $\overline{\rho} \vert_{I_\fq}$
is irreducible \citep[Proposition 12]{Serre72}, contradicting the reducibility of $\overline{\rho}$. It follows that $E$ has good ordinary
or multiplicative reduction at all $\fq \mid 5$. In particular,
hypothesis (iv) holds with $\phi_i^{(\fq)}=1$.

Now $\psi_1 \psi_2 = \Det(\rho)=\chi_5$ so hypothesis (v) holds
with $\psi=1$ and $k=2$.
Moreover, for each complex conjugation $\tau$,
we have $(\psi_1/\psi_2)(\tau)=\psi_1(\tau) \psi_2(\tau^{-1})
=\psi_1(\tau)\psi_2(\tau)=\chi_5(\tau)=-1$ so (ii)
is satisfied.
It follows from the fact that $E$ has good ordinary 
or multiplicative at all $\fq \mid 5$, that 
$(\overline{\rho} \vert_{I_\fq})^{\mathrm{ss}} =  
\chi_5 \vert_{I_\fq} \oplus 1$ 
and so $\psi_1/\psi_2$ is non-trivial when restricted to $I_\fq$
(again as $\fq$ is unramified in $K$); this proves (iii).

It remains to verify (i). Note that $\psi_1/\psi_2=\chi_5/ \psi_2^2$.
Hence $K(\psi_1/\psi_2)$ is contained in the compositum
of the fields $K(\zeta_5)$ and $K(\psi_2^2)$,
and by symmetry also contained in the compositum of
the fields $K(\zeta_5)$ and $K(\psi_1^2)$.
It is sufficient to show that either 
$K(\psi_2^2)=K$ or $K(\psi_1^2)=K$.
Note that $\psi_i^2 \; : \; G_K \rightarrow \F_5^\times$ are quadratic characters
that are unramified at all 
archimedean places. We will show that one of them
is everywhere unramified, and then the desired result
follows from the assumption that the class number of $K$ 
is odd.
First note, by the semistability of $E$, that $\psi_1$
and $\psi_2$ are unramified at all finite primes $\fp \nmid 5$.
Let $S$ be the subset of $\fq \in S_5$ such that $\psi_1$
is unramified at $\fq$. By the above, we know that
these are precisely the $\fq \in S_5$ such that
$\psi_2 \vert_{I_\fq}=\chi_5 \vert_{I_\fq}$.
By assumption (c) and Proposition~\ref{prop:Krauscf},
we have that either $S=\emptyset$ or $S=S_5$.
If $S=\emptyset$ then $\psi_2$ is unramified
at all $\fq \mid 5$, and if $S=S_5$
then $\psi_1$ is unramified at all $\fq \mid 5$.
This completes the proof.

\subsection*{Proof of Corollary~\ref{cor:modularity}}
Suppose first that $K=\Q(\zeta_n)^+$. 
If $n \equiv 2 \pmod{4}$ then $\Q(\zeta_n)=\Q(\zeta_{n/2})$,
so we adopt the
usual convention of supposing that $n \not \equiv 2 \pmod{4}$.
We consider values $n<100$ and impose the restriction $5 \nmid n$,
which ensures that condition (a) of Theorem~\ref{thm:modularity}
is satisfied.  It is known \citep{Miller} that the
class number $h_n^+$ of $K$ is $1$ for all $n < 100$. Thus condition (b)
is also satisfied. Write $E_n^+$ for the group of 
units of $K$ and $C_n^+$ for the subgroup of cyclotomic
units. A result of \citet{Sinnott} asserts that 
$[E_n^+:C_n^+]=b \cdot h_n^+$ where $b$ is an explicit constant 
that happens to be $1$ for $n$ with at most $3$ distinct prime
divisors, and so certainly for all $n$ is our range. It follows that 
$E_n^+=C_n^+$ for $n < 100$. Now let $S_5$ be as in the statement
of Theorem~\ref{thm:modularity}. We wrote a simple \texttt{Magma}
script which for each $n <100$ satisfying $5 \nmid n$
and $n \not \equiv 2 \pmod{4}$ writes down a basis for 
the cyclotomic units $C_n^+$ and deduces a basis for 
the totally positive units. It then checks, for every
non-empty proper subset of $S_5$, if there is an element
$u$ of this basis of totally positive units that satisfies
\eqref{eqn:conditionc}. We found this to be the case for
all $n$ under consideration except $n=29$, $87$ and $89$.
The corollary follows from Theorem~\ref{thm:modularity}
for $K=\Q(\zeta_n)^+$ with 
$n$ as in the statement of the corollary.

Now let $K$ be a real abelian field with conductor $n$
as in the statement of the corollary. Then 
$K \subseteq \Q(\zeta_n)^+$. 
As $\Q(\zeta_n)^+/K$
is cyclic, modularity of an elliptic curve $E/K$
follows, by Langlands' cyclic base change theorem
\citep{Langlands}, from
modularity of $E$ over $\Q(\zeta_n)^+$,  completing the proof
of the corollary.

\section{Cyclotomic Preliminaries}
Throughout $p$ will be an odd prime. 
Let $\zeta$ be a primitive $p$-th root of unity, 
and
$K=\Q(\zeta+\zeta^{-1})$ the maximal real subfield of $\Q(\zeta)$.
We write
\[
\theta_j=\zeta^j+\zeta^{-j} \in K, \qquad j=1,\dotsc,(p-1)/2 \, .
\]
Let $\OO_K$ be the ring of integers of $K$. Let $\fp$ 
be the unique prime ideal of $K$ above $p$. Then $p \OO_K=\fp^{(p-1)/2}$.
\begin{lem}\label{lem:cyc}
For $j=1,\dotsc,(p-1)/2$, we have
\[
\theta_j \in \OO_K^\times, \qquad \theta_j+2 \in \OO_K^\times,
\qquad
(\theta_j-2) \OO_K=\fp.
\]
Moreover, 
$(\theta_j-\theta_k) \OO_K=\fp$
for
$1 \le j< k \le (p-1)/2$.
\end{lem}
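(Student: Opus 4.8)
The plan is to work with the factorization of cyclotomic polynomials and the explicit decomposition $p\OO_K = \fp^{(p-1)/2}$. First I would handle $\theta_j - 2$. Since $\theta_j - 2 = \zeta^j + \zeta^{-j} - 2 = -\zeta^{-j}(1-\zeta^j)^2$, and $\zeta^{-j}$ is a unit, the ideal $(\theta_j-2)\OO_K$ equals $(1-\zeta^j)^2 \OO_K \cap K$. Recall that in $\Z[\zeta]$ one has $(1-\zeta^j)\Z[\zeta] = (1-\zeta)\Z[\zeta]$ for $p \nmid j$, and $(1-\zeta)^{p-1}\Z[\zeta] = p\Z[\zeta]$, with $(1-\zeta)\Z[\zeta]$ the unique prime above $p$ in $\Z[\zeta]$. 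Intersecting with $\OO_K$: since $\fp$ is the unique prime of $K$ above $p$ and $p\OO_K = \fp^{(p-1)/2}$, while $\Q(\zeta)/K$ is ramified of degree $2$ at $\fp$, the prime $(1-\zeta)\Z[\zeta]$ lies above $\fp$ with ramification index $2$. Hence $(1-\zeta^j)^2\Z[\zeta]$ contracts to $\fp$, giving $(\theta_j - 2)\OO_K = \fp$ as claimed.

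Next, for $\theta_j \in \OO_K^\times$: it suffices to show $\theta_j$ is a unit in $\OO_K[\tfrac12]$ already forces it, but more directly, $\theta_j = \zeta^j + \zeta^{-j} = \zeta^{-j}(1+\zeta^{2j})$. Since $p$ is odd, $2j$ runs over nonzero residues mod $p$ as well, so $1 + \zeta^{2j} = \Phi_{?}$-type factor; concretely $\prod_{\text{suitable }a}(1+\zeta^a)$ relates to $\Phi_p(-1) = 1$ (as $p$ is odd), which shows these are units — or one simply notes $N_{\Q(\zeta)/\Q}(1+\zeta^{2j}) = \Phi_p(-1) = 1$. Similarly $\theta_j + 2 = \zeta^j + \zeta^{-j} + 2 = \zeta^{-j}(1+\zeta^j)^2$, and $N_{\Q(\zeta)/\Q}(1+\zeta^j) = \Phi_p(-1) = 1$, so $\theta_j + 2 \in \OO_K^\times$.

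Finally, for the difference $\theta_j - \theta_k$ with $1 \le j < k \le (p-1)/2$: write $\theta_j - \theta_k = (\zeta^j - \zeta^k) + (\zeta^{-j} - \zeta^{-k}) = (\zeta^j - \zeta^k)(1 - \zeta^{-j-k})$. Each factor $\zeta^a - \zeta^b$ with $a \not\equiv b \pmod p$ generates the same ideal as $1 - \zeta$ in $\Z[\zeta]$ (up to units), and one checks the exponents $j - k$, $j + k$ are all nonzero mod $p$ in the given range (here $j+k \le p-1 < p$ and $j + k \ge 3$, so $p \nmid (j+k)$; and $0 < k - j < p$). Thus $(\theta_j - \theta_k)\Z[\zeta] = (1-\zeta)^2 \Z[\zeta]$, which contracts to $\fp$ by the same ramification computation as above, giving $(\theta_j - \theta_k)\OO_K = \fp$.

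The main obstacle is bookkeeping the relationship between primes in $\Z[\zeta]$ and $\OO_K$ under the ramified quadratic extension $\Q(\zeta)/K$ — specifically confirming that $(1-\zeta)\Z[\zeta]$ has ramification index $2$ over $\fp$ so that the square of a uniformizer in $\Z[\zeta]$ contracts to a uniformizer in $\OO_K$. Everything else reduces to the evaluations $\Phi_p(1) = p$ and $\Phi_p(-1) = 1$ (valid since $p$ is odd) together with the standard fact that $1 - \zeta^a$ for $p \nmid a$ all generate the unique prime above $p$.
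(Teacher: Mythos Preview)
Your proof is correct and follows essentially the same strategy as the paper's: pass to $L=\Q(\zeta)$, factor each expression as a product of terms of the shape $1-\zeta^a$ (the paper uses the equivalent $\zeta^r-\zeta^{-r}$), and use that these all generate the unique prime $\mP$ above $p$, so that a product of two of them generates $\mP^2=\fp\OO_L$. One small slip: $\theta_j-2=\zeta^{-j}(1-\zeta^j)^2$ without the leading minus sign (harmless for the ideal), and where you factor $\theta_j-\theta_k=(\zeta^j-\zeta^k)(1-\zeta^{-j-k})$ the paper instead writes $\theta_j-\theta_k=(\zeta^u-\zeta^{-u})(\zeta^v-\zeta^{-v})$ with $u+v\equiv j$, $u-v\equiv k\pmod p$, which is the same idea in different clothing.
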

\begin{proof}
Observe that 
$\theta_j=(\zeta^{2j}-\zeta^{-2j})/(\zeta^j-\zeta^{-j})$
and thus belongs to the group of cyclotomic units.
Given $j$,
let $j \equiv 2 r \pmod{p}$. Then $\theta_j+2=\theta_r^2 \in \OO_K^\times$.

For now, let $L=\Q(\zeta)$. 
Let $\mP$ be the prime of $\OO_L$ above $\fp$.
Then $\fp \OO_L=\mP^2$. As is well-known, $\mP=(1-\zeta^u) \OO_L$
for $u=1,2,\dotsc,p-1$. Note that
$\theta_j-2=(\zeta^r-\zeta^{-r})^2$, with $j \equiv 2 r \pmod{p}$, from which we deduce that
$(\theta_j-2) \OO_L=\mP^2=\fp \OO_L$, hence $(\theta_j-2) \OO_K=\fp$.

For the final part, $j \not \equiv \pm k \pmod{p}$. Thus there exist 
$u$, $v \not \equiv 0 \pmod{p}$
such that
\[
u+v \equiv j, \qquad u-v \equiv k \pmod{p}.
\]
Then
\[
(\zeta^u-\zeta^{-u})(\zeta^v-\zeta^{-v})=\theta_j-\theta_k
\]
and so $(\theta_j-\theta_k)\OO_L=\mP^2=\fp \OO_L$. This completes the proof.
\end{proof}

\section{The Descent}\label{sec:Descent}
Now let $\ell$, $m \ge 5$ be prime, and let $(x,y,z)$
be a non-trivial, primitive solution to \eqref{eqn:main}.
If $\ell=p$, then \eqref{eqn:main} can be rewritten as
$z^p+(-x^2)^p=(y^m)^2$. \citet{DM} have shown
that the only primitive solutions to the generalized Fermat equation \eqref{eqn:FCgen}
with signature $(p,p,2)$ are the trivial ones, giving us a contradiction.
We shall henceforth suppose that $\ell \ne p$ and $m \ne p$.

Clearly $z$ is odd. 
By swapping in \eqref{eqn:main} 
the terms $x^\ell$ and $y^m$ if necessary,
we may suppose that $2 \mid x$.
We factor the left-hand side over $\Z[i]$. It follows from
our assumptions that the two
factors $(x^\ell+y^m i )$ and $(x^\ell-y^m i)$ are coprime.
There exist coprime rational integers $a$, $b$ such that
\[
x^\ell+ y^m i=(a+bi)^p \, , \qquad z=a^2+b^2.
\]
Then
\begin{align*}
x^\ell &= \frac{1}{2} \left( (a+bi)^p+(a-bi)^p \right)\\
&= a \cdot \prod_{j=1}^{p-1} 
\left( (a+bi) +(a-bi) \zeta^j \right)\\
&= a \cdot \prod_{j=1}^{(p-1)/2}
\left(
(a+bi)+(a-bi) \zeta^j
\right) \cdot
\left(
(a+bi)+(a-bi) \zeta^{-j}
\right) \, .
\end{align*}
In the last step we have paired up the complex conjugate factors.
Multiplying out these pairs we obtain a factorization
of $x^\ell$ over $\OO_K$:
\begin{equation} \label{eqn:factor}
x^\ell= a \cdot \prod_{j=1}^{(p-1)/2}
\left(
(\theta_j+2) a^2+ (\theta_j-2) b^2
\right) \, .
\end{equation}
To ease notation, write
\begin{equation}\label{eqn:beta}
\beta_j= (\theta_j+2) a^2 + (\theta_j-2)b^2 \, , \qquad j=1,\dotsc,\frac{p-1}{2} \, .
\end{equation}

\begin{lem}\label{lem:factor}
Write $n=\ord_2(x) \ge 1$.  
\begin{enumerate}
\item[(i)] If $p \nmid x$ then
\[
a= 2^{\ell n} \alpha^\ell,
\qquad 
\beta_j \OO_K=\gb_j^\ell
\]
where $\alpha$ is a rational integer, and
$\alpha \OO_K,\gb_1,\dotsc,\gb_{(p-1)/2}$ are pairwise coprime ideals of $\OO_K$, all of which are coprime to $2p$.
\item[(ii)] If
$p \mid x$ then
\[
a = 2^{\ell n} p^{\kappa \ell  -1} \alpha^\ell,
\qquad \beta_j \OO_K=\fp \cdot \gb_j^\ell
\]
where $\kappa=\ord_p(x) \ge 1$, $\alpha$ is a rational integer and 
$\alpha \cdot \OO_K,\gb_1,\dotsc,\gb_{(p-1)/2}$ are pairwise coprime ideals of $\OO_K$, all of which are coprime to $2p$.
\end{enumerate}
\end{lem}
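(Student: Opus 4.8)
The plan is to perform a Kummer-style descent on the factorization \eqref{eqn:factor}. Working with ideals of $\OO_K$, I would show that $a\OO_K$ and the $\beta_j\OO_K$ are pairwise coprime away from $\fp$ and from the primes above $2$, pin down their exact behaviour at $2$ and at $\fp$, and then invoke unique factorization of ideals — crucially together with $\ell\ne p$ — to read off the $\ell$-th power structure. A convenient rewriting is $\beta_j=\theta_j z+2(a^2-b^2)$ with $z=a^2+b^2$, and the essential input is the arithmetic of $\theta_j+2$, $\theta_j-2$ and $\theta_j-\theta_k$ recorded in Lemma~\ref{lem:cyc}. I would begin at the prime $2$: reducing $(a+bi)^p=x^\ell+y^m i$ modulo $2$ in $\Z[i]$, and using that $x$ is even (so $y$ is odd), forces $a$ even and $b$ odd (the cases $a,b$ both odd or both even being excluded by $(1+i)^2=2i$ and by $\gcd(a,b)=1$). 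Since $\theta_j\in\OO_K^\times$ and $z$ is odd, $\beta_j\equiv\theta_j z\pmod{2\OO_K}$ is coprime to $2$; hence the whole $2$-part of $x^\ell=a\prod_j\beta_j$ lies in $a$, giving $\ord_2(a)=\ell n$.

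Next I would analyse $\fp$. By Lemma~\ref{lem:cyc}, $\theta_j\equiv 2\pmod\fp$, so $\beta_j\equiv 4a^2\pmod\fp$; thus $\fp\mid\beta_j$ for some (equivalently, every) $j$ precisely when $\fp\mid a$, i.e. $p\mid a$. If $p\nmid a$ then each $\beta_j$, and hence $x^\ell$, is a $\fp$-unit, so $p\nmid x$. If $p\mid a$ then $p\nmid b$, and since $\theta_j+2\in\OO_K^\times$ while $(\theta_j-2)\OO_K=\fp$ we get $\ord_\fp\big((\theta_j+2)a^2\big)=2\ord_\fp(a)\ge 2$ but $\ord_\fp\big((\theta_j-2)b^2\big)=1$, whence $\ord_\fp(\beta_j)=1$ for every $j$; comparing $\fp$-valuations in \eqref{eqn:factor} and using $p\OO_K=\fp^{(p-1)/2}$ (so $\ord_\fp=\tfrac{p-1}{2}\ord_p$ on $\Z$) then gives $p\mid x$ together with $\ord_p(a)=\ell\kappa-1$. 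This yields the dichotomy $p\mid x\iff p\mid a$ separating cases (i) and (ii), the exact value $\ord_\fp(\beta_j)=1$ in case (ii), and the exponent of $p$ in $a$.

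For coprimality I would use the identities $(\theta_k-2)\beta_j-(\theta_j-2)\beta_k=4(\theta_k-\theta_j)a^2$ and $(\theta_k+2)\beta_j-(\theta_j+2)\beta_k=4(\theta_j-\theta_k)b^2$: combined with $a^2\OO_K+b^2\OO_K=\OO_K$ and $(\theta_j-\theta_k)\OO_K=\fp$ they show $\beta_j\OO_K+\beta_k\OO_K$ divides $4\fp$, hence divides $\fp$ (as $\beta_j$ is coprime to $2$); similarly, using $\beta_j\equiv(\theta_j-2)b^2\pmod{a\OO_K}$, the ideal $a\OO_K+\beta_j\OO_K$ divides $\fp$. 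So $a\OO_K,\beta_1\OO_K,\dots,\beta_{(p-1)/2}\OO_K$ are pairwise coprime outside $\fp$, and only $a\OO_K$ meets the primes above $2$. Now factor into prime ideals: since $p$ is the only prime ramified in $K/\Q$ and $\ell\ne p$, every prime $\mathfrak Q\nmid p$ is unramified, so $\ell\mid\ord_{\mathfrak Q}(x^\ell\OO_K)$, and as any such $\mathfrak Q$ not above $2$ divides at most one of the factors, that factor has $\mathfrak Q$-exponent divisible by $\ell$. In case (i) no factor meets $\fp$, so the factors are pairwise coprime and each is an $\ell$-th power; moreover $a/2^{\ell n}$ is an odd rational integer all of whose prime valuations are divisible by $\ell$, so (as $\ell$ is odd) $a/2^{\ell n}=\alpha^\ell$ with $\alpha\in\Z$ coprime to $2p$, while $\beta_j\OO_K=\gb_j^\ell$, and $\alpha\OO_K,\gb_1,\dots,\gb_{(p-1)/2}$ are pairwise coprime and coprime to $2p$. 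In case (ii) I would first extract the single factor $\fp$ from each $\beta_j\OO_K$ and the part $(2\OO_K)^{\ell n}\fp^{(p-1)(\ell\kappa-1)/2}$ from $a\OO_K$; the remaining ideals are pairwise coprime, coprime to $2p$, and $\ell$-th powers, giving $a=2^{\ell n}p^{\ell\kappa-1}\alpha^\ell$ and $\beta_j\OO_K=\fp\cdot\gb_j^\ell$ with the required coprimality.

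The only step calling for genuine care is the bookkeeping at $\fp$: showing that $\ord_\fp(\beta_j)$ equals $1$ exactly — not merely that $\fp\mid\beta_j$ — when $p\mid x$, and correctly carrying the factor $(p-1)/2$ between $\ord_p$ and $\ord_\fp$ so as to arrive at the exponent $\ell\kappa-1$ in $a$. Everything else — the reductions modulo $2$ and modulo $\fp$, the two algebraic identities, and the passage from a pairwise-coprime ideal factorization to $\ell$-th powers — is routine given Lemma~\ref{lem:cyc} and unique factorization of ideals in $\OO_K$.
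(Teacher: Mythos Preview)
Your proposal is correct and follows essentially the same route as the paper's proof: establish that the factors $a\OO_K,\beta_1\OO_K,\dots,\beta_{(p-1)/2}\OO_K$ are pairwise coprime away from $\fp$, determine their exact behaviour at $2$ and at $\fp$, and then read off the $\ell$-th-power structure from unique factorization in $\OO_K$. The differences are cosmetic: you prove $a$ even and $b$ odd directly in $\Z[i]$ whereas the paper deduces it from the oddness of $z=a^2+b^2$ and of the $\beta_j$; you write out two linear combinations $(\theta_k\mp 2)\beta_j-(\theta_j\mp 2)\beta_k$ where the paper packages the same computation as a single $2\times 2$ determinant; and you spell out the descent from ``$a\OO_K$ is an $\ell$-th ideal power'' to ``$a/2^{\ell n}\in\Z$ is an $\ell$-th power'' via unramification away from $p$, which the paper suppresses under ``follows immediately''. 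One small remark: the clause ``every prime $\mathfrak Q\nmid p$ is unramified, so $\ell\mid\ord_{\mathfrak Q}(x^\ell\OO_K)$'' is phrased oddly---divisibility of $\ord_{\mathfrak Q}(x^\ell)$ by $\ell$ holds for every $\mathfrak Q$ trivially; unramification is needed only later, when passing from $\ell\mid\ord_{\mathfrak Q}(a)$ back to $\ell\mid\ord_q(a)$ for rational primes $q\ne p$.
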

\begin{proof}
As $z=a^2+b^2$ is odd, exactly one of $a$, $b$ is even.
Thus the $\beta_j$ are coprime to $2 \OO_K$. We see from \eqref{eqn:factor}
that $2^{\ell n} \mid\mid a$, and hence that $b$ is odd.

As $a$, $b$ are coprime,
it is clear that the greatest common divisor of $a \OO_K$
and $\beta_j \OO_K$ divides $(\theta_j-2) \OO_K=\fp$.
Moreover, for $k \ne j$, the greatest common
divisor of $\beta_j \OO_K$ and 
$\beta_k \OO_K$ divides
\[
\left((\theta_j+2)(\theta_k-2)-(\theta_k+2)(\theta_j-2) \right) \OO_K
=4 (\theta_k-\theta_j) \OO_K= 4 \fp.
\]
However, $\beta_j$ is odd, and so the greatest common
divisor of $\beta_j \OO_K$ and
$\beta_k \OO_K$ divides $\fp$.
Now part (i) 
of the lemma follows immediately from \eqref{eqn:factor}.
So suppose $p \mid x$. For part (ii) we have to
check that $\fp \mid\mid \beta_j$. However, since 
$(\theta_j-2)\OO_K=\fp$, and $\theta_j+2 \in \OO_K^\times$,
reducing \eqref{eqn:factor} modulo $\fp$ shows that $a^p \equiv 0 \pmod{\fp}$,
and hence that $p \mid a$. Since $a$, $b$ are coprime, it follows
that $\ord_\fp(\beta_j)=1$. Now, from \eqref{eqn:factor}, 
\[
\frac{(p-1)}{2} \ord_p(a)
=\ord_\fp(a)=\ell \ord_\fp(x) - \sum_{j=1}^{(p-1)/2} \ord_\fp(\beta_j) 
=\frac{(p-1)}{2} (\kappa \ell -1)
\]
giving the desired exponent of $p$ in the factorization of $a$.
\end{proof}

\section{Proof of Theorem~\ref{thm:1} for $p=3$}\label{sec:peq3}
Suppose $p=3$. Then $K=\Q$
 and $\theta:=\theta_1=-1$.
We treat first the case $3 \nmid x$. 
By Lemma~\ref{lem:factor},
\[
a=2^{\ell n}\alpha^\ell, \qquad a^2-3b^2=\gamma^\ell
\]
for some coprime odd rational integers $\alpha$ and $\gamma$.
We obtain the equation 
\[
2^{2\ell n}\alpha^{2\ell}-\gamma^\ell=3b^2.
\]
\citet[Theorem 1]{BS} show that equation $x^n+y^n=3z^2$
has no solutions in coprime integers $x$, $y$, $z$ for $n \ge 4$,
giving us a contradiction.

We now treat $3 \mid x$. By Lemma~\ref{lem:factor},
\[
a=2^{\ell n} 3^{\kappa \ell -1} \alpha^\ell, \qquad a^2-3b^2=3\gamma^\ell
\]
for coprime rational integers $\alpha$, $\gamma$ that are also coprime to $6$.
Thus
\[
2^{2\ell n} 3^{2\kappa \ell -3} \alpha^{2\ell}-\gamma^\ell=b^2.
\]
Using the recipes of \citet[Sections 2, 3]{BS} we can attach a Frey 
curve to such a triple $(\alpha,\gamma,b)$ whose mod $\ell$
representation arises from a classical newform of weight $2$
and level $6$. As there are no such newforms our contradiction
is complete. 

\section{The Frey Curve}\label{sec:FreyCurve}

We shall henceforth suppose $p \ge 5$. 
From now on, fix $1 \le j$, $k \le (p-1)/2$ with $j \ne k$.
The expressions $\beta_j$, $\beta_k$ shall be given by \eqref{eqn:beta}.
For each such choice of $(j,k)$ we shall construct a Frey curve.
The idea is that the three expressions $a^2$, $\beta_j$, $\beta_k$
are roughly $\ell$-th powers (Lemma~\ref{lem:factor}). Moreover they
are linear combinations of $a^2$ and $b^2$, and hence must be
linearly dependent. 
Writing down this linear relation gives a Fermat equation (with coefficients) of signature $(\ell,\ell,\ell)$.
As in the work Hellegouarch, Frey, Serre, Ribet, Kraus, and many others, one can associate to
such an equation a Frey elliptic curve whose mod $\ell$ representation has very little ramification.
In what follows we take care to scale the expressions $a^2$, $\beta_j$, $\beta_k$
appropriately so that the Frey curve is semistable.

\medskip

\noindent \textbf{Case I: $p \nmid x$}. 
Let
\begin{equation}
\label{eqn:uvw}
u= 
\beta_j ,  \qquad
v=- \frac{(\theta_j-2)}{(\theta_k-2)} \beta_k , \qquad
w=\frac{4 (\theta_j-\theta_k)}{(\theta_k-2)} \cdot a^2.
\end{equation}
Then
$u+v+w=0$.
Moreover, by Lemmas~\ref{lem:cyc} and~\ref{lem:factor},
\[
u \OO_K=\gb_j^\ell, \qquad
v \OO_K=\gb_k^\ell, \qquad
w \OO_K= 2^{2 \ell n+2} \cdot \alpha^{2 \ell} \OO_K.
\]
We will let the Frey curve be
\begin{equation}\label{eqn:Frey}
E=E_{j,k} \; : \; Y^2=X(X-u)(X+v).
\end{equation}
For a non-zero ideal $\ga$, we define its \textbf{radical}, denoted by $\Rad(\ga)$,
to be the product of the distinct prime ideal factors of $\ga$.
\begin{lem}\label{lem:Frey1}
Suppose $p \nmid x$. Let $E$ be the Frey curve \eqref{eqn:Frey}
where $u$, $v$, $w$ are given by \eqref{eqn:uvw}.
The curve $E$ is semistable, with
multiplicative reduction at all primes above $2$
and good reduction at $\fp$. It has 
minimal discriminant 
and conductor
\[
\cD_{E/K}
=
2^{4 \ell n-4} {\alpha}^{4 \ell} \gb_j^{2\ell} \gb_k^{2\ell}, \qquad
\cN_{E/K}= 2 \cdot \Rad(\alpha \gb_j \gb_k).
\]
\end{lem}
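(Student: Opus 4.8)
The plan is to work prime-by-prime, using the standard Kraus–Frey formulae for the discriminant and $c_4$ of the model $Y^2 = X(X-u)(X+v)$ together with the factorizations of $u$, $v$, $w$ recorded just above the statement. Recall that for this model one has $\Delta = 16\,(uvw)^2$ and $c_4 = 16(u^2+uv+v^2)$ (using $u+v+w=0$, so that the three ``roots'' are $0$, $u$, $-v$ with pairwise differences $u$, $v$, $w$ up to sign). Since $u\OO_K = \gb_j^\ell$, $v\OO_K = \gb_k^\ell$ and $w\OO_K = 2^{2\ell n+2}\alpha^{2\ell}\OO_K$, we get
\[
\Delta\,\OO_K = 2^4 \cdot 2^{2(2\ell n+2)} \cdot \alpha^{4\ell}\,\gb_j^{2\ell}\,\gb_k^{2\ell} = 2^{4\ell n + 8}\,\alpha^{4\ell}\,\gb_j^{2\ell}\,\gb_k^{2\ell}.
\]
The first main step is the analysis at primes above $2$. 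Here $2\mid x$, so $n = \ord_2(x)\ge 1$; one checks $\ord_\fq(u) = \ord_\fq(v) = 0$ for $\fq\mid 2$ (the $\beta$'s are odd by Lemma~\ref{lem:factor}) while $\ord_\fq(w) = 2\ell n + 2 \cdot\ord_\fq(2) > 0$. Thus $\ord_\fq(c_4) = 0$ since $u^2+uv+v^2 \equiv u^2 + uv + v^2$ is a $2$-adic unit (as $u \equiv -v$ and $v$ is an odd unit locally, $u^2+uv+v^2 \equiv v^2$), so $E$ has potentially multiplicative reduction at $\fq$, and Tate's algorithm gives multiplicative reduction with $\ord_\fq(\cN) = 1$ and conductor exponent $1$. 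Comparing $\ord_\fq(\Delta) = 4\ell n + 8\,\ord_\fq(2)$ with the minimal discriminant: the model can be made minimal at $\fq$ by removing a twelfth power; the ``excess'' is precisely the $2^{12}$ one would expect from the scaling, giving $\ord_\fq(\cD_{E/K}) = \ord_\fq(\Delta) - 12 = 4\ell n - 4 + 8\,\ord_\fq(2)$, i.e. the global factor $2^{4\ell n - 4}$ in the stated $\cD_{E/K}$.

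The second step is the prime $\fp$ above $p$: by Lemma~\ref{lem:cyc} the quantities $\theta_j - 2$, $\theta_k - 2$, $\theta_j - \theta_k$ all generate $\fp$, so in the formulae \eqref{eqn:uvw} the coefficients $(\theta_j-2)/(\theta_k-2)$ and $4(\theta_j-\theta_k)/(\theta_k-2)$ are $\fp$-units; hence $\ord_\fp(u) = \ord_\fp(v) = \ell\,\ord_\fp(\gb_j) = 0$ and $\ord_\fp(w) = 0$ as well (since $\alpha$ is coprime to $p$ and $p\ne 2$). So $\ord_\fp(\Delta) = 0$ and $E$ has good reduction at $\fp$. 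The third step covers all other primes $\fq \nmid 2p$: there $\ord_\fq(\Delta) = \ell\bigl(2\ord_\fq(\gb_j) + 2\ord_\fq(\gb_k)\bigr) + 4\ell\,\ord_\fq(\alpha)$, and the three valuations $\ord_\fq(u),\ord_\fq(v),\ord_\fq(w)$ cannot all be positive (the ideals $\alpha\OO_K, \gb_j, \gb_k$ are pairwise coprime), so $E$ is semistable at $\fq$; it has good reduction unless $\fq$ divides exactly one of them, in which case it has multiplicative reduction and contributes a single prime to the radical. Assembling: $E$ is semistable everywhere, with multiplicative reduction exactly at $2$ and at the primes dividing $\alpha\gb_j\gb_k$, good reduction at $\fp$, and $\cN_{E/K} = 2\cdot\Rad(\alpha\gb_j\gb_k)$, while $\cD_{E/K}$ is as stated.

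The main obstacle is the bookkeeping at $\fq\mid 2$: one must confirm that the model is \emph{not} already minimal and that exactly a $2^{12}$ (per prime, weighted by $\ord_\fq(2)$) can and must be scaled out, and that after this the reduction is genuinely multiplicative (type $\mathrm{I}_n$) rather than additive. This is where the careful scaling built into \eqref{eqn:uvw} pays off; concretely one runs Tate's algorithm at $\fq$, or equivalently checks that $v_\fq(c_4) = 0$ while $v_\fq(\Delta) > 0$ after minimalization, which forces multiplicative reduction. Everything else is a routine valuation count using Lemmas~\ref{lem:cyc} and~\ref{lem:factor}.
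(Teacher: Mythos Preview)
Your outline is correct away from $2$, but the analysis at primes $\fq \mid 2$ contains a genuine error. You claim $\ord_\fq(c_4) = 0$ on the grounds that $u^2 + uv + v^2$ is a $\fq$-unit. The latter is true (indeed $u \equiv -v \pmod{\fq^{2\ell n+2}}$ gives $u^2+uv+v^2 \equiv v^2$), but $c_4 = 16(u^2+uv+v^2)$, and since $2$ is unramified in $K$ one has $\ord_\fq(c_4) = \ord_\fq(16) = 4$, not $0$. With $\ord_\fq(c_4) = 4$, $\ord_\fq(c_6) = 6$ and $\ord_\fq(\Delta) = 4\ell n + 8$, the given model is visibly non-minimal, yet you cannot read off multiplicative reduction directly: in residue characteristic $2$ the valuations alone do not guarantee that an integral model with $\ord_\fq(c_4) = 0$ exists, and your final paragraph's appeal to ``$v_\fq(c_4) = 0$ after minimalization'' is precisely the statement that needs proof. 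Running Tate's algorithm by hand is possible here but requires several coordinate changes and is not the one-line check your write-up suggests.

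The paper handles this point differently. From $\ord_\fq(j(E)) = -(4\ell n - 4) < 0$ one gets \emph{potentially} multiplicative reduction at $\fq$. To upgrade to genuine multiplicative reduction the paper invokes the twist criterion: writing $\gamma = -c_4/c_6$, the curve $E$ has multiplicative (rather than additive) reduction at $\fq$ if and only if $K_\fq(\sqrt{\gamma})/K_\fq$ is unramified. A short computation modulo a high power of $2$ shows $K_\fq(\sqrt{\gamma}) = K_\fq(\sqrt{\theta_j - 2})$, and since $\theta_j - 2 = (\zeta^r - \zeta^{-r})^2$ (with $2r \equiv j \pmod p$) this extension lies inside the unramified extension $K_\fq(\zeta)$. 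Once multiplicative reduction is established, the minimal model necessarily has $\ord_\fq(c_4^{\min}) = 0$, so $\cD_{E/K}$ differs from $\Delta$ by exactly $2^{12}$ at each $\fq \mid 2$, yielding the stated exponent $4\ell n - 4$. This twist argument is the ingredient missing from your proof; your treatment of good reduction at $\fp$ and of semistability at primes $\fq \nmid 2p$ is fine and matches the paper.
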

\begin{proof}
The invariants $c_4$, $c_6$, $\Delta$, $j(E)$ have their usual meanings
and are given by:
\begin{equation}\label{eqn:inv}
\begin{gathered}
c_4=16(u^2-vw)=16(v^2-wu)=16(w^2-uv),\\
c_6=-32(u-v)(v-w)(w-u), \qquad 
\Delta=16 u^2 v^2 w^2, \qquad j(E)=c_4^3/\Delta \, .
\end{gathered}
\end{equation}
By Lemma~\ref{lem:factor}, we have $\alpha \OO_K$, $\gb_j$ and $\gb_k$
are pairwise coprime, and all coprime to $2p$.
In particular $\fp \nmid \Delta$ and so $E$ has good reduction
at $\fp$. Moreover, $c_4$ and $\Delta$ are coprime away from $2$.
Hence the model in \eqref{eqn:Frey} is already semistable away from $2$.
Recall that $2^\ell \mid a$ and $2 \nmid b$. Thus 
\[
u \equiv (\theta_j-2) b^2 \pmod{2^{2\ell}}, \quad
v \equiv -(\theta_j-2) b^2 \pmod{2^{2\ell}}, \quad
w \equiv 0 \pmod{2^{2\ell+2}}.
\]
It is clear that $\ord_\fq(j)<0$ for all $\fq \mid 2$. Thus $E$
has potentially multiplicative reduction at all $\fq \mid 2$.
Write $\gamma=-c_4/c_6$.
To show that $E$ has multiplicative reduction at $\fq$ 
it is enough to show that $K_\fq(\sqrt{\gamma})/K_\fq$
is an unramified extension \citep[Exercise V.5.11]{SilII}.
However, 
\[
c_4/16=(u^2-vw) \equiv (\theta_j-2)^2 \cdot b^4 \pmod {2^{2\ell}}
\]
which shows that $c_4$ is a square in $K_\fq$. Moreover,
\[
-c_6/16=2 (u-v)(v-w)(w-u)
\equiv 4 \cdot (\theta_j-2)^3 \cdot b^6 \pmod{2^{2\ell+1}} \, .
\]
Thus $K_\fq(\sqrt{\gamma})=K_\fq(\sqrt{\theta_j-2})$.
As before, letting $r$ satisfy $2r \equiv j \pmod{p}$,
we have $\theta_j-2=(\zeta^r-\zeta^{-r})^2$ and so
$K_\fq(\sqrt{\gamma})$ is contained in the unramified extension $K_\fq(\zeta)$.
Hence $E$ has multiplicative reduction at $\fq \mid 2$.

Finally $2$ is unramified in $K$, and so $\ord_\fq(c_4)=\ord_2(16)=4$.
Hence $\cD_{E/K}=(\Delta/2^{12}) \cdot \OO_K$ as required.
\end{proof}

\medskip

\noindent \textbf{Case II: $p \mid x$}. 
Let
\begin{equation}
\label{eqn:uvwp}
u= 
\frac{\beta_j}{(\theta_j-2)}, \qquad
v=- \frac{\beta_k}{(\theta_k-2)}, \qquad
w=\frac{4 (\theta_j-\theta_k)}{(\theta_j-2)(\theta_k-2)} \cdot a^2.
\end{equation}
Then, from Lemmas~\ref{lem:cyc} and~\ref{lem:factor},
\[
u \OO_K=\gb_j^\ell, \qquad
v \OO_K=\gb_k^\ell, \qquad
w \OO_K= 2^{2 \ell n+2} \cdot \fp^{\delta} \cdot \alpha^{2 \ell} \OO_K,
\]
where 
\begin{equation}\label{eqn:delta}
\delta=(\kappa \ell-1)(p-1)-1 \, .
\end{equation}
Again $u+v+w=0$ and the Frey curve is given by \eqref{eqn:Frey}.
\begin{lem}\label{lem:Frey2}
Suppose $p \mid x$. Let $E$ be the Frey curve \eqref{eqn:Frey}
where $u$, $v$, $w$ are given by \eqref{eqn:uvwp}. 
The curve $E$ is semistable, with
multiplicative reduction at $\fp$ and at 
all primes above $2$. It has 
minimal discriminant 
and conductor
\[
\cD_{E/K}
=
2^{4 \ell n-4} \fp^{2\delta} {\alpha}^{4 \ell} \gb_j^{2\ell} \gb_k^{2\ell},
\qquad \cN_{E/K}=2 \fp \cdot \Rad(\alpha \gb_j \gb_k).
\]
\end{lem}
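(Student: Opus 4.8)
The plan is to follow the proof of Lemma~\ref{lem:Frey1} almost verbatim; the model \eqref{eqn:Frey} has the same shape, so the invariants $c_4$, $c_6$, $\Delta$, $j(E)$ are again given by \eqref{eqn:inv}. The one genuinely new feature is that $E$ will have multiplicative, rather than good, reduction at $\fp$, and verifying this is where a little extra care is needed. Throughout I would use Lemma~\ref{lem:factor}(ii): the ideals $\alpha\OO_K$, $\gb_j$, $\gb_k$ are pairwise coprime and coprime to $2p$, and $u\OO_K=\gb_j^\ell$, $v\OO_K=\gb_k^\ell$, $w\OO_K=2^{2\ell n+2}\fp^{\delta}\alpha^{2\ell}\OO_K$ with $\delta$ as in \eqref{eqn:delta}.

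First I would dispose of the primes $\mathfrak{r}\nmid 2\fp$. At such a prime at most one of $u$, $v$, $w$ is divisible by $\mathfrak{r}$, so $\mathfrak{r}$ cannot divide both $c_4=16(u^2-vw)$ and $\Delta=16u^2v^2w^2$; hence the model is minimal and semistable at $\mathfrak{r}$, with good reduction unless $\mathfrak{r}$ divides $\alpha\gb_j\gb_k$, in which case the reduction is multiplicative and $-\ord_{\mathfrak{r}}(j(E))=\ord_{\mathfrak{r}}(\Delta)$ equals $4\ell\,\ord_{\mathfrak{r}}(\alpha)$, $2\ell\,\ord_{\mathfrak{r}}(\gb_j)$ or $2\ell\,\ord_{\mathfrak{r}}(\gb_k)$ according to which of $\alpha$, $\gb_j$, $\gb_k$ it divides.

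Next I would treat $\fp$. Here $\fp\nmid uv$ while $\fp^{\delta}\|w$ with $\delta\ge 1$ (since $\kappa,\ell\ge 1$ and $p\ge 5$), so $\ord_\fp(c_4)=0$, $\ord_\fp(\Delta)=2\delta$, and $E$ has potentially multiplicative reduction at $\fp$ with the model already minimal there. To upgrade this to multiplicative reduction it suffices, by \citep[Exercise V.5.11]{SilII}, that $K_\fp(\sqrt{\gamma})/K_\fp$ be unramified, where $\gamma=-c_4/c_6$; since $\fp$ has odd residue characteristic this holds as soon as $\ord_\fp(\gamma)$ is even. From $u+v+w=0$ and $\fp\mid w$ one gets $v\equiv-u\pmod\fp$, so $u-v$, $v-w$, $w-u$ are all $\fp$-units, whence $\ord_\fp(c_6)=\ord_\fp\bigl(-32(u-v)(v-w)(w-u)\bigr)=0=\ord_\fp(c_4)$. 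Thus $E$ has multiplicative reduction at $\fp$, contributing $\fp^{2\delta}$ to $\cD_{E/K}$ and $\fp$ to $\cN_{E/K}$.

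Finally I would handle the primes $\fq\mid 2$ exactly as in Lemma~\ref{lem:Frey1}. Because $2^\ell\mid a$, $2\nmid b$, and $\theta_j-2$, $\theta_k-2$ are $\fq$-units (their ideals being $\fp$, coprime to $2$ by Lemma~\ref{lem:cyc}), one has $u\equiv b^2$, $v\equiv-b^2\pmod{2^{2\ell}}$ and $w\equiv 0\pmod{2^{2\ell+2}}$. Hence $\ord_\fq(j(E))<0$, and $c_4/16\equiv b^4$, $-c_6/16\equiv 4b^6$ to high $2$-adic precision, so $\gamma=-c_4/c_6$ is a square in $K_\fq$ and $E$ has (split) multiplicative reduction at $\fq$; since $\ord_\fq(c_4)=4$ and $\ord_\fq(\Delta)=4\ell n+8$ one gets $\ord_\fq(\cD_{E/K})=-\ord_\fq(j(E))=4\ell n-4$. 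As $2$ is unramified in $K$, $\prod_{\fq\mid 2}\fq=2\OO_K$, and collecting the local contributions yields $\cD_{E/K}=2^{4\ell n-4}\fp^{2\delta}\alpha^{4\ell}\gb_j^{2\ell}\gb_k^{2\ell}$; since $E$ is semistable its conductor is $\Rad(\cD_{E/K})=2\fp\cdot\Rad(\alpha\gb_j\gb_k)$. I expect the only real obstacle to be the argument at $\fp$: one must check not merely that $\fp\nmid c_4$ but also that $\ord_\fp(c_6)=0$, so that the quadratic character governing the reduction type is unramified at $\fp$; the remainder is a routine transcription of Case~I, the unit factors $(\theta_j-2)^{-1}$, $(\theta_k-2)^{-1}$ now being built into $u$, $v$, $w$ from the start.
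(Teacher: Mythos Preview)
Your proof is correct and is precisely the ``easy modification of the proof of Lemma~\ref{lem:Frey1}'' that the paper invokes without further detail. The only substantive new step beyond Lemma~\ref{lem:Frey1} is the analysis at $\fp$, and your argument there---showing $\ord_\fp(c_4)=\ord_\fp(c_6)=0$ via $u-v\equiv 2u$, $v-w\equiv v$, $w-u\equiv -u\pmod\fp$, hence $\ord_\fp(\gamma)=0$ is even and the quadratic twist is unramified---is clean and complete. One tiny quibble: the parenthetical ``$\delta\ge 1$ (since $\kappa,\ell\ge 1$ and $p\ge 5$)'' is not literally sufficient (it would fail for $\kappa=\ell=1$); you implicitly use $\ell\ge 5$, which is the standing hypothesis, so this is harmless.
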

\begin{proof}
The proof is an
easy modification of the proof of Lemma~\ref{lem:Frey1}.
\end{proof}

\section{A closer look at the Frey Curve for $p \equiv 1 \pmod{4}$}
\label{sec:FreyCurve2}

In this section we shall suppose that $p \equiv 1 \pmod{4}$. 
The Galois group of $K=\Q(\zeta+\zeta^{-1})$ is cyclic
of order $(p-1)/2$. Thus
the field $K=\Q(\zeta+\zeta^{-1})$ has a unique involution 
$\tau \in \Gal(K/\Q)$ and we let $K^\prime$ be the 
subfield of degree $(p-1)/4$ that is fixed by this involution.
In the previous section we let $1 \le j$, $k \le (p-1)/2$ 
with $j \ne k$. In this section we shall impose the further
condition that $\tau(\theta_j)=\theta_k$. 
Now a glance at the definition \eqref{eqn:Frey} of the Frey curve $E$
and the formulae \eqref{eqn:uvwp} for $u$ and $v$
in the case $p \mid x$ shows that the curve $E$ is in fact defined over 
$K^\prime$. This is not true in the case $p \nmid x$, but we can take
a twist of the Frey curve so that it is defined over $K^\prime$.

\bigskip

\noindent \textbf{Case I: $p \nmid x$}.

Let
\begin{equation}\label{eqn:uvwd}
u^\prime= 
(\theta_k-2)\beta_j ,  \qquad
v^\prime=- (\theta_j-2)\beta_k, \qquad 
w^\prime=4 (\theta_j-\theta_k) \cdot a^2,
\end{equation}
and let
\[
E^\prime \; : \; Y^2=X(X-u^\prime)(X+v^\prime) \, .
\] 
Clearly the coefficients of
$E^\prime$ are invariant under $\tau$ and so $E^\prime$
is defined over $K^\prime$. Moreover, $E^\prime/K$ 
is the quadratic twist of $E/K$ by $(\theta_k-2)$.
Let $\sB$ be the unique prime of $K^\prime$ above $p$.
Let 
\[
\gb_{j,k}=\norm_{K/K^\prime}(\gb_j)
=\norm_{K/K^\prime}(\gb_k) \, .
\]
It follows from Lemma~\ref{lem:factor} that the $\OO_{K^\prime}$-ideal
$\gb_{j,k}$ is coprime to $\alpha$ and to $2p$.
An easy calculation leads us to the following lemma.
\begin{lem}\label{lem:Frey3}
Suppose $p \nmid x$. Let $E^\prime/K^\prime$ be the above Frey elliptic curve.
Then $E^\prime$ is semistable away from $\sB$,
with minimal discriminant and conductor
\[
\cD_{E^\prime/K^\prime}
=
2^{4 \ell n-4} \sB^3 {\alpha}^{4 \ell} \gb_{j,k}^{2\ell}, \qquad
\cN_{E^\prime/K^\prime}= 2 \cdot \sB^2 \cdot \Rad(\alpha \gb_{j,k}).
\]  
\end{lem}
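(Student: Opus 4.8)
The plan is to derive $\cD_{E^\prime/K^\prime}$ and $\cN_{E^\prime/K^\prime}$ from the already-established Lemma~\ref{lem:Frey1} by tracking what the quadratic twist by $(\theta_k-2)$ does, and then descending from $K$ to $K^\prime$ via the norm map. First I would note that $E^\prime/K$ is the quadratic twist of the Case~I Frey curve $E/K$ of Lemma~\ref{lem:Frey1} by $d=(\theta_k-2)$, where by Lemma~\ref{lem:cyc} the ideal $(\theta_k-2)\OO_K=\fp$, so the twisting parameter is a uniformiser at $\fp$ and a unit everywhere else. Twisting by $d$ changes $(c_4,c_6,\Delta)$ to $(d^2 c_4, d^3 c_6, d^6\Delta)$, so away from $\fp$ and $2$ the curve $E^\prime$ inherits semistability and the same radical from $E$; at $\fp$ the twist by a ramified element turns the good reduction of $E$ at $\fp$ into additive (potentially good) reduction. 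Because $\fp$ lies over $p\ge 5$ and $p$ is odd, the reduction type is computed from $\ord_\fp(\Delta_{E^\prime})=\ord_\fp(d^6\Delta_E)=6$ together with $\ord_\fp(c_4)=2$; this is the classical "type $\mathrm{I}_0^*$" situation (potentially good reduction, quadratic ramified twist of good reduction), giving conductor exponent $2$ at $\fp$ and, after passing to the minimal model, discriminant valuation $6$ there. Over $K$ this already yields $\cD_{E^\prime/K}=2^{4\ell n-4}\fp^6\alpha^{4\ell}\gb_j^{2\ell}\gb_k^{2\ell}$ and $\cN_{E^\prime/K}=2\fp^2\,\Rad(\alpha\gb_j\gb_k)$.

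Next I would descend to $K^\prime$. Since $E^\prime$ has coefficients in $K^\prime$ (the $\tau$-invariance of $u^\prime,v^\prime,w^\prime$ is immediate from $\tau(\theta_j)=\theta_k$ and hence $\tau(\beta_j)=\beta_k$), and since $2$, $p$ are unramified in $K$ and $K/\Q$ is abelian, the decomposition of each relevant prime of $K^\prime$ in $K$ is governed by the splitting behaviour already recorded. The prime $\sB$ of $K^\prime$ above $p$ has $\sB\OO_K=\fp$ (as $p$ is totally ramified in $\Q(\zeta)$, the prime above $p$ is inert-free: there is a unique prime above $p$ at every level and the residue extension $\F_\fp/\F_\sB$ is trivial of degree... — more precisely $e(\fp/\sB)=1$ since $e(\fp/p)=e(\sB/p)$ forces $\fp/\sB$ unramified, and $f(\fp/\sB)=[K:K^\prime]/(\text{split part})$, but $p$ nonsplit gives $f=2$); I would make this precise and conclude $\ord_\sB$ of the $K^\prime$-discriminant equals $\ord_\fp$ of the $K$-discriminant, so the $\fp^6$ over $K$ becomes $\sB^3$ over $K^\prime$ after accounting for the degree-$2$ residue extension — this is exactly the bookkeeping that converts $\fp^6\mapsto\sB^3$ and $\fp^2\mapsto\sB^2$. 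For the primes away from $p$: each $\gb_j$ in $K$ contributes, under $\norm_{K/K^\prime}$, the ideal $\gb_{j,k}$, and since $\gb_j\OO_K=\norm_{K/K^\prime}(\gb_j)\OO_K$ only up to the action of $\tau$ which swaps $\gb_j\leftrightarrow\gb_k$, I would check that the product $\gb_j^{2\ell}\gb_k^{2\ell}$ over $K$ descends to $\gb_{j,k}^{2\ell}$ over $K^\prime$ and $\Rad(\alpha\gb_j\gb_k)$ descends to $\Rad(\alpha\gb_{j,k})$; the primes above $2$ are unramified, so $2\OO_{K^\prime}$ contributes a factor $2$ to the conductor exactly as over $K$, and multiplicative reduction there is preserved by the twist since $d=(\theta_k-2)$ is a unit at $2$.

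The main obstacle, and the step I would spend real care on, is the local computation at $\sB$: confirming that the quadratic twist of a curve with good reduction by a ramified uniformiser produces conductor exponent exactly $2$ (Kodaira type $\mathrm{I}_0^*$) and minimal discriminant valuation exactly $6$, including checking that the naive model $Y^2=X(X-u^\prime)(X+v^\prime)$ is already minimal at $\sB$ or computing the minimal model if it is not. One must verify $\ord_\sB(c_4)=2$, $\ord_\sB(c_6)=3$, $\ord_\sB(\Delta)=6$ for the given model, apply Tate's algorithm (or the tables of Papadopoulos, valid since $\sB\nmid 2,3$) to read off type $\mathrm{I}_0^*$, and confirm no further reduction is possible; then the exponent-$3$ in the discriminant and exponent-$2$ in the conductor follow. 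Everything else — the behaviour at $2$ and at the $\gb$-primes, and the semistability away from $\sB$ — is a direct transcription of Lemma~\ref{lem:Frey1} through the twist and the norm, using that the twisting element and the relevant residue characteristics are all coprime to $2$.
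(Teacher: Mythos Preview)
Your twist-and-descend strategy is reasonable in outline, but the local analysis at $p$ rests on an incorrect ramification computation. You assert $e(\fp/\sB)=1$ and $f(\fp/\sB)=2$, arguing that ``$e(\fp/p)=e(\sB/p)$''. In fact the opposite holds: $p$ is totally ramified in $\Q(\zeta_p)$ and hence in every subfield, so $e(\sB/p)=[K^\prime:\Q]=(p-1)/4$ while $e(\fp/p)=[K:\Q]=(p-1)/2$, giving $e(\fp/\sB)=2$ and $f(\fp/\sB)=1$. With your (wrong) ramification you would obtain $\ord_\sB(\Delta^\prime)=\ord_\fp(\Delta^\prime)=6$, which does not yield the required $\sB^3$; your jump from ``type $\mathrm{I}_0^*$ with $\ord_\sB(\Delta)=6$'' to ``exponent $3$ in the discriminant'' is unexplained and in fact inconsistent with the valuations you wrote down. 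Likewise the bookkeeping ``$\fp^2\mapsto\sB^2$'' for the conductor is not how conductors descend (with $e=2$ one has $\fp^2=\sB\OO_K$); the exponent $2$ at $\sB$ comes instead from the intrinsic fact that additive reduction in residue characteristic $\ge 5$ always has conductor exponent $2$.

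With the correct ramification index $e(\fp/\sB)=2$, one finds $\ord_\sB(\Delta^\prime)=\tfrac{1}{2}\ord_\fp(\Delta^\prime)=3$ and $\ord_\sB(c_4^\prime)=1$, so the given model is already minimal at $\sB$ and the Kodaira type there is III rather than $\mathrm{I}_0^*$; the conductor exponent is $2$ as stated. The paper itself gives no details beyond ``an easy calculation'', i.e.\ a direct computation of the invariants of the $K^\prime$-model. Once you correct the ramification of $\fp/\sB$, your approach via twisting and comparison with Lemma~\ref{lem:Frey1} recovers the same formulae.
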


\bigskip

\noindent \textbf{Case II: $p \mid x$}.

Another straightforward computation yields the following lemma.
\begin{lem}\label{lem:Frey4}
Suppose $p \mid x$. Let $E^\prime=E$ be the Frey curve in Lemma~\ref{lem:Frey2}.
Then $E^\prime$ is defined $K^\prime$.
The curve $E^\prime/K^\prime$ is semistable
 with minimal discriminant and conductor 
\[
\cD_{E^\prime/K^\prime}
=
2^{4 \ell n-4} \sB^\delta {\alpha}^{4 \ell} \gb_{j,k}^{2\ell}, \qquad
\cN_{E^\prime/K^\prime}= 2 \cdot \sB \cdot \Rad(\alpha \gb_{j,k}),
\]
where $\delta$ is given by \eqref{eqn:delta}.
\end{lem}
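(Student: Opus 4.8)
The plan is to derive Lemma~\ref{lem:Frey4} from Lemma~\ref{lem:Frey2}, using that $K/K^\prime$ is ramified \emph{only} at $\sB$, with $\sB\OO_K=\fp^2$: indeed $p\OO_K=\fp^{(p-1)/2}$ and $p\OO_{K^\prime}=\sB^{(p-1)/4}$ force $e(\fp/\sB)=2$, while $K\subseteq\Q(\zeta)$ is ramified over $\Q$ only above $p$. The standing hypothesis $\tau(\theta_j)=\theta_k$ gives $\tau(\beta_j)=\beta_k$ (as $a,b\in\Z$), so with $u,v,w$ as in \eqref{eqn:uvwp} one has $\tau(u)=-v$, $\tau(v)=-u$, $\tau(w)=-w$. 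By Lemmas~\ref{lem:cyc} and~\ref{lem:factor}, $u\OO_K=\gb_j^\ell$, $v\OO_K=\gb_k^\ell$, $w\OO_K=2^{2\ell n+2}\fp^\delta\alpha^{2\ell}\OO_K$; in particular $u,v,w\in\OO_K$, so $E^\prime\colon Y^2=X^3+(v-u)X^2-uvX$ is an integral Weierstrass model over $K^\prime$ (its coefficients $v-u$, $-uv$, and likewise $c_4,c_6,\Delta$ from \eqref{eqn:inv}, are $\tau$-fixed, hence lie in $K^\prime$). This is what it means for $E^\prime$ to be defined over $K^\prime$.

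First I would compute $\Delta\OO_{K^\prime}$, where $\Delta=16u^2v^2w^2$, by pushing the $K$-valuations down: at a prime $\mathfrak{r}$ of $K^\prime$ not above $2p$ the extension is unramified, so $\ord_\mathfrak{r}(\Delta)=\ord_\mathfrak{r}(\alpha^{4\ell}\gb_{j,k}^{2\ell})$ (using $\tau(\gb_j)=\gb_k$, $\gb_j\gb_k=\gb_{j,k}\OO_K$, and pairwise coprimality); at $\mathfrak{r}\mid 2$ one gets $\ord_\mathfrak{r}(\Delta)=4\ell n+8$; and at $\mathfrak{r}=\sB$, since $\ord_\fp(\Delta)=2\delta$ and $e(\fp/\sB)=2$, one gets $\ord_\sB(\Delta)=\delta$. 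Thus $\Delta\OO_{K^\prime}=2^{4\ell n+8}\sB^\delta\alpha^{4\ell}\gb_{j,k}^{2\ell}$.

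Next I would settle the reduction type prime by prime. Outside $2\sB$, where $K/K^\prime$ is unramified, the valuations of $c_4,c_6,\Delta$ over $K^\prime$ and over $K$ coincide, so by Lemma~\ref{lem:Frey2} the model $E^\prime$ is minimal at each such prime, with good reduction away from $\alpha\gb_{j,k}$ and multiplicative reduction at the primes dividing it. The same comparison at the primes above $2$ gives multiplicative reduction there, and the transformation lowering $(c_4,c_6,\Delta)$ by $(2^4,2^6,2^{12})$ yields a minimal model, exactly as in Lemma~\ref{lem:Frey1} ($2$ is unramified in $K^\prime$, and modulo a high power of $2$ one has $u\equiv b^2$, $v\equiv -b^2$, $w\equiv 0$). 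At the remaining prime $\sB$, which is ramified in $K/K^\prime$, I would argue directly: from \eqref{eqn:inv}, $c_4=16(u^2-vw)\equiv 16u^2\pmod{\fp}$ since $\ord_\fp(w)=\delta>0$ and $\fp\nmid u$, so $\ord_\fp(c_4)=0$ and hence $\ord_\sB(c_4)=0$; as $\sB$ has residue characteristic $p\geq 5$, the pair $\ord_\sB(c_4)=0$, $\ord_\sB(\Delta)=\delta>0$ forces multiplicative reduction of Kodaira type $I_\delta$ at $\sB$, with the model minimal there. Putting this together, $\cD_{E^\prime/K^\prime}=\Delta\OO_{K^\prime}/2^{12}=2^{4\ell n-4}\sB^\delta\alpha^{4\ell}\gb_{j,k}^{2\ell}$, and since $E^\prime/K^\prime$ is everywhere semistable its conductor is the radical of this ideal, namely $2\cdot\sB\cdot\Rad(\alpha\gb_{j,k})$ (using that $2$, $p$ are unramified in $K^\prime$, so $\prod_{\mathfrak{r}\mid 2}\mathfrak{r}=2\OO_{K^\prime}$ and $\sB$ is the unique prime above $p$, and that $\alpha,\gb_{j,k}$ are coprime to $2p$).

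The main obstacle is the single prime $\sB$: since $K/K^\prime$ ramifies there, one cannot transport the reduction type of $E/K$ at $\fp$, and one must check minimality and multiplicative reduction of $E^\prime$ at $\sB$ by hand from the valuations of $c_4$ and $\Delta$. Everything else is bookkeeping, entirely parallel to Lemmas~\ref{lem:Frey1}--\ref{lem:Frey3}.
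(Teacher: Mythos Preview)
Your argument is correct and is precisely the ``straightforward computation'' the paper alludes to (the paper gives no proof beyond that phrase). You have supplied the details the paper omits: verifying $\tau$-invariance of the coefficients via $\tau(u)=-v$, $\tau(w)=-w$; pushing down the factorisation of $\Delta$ using $\sB\OO_K=\fp^2$ and $\gb_j\gb_k=\gb_{j,k}\OO_K$; and, crucially, handling the ramified prime $\sB$ directly from $\ord_\sB(c_4)=0$ and $\ord_\sB(\Delta)=\delta>0$ rather than by transport from $K$, which would fail there.
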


\noindent \textbf{Remark.} Clearly $E$
has full $2$-torsion over $K$. 
The curve $E^\prime$
has a point of order $2$ over $K^\prime$, but not 
necessarily full $2$-torsion.

\section{Proof of Theorem~\ref{thm:2}}\label{sec:proof2}

\begin{lem}\label{lem:modularity}
Let $p$ be an odd prime. There is an ineffective
 constant $C_p^{(1)}$ depending on $p$ such
that for odd primes $\ell$, $m \ge C_p^{(1)}$,
and any non-trivial primitive solution $(x,y,z)$
of \eqref{eqn:main}, the Frey curve $E/K$ as  
in Section~\ref{sec:FreyCurve} is modular. If $p \equiv 1 \pmod{4}$
then under the same assumptions, the Frey curve $E^\prime/K^\prime$ as in Section~\ref{sec:FreyCurve2} is modular. 
\end{lem}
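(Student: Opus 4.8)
The plan is to reduce, for $\ell$ and $m$ large, to Thorne's theorem together with modularity lifting at $\ell=3$, and to settle the one remaining configuration by an ineffective descent. If $p=5$ then $K=\Q(\sqrt5)$ is real quadratic, so $E/K$ is modular for \emph{all} $\ell,m$ by the theorem of Freitas, Le Hung and Siksek that every elliptic curve over a real quadratic field is modular; when moreover $p\equiv1\pmod4$ one has $K'=\Q$ and the assertion for $E'/K'$ is classical modularity over $\Q$. So assume $p\ge7$. Then $5$ is unramified in $K=\Q(\zeta_p)^+$ (which ramifies only at $p$); in particular $5$ is not a square in $K$, and the same holds for the subfield $K'$. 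By Thorne's theorem, quoted in Section~\ref{sec:modularity}, $E/K$ is modular if $\overline{\rho}_{E,5}$ is irreducible, equivalently if $E$ has no $5$-isogeny over $K$; likewise for $E'/K'$. So we may assume the Frey curve has a $5$-isogeny over its field of definition, and must show this is impossible for $\ell,m$ large.

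Suppose $E/K$ has a $5$-isogeny. If $\overline{\rho}_{E,3}$ is irreducible and remains irreducible over $K(\zeta_3)$, then $\overline{\rho}_{E,3}$ is modular --- it is odd with solvable projective image, so Langlands--Tunnell applies --- and $E$ is then modular by the modularity lifting theorem at $\ell=3$ of Kisin and others. So we may further assume $\overline{\rho}_{E,3}$ is reducible over $L:=K(\zeta_3)$, i.e. $E$ has a $3$-isogeny over $L$. As $E$ also has its $5$-isogeny over $L$, the curve $E_L$ acquires a cyclic $15$-isogeny over $L$ while keeping its full rational $2$-torsion; hence $E$ determines an $L$-point on the modular curve $X(2)\cap X_0(15)$, of level $\Gamma(2)\cap\Gamma_0(15)$, which has genus $7$. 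Since $L=K(\zeta_3)$ depends only on $p$, Faltings' theorem shows $j(E)$ lies in a finite set $\mathcal{J}_p$ depending only on $p$.

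To conclude, recall from Lemma~\ref{lem:factor} and Section~\ref{sec:FreyCurve} that in the case $p\nmid x$ one has $u\OO_K=\gb_j^\ell$, $v\OO_K=\gb_k^\ell$, $w\OO_K=2^{2\ell n+2}\alpha^{2\ell}\OO_K$ with $\alpha\OO_K,\gb_j,\gb_k$ pairwise coprime, while $c_4$ is coprime to $\alpha\gb_j\gb_k$ away from $2$ (Lemma~\ref{lem:Frey1}); so $v_\fr(j(E))=-2\ell\,v_\fr(\gb_j)$ for every prime $\fr\mid\gb_j$, and similarly for $\gb_k$ and $\alpha$. As $j(E)\in\mathcal{J}_p$, a bound depending only on $p$ forces $\gb_j=\gb_k=\OO_K$ and $\alpha=\pm1$ once $\ell$ exceeds it. Then $u,v\in\OO_K^\times$ and $w=2^{2\ell n+2}\epsilon$ with $\epsilon\in\OO_K^\times$, and dividing $u+v+w=0$ by $-v$ yields a solution of the $S$-unit equation $X+Y=1$ over $K$ --- with $S$ the archimedean places together with the primes above $2$ --- in which $v_\fq(Y)=2\ell n+2\ge2\ell+2$ for $\fq\mid2$. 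Finiteness of solutions of this $S$-unit equation bounds $v_\fq(Y)$, which is impossible for $\ell$ larger than a bound depending only on $p$; this is the source of the ineffective constant $C_p^{(1)}$. The case $p\mid x$ (Lemma~\ref{lem:Frey2}) is identical up to the bounded extra factor $\fp$. Finally, since $K/K'$ is cyclic, modularity of $E'/K'$ follows from that of $E'_K$ by Langlands' base change, and $E'_K$ --- which is $E$ (Lemma~\ref{lem:Frey4}) or a quadratic twist of $E$ (Lemma~\ref{lem:Frey3}) --- has full rational $2$-torsion over $K$, so falls under the case just treated.

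I expect the crux to be the second paragraph: routing the residual configuration --- a $5$-isogeny, plus a $3$-isogeny after a controlled quadratic base change --- onto a modular curve of genus at least $2$, and then converting Faltings' finiteness into a genuine Diophantine contradiction via the $S$-unit equation. The appeals to Thorne's theorem and to modularity lifting at $\ell=3$ are standard inputs, and the case-work for $p\mid x$ and for $E'/K'$ is routine; the constant $C_p^{(1)}$ is ineffective because it rests on Faltings and on the $S$-unit equation.
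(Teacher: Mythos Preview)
Your argument is essentially correct, but it takes a much longer route than the paper's. The paper's proof is three lines: cite the result of \citet*{FLS} that over any totally real field there are at most finitely many non-modular $j$-invariants $j_1,\dotsc,j_r$; observe from Lemmas~\ref{lem:Frey1} and~\ref{lem:Frey2} that $\ord_\fq(j(E))=-(4\ell n-4)$ for $\fq\mid 2$; conclude that for $\ell$ (hence also $m$, after the swap in Section~\ref{sec:Descent}) sufficiently large, $j(E)\notin\{j_1,\dotsc,j_r\}$. The same $2$-adic valuation handles $E'/K'$.

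What you do instead is rebuild a special case of the FLS finiteness theorem from its ingredients: Thorne handles the mod~$5$ irreducible case, Langlands--Tunnell plus modularity lifting at $3$ handles the mod~$3$ absolutely irreducible case, and the residual configuration (a $5$-isogeny over $K$ together with a $3$-isogeny over $K(\zeta_3)$, plus full $2$-torsion) lands on a fixed modular curve of genus $\ge 2$ over a field depending only on $p$, so Faltings gives a finite set $\mathcal{J}_p$ of possible $j$-invariants. This is exactly the skeleton of the FLS argument, so you are not gaining independence from it --- you are reproving the piece of it you need. Your final step, routing through $\gb_j=\gb_k=\OO_K$, $\alpha=\pm 1$ and an $S$-unit equation, is also more elaborate than necessary: once $j(E)\in\mathcal{J}_p$, the bound $\ord_\fq(j(E))=-(4\ell n-4)\to-\infty$ at $\fq\mid 2$ already contradicts membership in a finite set, without any further Diophantine input. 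One small correction: the $S$-unit equation over a number field is \emph{effective} (Baker--Gy\H{o}ry); the ineffectivity in your argument comes solely from Faltings, as the paper's remark after the lemma also notes.
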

\begin{proof}
\citet*{FLS} show that for any totally real field $K$
there are at most finitely many non-modular $j$-invariants.
Let $j_1,\dotsc,j_r$ be the values of these $j$-invariants.
Let $\fq$ be a prime of $K$ above $2$. By Lemmas~\ref{lem:Frey1}
and~\ref{lem:Frey2},
we have $\ord_\fq(j(E)) =-(4 \ell n -4)$ with $n \ge 1$. Thus for $\ell$, $m$
 sufficiently large we have $\ord_\fq(j(E)) < \ord_\fq(j_i)$
for $i=1,\dotsc,r$, completing the proof.
\end{proof}
\noindent \textbf{Remarks.}
\begin{itemize}[leftmargin=5mm]
\item The argument in \citet*{FLS} relies on Faltings' Theorem
(finiteness of the number of rational points on a curve of genus $\ge 2$)
to deduce finiteness
of the list of possibly non-modular $j$-invariants.
It is for this reason that the constant $C_p^{(1)}$
(and hence the constant $C_p$ in Theorem~\ref{thm:2})
is ineffective.
\item In the above argument, it seems that it is enough
to suppose that $\ell$ is sufficiently large without
an assumption on $m$. However, in Section~\ref{sec:Descent}
we swapped the terms $x^{2\ell}$ and $y^{2m}$ in \eqref{eqn:main}
 if needed
to ensure that $x$ is even. Thus in the above argument 
we need to suppose that 
both $\ell$ and $m$ are sufficiently large.
\end{itemize}

We shall make use of the following  
result due to \citet[Theorem 2]{FSirred}.
It is a variant of results proved by
 \citet{KrausNF} and by \citet{David}. All these build
on the celebrated uniform boundedness theorem of \citet{Merel}.
\begin{thm}\label{thm:irred}
Let $K$ be a totally real field. There is an effectively
computable constant $C_K$ such that for a prime $\ell>C_K$,
and for an elliptic curve $E/K$ semistable at all $\lambda \mid \ell$,
the mod $\ell$ representation $\overline{\rho}_{E,\ell} \, : \, G_K \rightarrow
\GL_2(\F_\ell)$ is irreducible. 
\end{thm}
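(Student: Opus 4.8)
The plan is to reduce the claim to Merel's uniform boundedness theorem via the now-standard "isogeny character" argument, exploiting semistability at $\lambda \mid \ell$. Suppose $\overline{\rho}_{E,\ell}$ is reducible. Then, after a quadratic twist if necessary (which does not affect semistability at $\lambda \mid \ell$ nor the size of the residue fields), $E$ admits a $K$-rational subgroup of order $\ell$, and the Galois action on it is given by a character $\psi : G_K \rightarrow \F_\ell^\times$; write $\psi'$ for the character on the quotient, so $\psi \psi' = \chi_\ell$. The key point is to control the ramification of $\psi$. Away from $\ell$, at primes of good reduction $\psi$ is unramified, and at primes of multiplicative reduction the $\ell$-torsion subgroup is, up to the unramified quadratic twist, either the étale or the multiplicative part of the Néron model, so $\psi|_{I_\fq}$ is trivial or equals $\chi_\ell|_{I_\fq}$ (which is itself trivial for $\fq \nmid \ell$). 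At $\lambda \mid \ell$, semistability together with the classification of finite flat group schemes (or the theory of the canonical subgroup / results of Serre, Raynaud) forces $\psi|_{I_\lambda}$ to be either trivial or $\chi_\ell|_{I_\lambda}$; in either case $\psi^{12}|_{I_\lambda}$ is (a power of) the fundamental character of level $1$, so $\psi^{12}$ is controlled.

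The next step is to conclude that $\psi^{12}$ (or some uniformly bounded power) is unramified everywhere, hence cuts out an extension of $K$ inside the Hilbert class field, so $\psi$ has order dividing $12 h_K$, i.e., uniformly bounded. First I would apply the above local analysis to see that the conductor of $\psi^{12}$ is supported only at primes above $\ell$, and then use the explicit description of $\psi^{12}|_{I_\lambda}$ to kill the ramification there as well after raising to a further bounded power — this is exactly the mechanism in Kraus's and David's work. With $\psi$ of bounded order, the point $P$ generating the $\ell$-subgroup generates a point of order $\ell$ on $E$ over an extension $L/K$ of bounded degree; Merel's theorem then bounds $\ell$ in terms of $[L:\Q]$, hence in terms of $[K:\Q]$ alone, and any $\ell$ exceeding this effective bound $C_K$ yields a contradiction, proving irreducibility.

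The main obstacle — and the place where one must be careful rather than merely routine — is the local analysis at $\lambda \mid \ell$: pinning down that, under semistability, $\psi|_{I_\lambda}$ is exactly trivial or $\chi_\ell|_{I_\lambda}$ (and not some wilder character), and tracking exactly which bounded power of $\psi$ becomes unramified. Here one invokes the classification of semistable (Barsotti--Tate up to the relevant twist) $\ell$-divisible groups, as in Serre's treatment for elliptic curves over $\Q$ generalized to the totally real case. Since the statement cites it as a known variant of Kraus, David, and Frey--Siksek, I would simply reference \citep{FSirred,KrausNF,David} for this step; effectivity of $C_K$ is inherited from Merel's theorem together with the effective (class-field-theoretic) bound on the order of $\psi$, and the reduction via quadratic twist shows nothing is lost in passing from "$E$ semistable at $\lambda \mid \ell$" to the existence of a genuine isogeny character.
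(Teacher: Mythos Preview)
The paper does not give a self-contained proof of this theorem: it simply cites \citet[Theorem~2]{FSirred} (stated there for $K$ totally real and Galois) and observes that the general case follows by replacing $K$ with its Galois closure. So your instinct at the end---to just reference \citep{FSirred,KrausNF,David}---is exactly what the paper does, modulo the one-line Galois-closure reduction.

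Your sketch of the underlying argument is broadly faithful to those references, but two points deserve correction. First, the quadratic twist is unnecessary and slightly misleading: if $\overline{\rho}_{E,\ell}$ is reducible then $E$ already has a $K$-rational cyclic $\ell$-subgroup, and the isogeny character $\psi$ is defined without any twist. Second, the step ``$\psi^{12}$ is unramified everywhere'' is precisely where the difficulty lies and your outline glosses over it: for each $\lambda \mid \ell$ one has $\psi|_{I_\lambda} \in \{1, \chi_\ell|_{I_\lambda}\}$, but the choice can vary with $\lambda$, so no fixed power of $\psi$ need be unramified at all of them simultaneously. In the cited works (and in the paper's own Lemma~\ref{lem:tors} for the specific fields at hand) this mixed case is handled by a genuine class-field-theoretic argument---essentially Proposition~\ref{prop:Krauscf}---rather than by a uniform exponent. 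Once that is done, one does obtain a point of order $\ell$ over an extension of bounded degree and Merel finishes the job, as you say.
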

In \citet[Theorem 2]{FSirred} it is assumed that $K$ is
Galois as well as totally real. Theorem~\ref{thm:irred}
follows immediately on replacing $K$ with its Galois closure.

\begin{lem}\label{lem:ll}
Let $E/K$ be the Frey curve given in Section~\ref{sec:FreyCurve}.
Suppose $\overline{\rho}_{E,\ell}$
is irreducible and $E$ is modular. Then
$\overline{\rho}_{E,\ell}\sim \overline{\rho}_{\ff,\lambda}$
for some Hilbert cuspidal eigenform $\ff$ over $K$ of parallel weight $2$
that is new at level $\cN_\ell$, where
\[
\cN_\ell=
\begin{cases}
2 \OO_K & \text{if $p \nmid x$} \\
2 \fp & \text{if $p \mid x$} \, .
\end{cases}
\]
Here $\lambda \mid \ell$ is a prime of $\Q_\ff$, the field
generated over $\Q$ by the eigenvalues of $\ff$.

If $p \equiv 1 \pmod{4}$, let $E^\prime/K^\prime$ be the Frey
curve given in Section~\ref{sec:FreyCurve2}.
Suppose $\overline{\rho}_{E^\prime,\ell}$
is irreducible and $E$ is modular. Then
$\overline{\rho}_{E^\prime,\ell}\sim \overline{\rho}_{\ff,\lambda}$
for some Hilbert cuspidal eigenform $\ff$ over $K$ of parallel weight $2$
that is new at level $\cN^\prime_\ell$, where
\[
\cN^\prime_\ell=
\begin{cases}
2 \sB^2 & \text{if $p \nmid x$} \\
2 \sB & \text{if $p \mid x$} \, .
\end{cases}
\]
\end{lem}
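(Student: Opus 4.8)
The plan is a Ribet-style level-lowering argument carried out over the totally real field $K$ (and over $K^\prime$ for the second part), using the level-lowering theorems over totally real fields of Fujiwara, Jarvis and Rajaei in the packaged form employed in \citep{FS}. Since $E$ is modular, $\overline{\rho}_{E,\ell}$ arises from a Hilbert cuspidal eigenform over $K$ of parallel weight $2$ and level equal to the conductor $\cN_{E/K}$ of Lemmas~\ref{lem:Frey1} and~\ref{lem:Frey2}. Because $\ell\neq p$, the prime $\ell$ is unramified in $K$, and by those same lemmas $E$ has good or multiplicative reduction at every prime above $\ell$; hence $\overline{\rho}_{E,\ell}$ is finite flat at the primes above $\ell$, and level lowering will keep us in parallel weight $2$ while removing the $\ell$-part of the level. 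The hypothesis that $\overline{\rho}_{E,\ell}$ is irreducible is precisely what makes these level-lowering theorems applicable.

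It then remains to decide, prime by prime, which primes of the level can be stripped. At a prime $\fq\mid\Rad(\alpha\gb_j\gb_k)$ the curve $E$ has multiplicative reduction and the exponent of $\fq$ in $\cD_{E/K}$ is $2\ell$ or $4\ell$, hence divisible by $\ell$, so $\overline{\rho}_{E,\ell}$ is unramified at $\fq$ and $\fq$ is removed. At a prime $\lambda\mid\ell$ one has $\lambda\mid\cN_{E/K}$ only if $\lambda\mid\alpha$, in which case the $\lambda$-exponent of $\cD_{E/K}$ equals $4\ell\cdot\ord_\lambda(\alpha)$, again divisible by $\ell$; together with $\ell$ being unramified in $K$ this shows $\overline{\rho}_{E,\ell}$ is finite flat at $\lambda$, so $\lambda$ is removed. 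Finally, at a prime $\fq\mid 2$ we have $\ord_\fq(\cD_{E/K})=4\ell n-4\equiv -4\not\equiv 0\pmod{\ell}$, and when $p\mid x$ at $\fp$ we have $\ord_\fp(\cD_{E/K})=2\delta\equiv -2p\not\equiv 0\pmod{\ell}$, using $\ell\geq 5$ and $\ell\neq p$; so $\overline{\rho}_{E,\ell}$ is genuinely (in fact Steinberg) ramified there and these primes must stay in the level. This also supplies the prime of Steinberg reduction that the level-lowering theorems require when $[K:\Q]$ is even. Assembling these facts gives $\overline{\rho}_{E,\ell}\sim\overline{\rho}_{\ff,\lambda}$ for a Hilbert eigenform $\ff$ new of parallel weight $2$ and level exactly $\cN_\ell$.

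For $p\equiv 1\pmod{4}$ the plan is identical over $K^\prime$ once modularity there is secured. When $p\nmid x$, $E^\prime/K$ is the quadratic twist of $E/K$, hence modular over $K$; when $p\mid x$, $E^\prime=E$. Since $E^\prime$ is defined over $K^\prime$ and $K/K^\prime$ is cyclic of degree $2$, the automorphic representation of $\GL_2$ over $K$ attached to $E^\prime/K$ is $\Gal(K/K^\prime)$-stable, so by cyclic base change for $\GL_2$ it descends to $K^\prime$; thus $\overline{\rho}_{E^\prime,\ell}$ is modular over $K^\prime$. I would then rerun the argument over $K^\prime$ using the conductor computations of Lemmas~\ref{lem:Frey3} and~\ref{lem:Frey4}: the primes dividing $\Rad(\alpha\gb_{j,k})$ and the primes above $\ell$ are stripped as before, a prime above $2$ remains (Steinberg, since its exponent in $\cD_{E^\prime/K^\prime}$ is $4\ell n-4\not\equiv 0\pmod{\ell}$), and the prime $\sB$ above $p$ remains --- with exponent $2$ when $p\nmid x$ (there $E^\prime$ has additive, potentially good reduction with conductor exponent $2$, which level lowering cannot reduce) and with exponent $1$ when $p\mid x$ (there $E^\prime$ has multiplicative reduction with $\ord_\sB(\cD_{E^\prime/K^\prime})=\delta\equiv -p\not\equiv 0\pmod{\ell}$, i.e.\ Steinberg). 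This yields $\overline{\rho}_{E^\prime,\ell}\sim\overline{\rho}_{\ff,\lambda}$ new of level $\cN^\prime_\ell$.

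The hard part is bookkeeping rather than anything deep: one must check that all the local hypotheses of the level-lowering theorems over totally real fields are met --- above all the existence of a prime of Steinberg reduction (furnished by the primes above $2$) and the finite-flat behaviour at the primes above $\ell$ (furnished by $\ell$ being unramified in $K$, resp.\ $K^\prime$) --- and, in the case $p\equiv 1\pmod{4}$, carry out the descent of modularity from $K$ down to $K^\prime$ via cyclic base change for $\GL_2$.
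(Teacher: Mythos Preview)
Your proof is correct and follows essentially the same approach as the paper's: apply the standard level-lowering recipe of Fujiwara--Jarvis--Rajaei (as packaged in \cite[Section 2.3]{FS}) to the conductor/discriminant data of Lemmas~\ref{lem:Frey1}--\ref{lem:Frey4}. The paper's own proof is a two-line citation of exactly these ingredients, so you have simply spelled out the bookkeeping that the authors leave implicit.

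Two small remarks. First, your claim that a prime $\lambda\mid\ell$ divides $\cN_{E/K}$ ``only if $\lambda\mid\alpha$'' is not quite right: one could also have $\lambda\mid\gb_j$ or $\lambda\mid\gb_k$. This does not matter, since in those cases the discriminant exponent at $\lambda$ is $2\ell$, again divisible by $\ell$, so the finite-flat conclusion stands. Second, your cyclic-base-change descent of modularity from $K$ to $K^\prime$ is a genuine addition over the paper's proof of this lemma; the paper handles that point separately (in Lemma~\ref{lem:modularitysmallp}) rather than inside the level-lowering lemma, and the hypothesis ``$E$ is modular'' in the second part of the statement is likely a typo for ``$E^\prime$ is modular''. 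Either reading is fine with your argument.
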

\begin{proof}
This immediate from Lemmas~\ref{lem:Frey1}, \ref{lem:Frey2},
\ref{lem:Frey3} and \ref{lem:Frey4},
and a standard level lowering recipe derived in \cite[Section 2.3]{FS}
from the work of Jarvis, Fujiwara and Rajaei. Alternatively,
one could use modern modularity lifting theorems which
integrate level lowering with modularity lifting, as for example
in \cite{BD}.
\end{proof}

\subsection*{Proof of Theorem~\ref{thm:2}}
Let $K=\Q(\zeta+\zeta^{-1})$ and $E$ be the Frey curve constructed
in Section~\ref{sec:FreyCurve}. Let $C^{(1)}_p$ be the constant
in Lemma~\ref{lem:modularity}, and
$C^{(2)}_p=C_K$ be the constant in Theorem~\ref{thm:irred}. 
Let $C_p=\max(C^{(1)}_p, C^{(2)}_p)$.
Suppose that $\ell$, $m \ge C_p$.
Then $\overline{\rho}_{E,\ell}$ is irreducible and modular,
and it follows from Lemma~\ref{lem:ll} that 
$\overline{\rho}_{E,\ell}\sim \overline{\rho}_{\ff,\lambda}$
for some Hilbert eigenform over $K$ of parallel weight $2$
that is new at level $\cN_\ell$, where $\cN_\ell=2  \OO_K$
or $2  \fp$. 
Now a standard argument (c.f. \citet*[Section 4]{BS}
or \citet[Section 3]{Kra97} or \citet[Section 9]{IHP}) shows that,
after enlarging $C_p$ by an effective amount, we may
suppose that the field of eigenvalues of $\ff$ is $\Q$.
Observe that the level $\cN_\ell$ is non-square-full (meaning there
is a prime $\fq$ at which the level has valuation $1$).
As the level is non-square-full and the field of 
eigenvalue is $\Q$, the eigenform
$\ff$ is known to correspond to some
elliptic curve $E_1/K$ of conductor $\cN_\ell$ \citep{Blasius},
and $\overline{\rho}_{E,\ell} \sim \overline{\rho}_{E_1,\ell}$.
Finally, and again by standard arguments (loc.\ cit.),
we may enlarge $C_p$ by an effective constant so that the isomorphism 
$\overline{\rho}_{E,\ell} \sim \overline{\rho}_{E_1,\ell}$ forces 
$E_1$ to either have full $2$-torsion, or to be isogenous
to an elliptic curve $E_2/K$ that has full $2$-torsion.
This contradicts the hypothesis of Theorem~\ref{thm:2} that there
are no such elliptic curves with conductor $2\OO_K$, $2 \fp$,
and completes the proof of the first part of the theorem.
The proof of the second part is similar, and makes use
of the Frey curve $E^\prime/K^\prime$.

\section{Modularity of the Frey Curve For $5 \le p \le 13$}

\begin{lem}\label{lem:modularitysmallp}
If $p=5$, $7$, $11$ or $13$ then the Frey curve $E/K$
in Section~\ref{sec:FreyCurve} is modular.
If $p=5$, $13$ then the Frey curve $E^\prime/K^\prime$
in Section~\ref{sec:FreyCurve2}
is modular.
\end{lem}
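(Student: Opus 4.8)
The plan is to apply Theorem~\ref{thm:modularity} (and, for the descent to subfields $K'$, Langlands' cyclic base change together with a twisting argument) directly to the fields that arise. For $p=5$, $7$, $11$, $13$ the field $K=\Q(\zeta_p)^+$ has conductor $p<100$ with $5\nmid p$ and $p\notin\{29,87,89\}$, so Corollary~\ref{cor:modularity} applies verbatim and gives modularity of every semistable elliptic curve over $K$, in particular of the Frey curve $E/K$ (which is semistable by Lemmas~\ref{lem:Frey1} and~\ref{lem:Frey2}). So the first assertion of the lemma is essentially immediate from Corollary~\ref{cor:modularity}.

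For the second assertion we must handle $E'/K'$ where $K'$ is the subfield of $K=\Q(\zeta_p)^+$ of degree $(p-1)/4$; this is $K'=\Q$ when $p=5$ and $K'=\Q(\sqrt{13})$ when $p=13$. The difficulty is that $E'$ is only known to be semistable away from $\sB$ (Lemma~\ref{lem:Frey3}) in Case~I, so Corollary~\ref{cor:modularity} does not directly apply to $E'$. The natural route is to pass back up to $K$: by Lemma~\ref{lem:Frey3} in Case~I the curve $E'/K$ is the quadratic twist of the semistable curve $E/K$ by $(\theta_k-2)$, and in Case~II (Lemma~\ref{lem:Frey4}) simply $E'=E$ over $K$; in either case $E'$ becomes modular over $K$ once $E/K$ is, since modularity is invariant under quadratic twist. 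Then, because $K/K'$ is cyclic (the Galois group of $K/\Q$ is cyclic, hence so is that of $K/K'$), Langlands' cyclic base change theorem \citep{Langlands} descends modularity of $E'$ from $K$ to $K'$ — this is exactly the mechanism already used in the proof of Corollary~\ref{cor:modularity}. For $p=5$ one can alternatively just invoke modularity of elliptic curves over $\Q$ (Wiles, Breuil--Conrad--Diamond--Taylor).

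The one point that requires a little care is $p=13$, where $K=\Q(\zeta_{13})^+$ has degree $6$ and $K'=\Q(\sqrt{13})$ has degree $2$. Here I would double-check that $13$ is indeed in the range covered by Corollary~\ref{cor:modularity} — it is, since $13<100$, $5\nmid 13$, and $13\notin\{29,87,89\}$ — so $E/K$ is modular, and then the twist-plus-base-change argument yields modularity of $E'/K'$. No new modularity input beyond Corollary~\ref{cor:modularity} and cyclic base change is needed; the proof is just an assembly of these with the explicit descriptions of the Frey curves in Section~\ref{sec:FreyCurve2}.

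The main (very mild) obstacle is bookkeeping rather than mathematics: one must be sure that in Case~I the quadratic twist relating $E'/K$ and $E/K$ is correctly identified so that modularity transfers, and that the extension $K/K'$ is genuinely cyclic so base change applies. Both are immediate from the cyclicity of $\Gal(K/\Q)$ and from Lemma~\ref{lem:Frey3}. So I expect the proof to be short: cite Corollary~\ref{cor:modularity} for $E/K$, note invariance of modularity under quadratic twist to get $E'/K$ modular, then apply cyclic base change to descend to $K'$.
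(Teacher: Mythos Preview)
Your overall strategy matches the paper's: modularity of $E/K$ first, then quadratic twist plus cyclic base change to get $E'/K'$. However, there is one genuine gap and one factual slip.

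The gap is at $p=5$. You claim Corollary~\ref{cor:modularity} applies because ``$5\nmid p$'', but of course $5\mid 5$: the conductor of $K=\Q(\zeta_5)^+=\Q(\sqrt{5})$ is $n=5$, so the hypothesis $5\nmid n$ of Corollary~\ref{cor:modularity} (equivalently, condition~(a) of Theorem~\ref{thm:modularity} that $5$ be unramified in $K$) fails outright. Neither the theorem nor the corollary says anything about semistable curves over $\Q(\sqrt{5})$. The paper handles $p=5$ separately by invoking the modularity theorem for elliptic curves over real quadratic fields due to Freitas--Le~Hung--Siksek; for $p=7,11,13$ it checks the hypotheses of Theorem~\ref{thm:modularity} directly (equivalently, your appeal to Corollary~\ref{cor:modularity} is fine for these three values). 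For $E'/\Q$ when $p=5$ the paper simply cites modularity over $\Q$ (BCDT), as you also suggest as an alternative.

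The factual slip is your identification of $K'$ for $p=13$. Since $[K:\Q]=(p-1)/2=6$, the subfield $K'$ has degree $(p-1)/4=3$, so $K'$ is the real cubic subfield of $\Q(\zeta_{13})$, not $\Q(\sqrt{13})$. This does not damage the argument---$K/K'$ is still cyclic (in fact quadratic), so Langlands' base change still applies exactly as you describe---but the description should be corrected.
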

\begin{proof}
Recall that $E$ is defined over $K=\Q(\zeta+\zeta^{-1})$
where $\zeta$ is a primitive $p$-th root of unity. 
If $p=5$ then $K=\Q(\sqrt{5})$, and modularity
of elliptic curves over real quadratic fields was recently 
established by \citet*{FLS}.

For $p=7$, $11$, $13$, the prime $5$ is unramified in $K$,
the class number of $K$
is $1$, and condition (c) of Theorem~\ref{thm:modularity} is easily
verified. Thus $E$ is modular.

For $p=13$, the curves $E$ and $E^\prime$ are at 
worst quadratic twists over $K$,
and $K/K^\prime$ is quadratic. The modularity of $E^\prime/K^\prime$
follows from the modularity of $E/K$
and the cyclic base change theorem of \citep{Langlands}.
For $p=5$ we could use the same argument, or more simply note
that $K^\prime=\Q$, and conclude by the modularity
theorem over the rationals \citep*{BCDT}.
\end{proof}

\section{Irreducibility of $\overline{\rho}_{E,\ell}$ for $5 \le p \le 13$}\label{sec:Irred}
We let $E$ be the Frey curve as in Section~\ref{sec:FreyCurve},
and $p=5$, $7$, $11$, $13$. To
apply Lemma~\ref{lem:ll} we need to prove the
irreducibility of $\overline{\rho}_{E,\ell}$ for $\ell \ge 5$;
equivalently, we need to show that $E$ does not have an $\ell$
isogeny for $\ell \ge 5$. Alas, there is not yet a uniform boundedness
theorem for isogenies. 
The papers of \cite{KrausNF},  \cite{David}, \cite{FSirred}
do give effective bounds $C_K$ such that for $\ell>C_K$ the representation
$\overline{\rho}_{E,\ell}$ is irreducible, however these bounds 
are too large for our present purpose. We will refine the arguments
in those papers making use of the fact that the curve $E$ is semistable,
and the number fields $K=\Q(\zeta+\zeta^{-1})$ all have narrow class number $1$.
Before doing this, we relate, for $p=5$ and $13$,
 the representations $\overline{\rho}_{E,\ell}$
and $\overline{\rho}_{E^\prime,\ell}$ where $E^\prime$ is the Frey
curve in Section~\ref{sec:FreyCurve2}.
\begin{lem}\label{lem:compare}
Suppose $p=5$ or $13$. Let $\tau$ be the unique
involution of $K$ and $K^\prime$ the subfield fixed by it.
Let $j$ and $k$ satisfy $\tau(\theta_j)=\theta_k$. Let
$E/K$ be the Frey elliptic curve in Section~\ref{sec:FreyCurve}
and $E^\prime/K^\prime$ the 
Frey curve in Section~\ref{sec:FreyCurve2}, associated to this
pair $(j,k)$. Then 
$\overline{\rho}_{E,\ell}$ is irreducible as a representation
of $G_K$ if and only if 
$\overline{\rho}_{E^\prime,\ell}$ is irreducible as a representation
of $G_{K^\prime}$.
\end{lem}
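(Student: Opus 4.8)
The plan is to reduce the assertion to a statement about the index-$2$ subgroup $G_K \subseteq G_{K^\prime}$ and then to rule out the single bad configuration using multiplicative reduction above $2$. First I would pin down the relation between the two Frey curves. In Case~II ($p \mid x$) the curve $E^\prime$ is simply $E$ regarded over $K^\prime$, so $\overline{\rho}_{E,\ell} = \overline{\rho}_{E^\prime,\ell}\vert_{G_K}$; in Case~I ($p \nmid x$), Section~\ref{sec:FreyCurve2} shows that $E^\prime_{/K}$ is the quadratic twist of $E/K$ by $\theta_k - 2$, hence $\overline{\rho}_{E^\prime,\ell}\vert_{G_K} \cong \overline{\rho}_{E,\ell} \otimes \varepsilon$ for a quadratic character $\varepsilon$ of $G_K$. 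Since twisting by a character does not change (ir)reducibility, in both cases $\overline{\rho}_{E,\ell}$ is irreducible as a $G_K$-representation if and only if $\overline{\rho}_{E^\prime,\ell}\vert_{G_K}$ is. So the lemma is equivalent to the following: $\overline{\rho}_{E^\prime,\ell}$ is irreducible over $G_{K^\prime}$ if and only if its restriction to $G_K$ is irreducible.

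One implication is trivial, since a $G_{K^\prime}$-stable line is \emph{a fortiori} $G_K$-stable. For the converse I would argue by contradiction. Suppose $\overline{\rho}_{E^\prime,\ell}$ is irreducible over $G_{K^\prime}$ while $\overline{\rho}_{E^\prime,\ell}\vert_{G_K}$ is reducible, say with $G_K$-stable line $L \subseteq E^\prime[\ell] \otimes \overline{\F}_\ell$. For $\sigma \in G_{K^\prime} \setminus G_K$ the line $\sigma L$ is again $G_K$-stable (as $G_K \trianglelefteq G_{K^\prime}$), and irreducibility over $G_{K^\prime}$ forces $\sigma L \neq L$, so $E^\prime[\ell]\otimes\overline{\F}_\ell = L \oplus \sigma L$ and $\overline{\rho}_{E^\prime,\ell} \cong \mathrm{Ind}_{G_K}^{G_{K^\prime}}\chi$, where $\chi \colon G_K \to \overline{\F}_\ell^\times$ is the character through which $G_K$ acts on $L$. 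The image of such an induced representation lies in the normalizer of a maximal torus of $\GL_2(\overline{\F}_\ell)$; since $\ell$ is odd, this normalizer contains no non-trivial unipotent element, so the (finite) image has order prime to $\ell$, and Maschke's theorem then forces the restriction of $\overline{\rho}_{E^\prime,\ell}$ to every subgroup of $G_{K^\prime}$ to be semisimple.

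The contradiction comes from a prime $\fq$ of $K^\prime$ above $2$. Since $p$ is odd, $2$ is unramified in $K^\prime$, and by Lemma~\ref{lem:Frey3} in Case~I (resp.\ Lemma~\ref{lem:Frey4} in Case~II) the curve $E^\prime$ has multiplicative reduction at $\fq$ with $\ord_\fq(\Delta_{E^\prime}) = 4\ell n - 4$, which is prime to $\ell$ because $\ell \ge 5$ and $\ell \nmid \ell n - 1$. By the theory of the Tate curve (and because $\fq \nmid \ell$) the representation $\overline{\rho}_{E^\prime,\ell}\vert_{I_\fq}$ is then a non-trivial unipotent representation, in particular not semisimple, contradicting the previous paragraph. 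This completes the converse and hence the lemma; the same argument shows incidentally that, for our Frey curves, irreducibility over $\F_\ell$ and over $\overline{\F}_\ell$ coincide, so the statement is insensitive to this choice.

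I expect the converse direction to be the only real difficulty: the crucial observation is that irreducibility can be lost on passing from $G_K$ to $G_{K^\prime}$ \emph{only} if $\overline{\rho}_{E^\prime,\ell}$ is induced from $G_K$, and that this possibility is incompatible with the very explicit multiplicative reduction of the Frey curve above $2$ (Lemmas~\ref{lem:Frey3} and~\ref{lem:Frey4}), whose discriminant exponent there depends on the solution $(x,y,z)$ only through the harmless parameter $n$. The remaining ingredients — the twisting reduction of the first paragraph and the Tate-curve computation — are routine.
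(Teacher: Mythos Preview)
Your argument is correct and shares with the paper its decisive ingredient: both proofs reduce to showing that $\overline{\rho}_{E',\ell}\vert_{G_K}$ cannot become reducible while $\overline{\rho}_{E',\ell}$ remains irreducible over $G_{K'}$, and both rule this out by exploiting multiplicative reduction of $E'$ at a prime $\fq\mid 2$ with $\ell\nmid\ord_\fq(j(E'))$, which via the Tate parametrisation forces an element of order~$\ell$ into the image.

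The group-theoretic deduction from this fact differs, however. The paper combines the element of order~$\ell$ with irreducibility over $G_{K'}$ and Dickson's classification to conclude that $\overline{\rho}_{E',\ell}(G_{K'})\supseteq\SL_2(\F_\ell)$; simplicity of $\SL_2(\F_\ell)$ then forces this subgroup into the index-$2$ image $\overline{\rho}_{E',\ell}(G_K)$, giving irreducibility there. You instead observe directly that the bad configuration (irreducible over $G_{K'}$, reducible over $G_K$) means $\overline{\rho}_{E',\ell}$ is induced from a character of $G_K$, hence lands in the normaliser of a split torus, which contains no element of order~$\ell$ when $\ell$ is odd. Your route is a little more elementary in that it bypasses Dickson's classification; the paper's route yields the stronger byproduct that the image in fact contains $\SL_2(\F_\ell)$. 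Either way the contradiction is the same, and the remaining steps (twisting reduction, the easy implication) are identical in the two arguments.
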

\begin{proof}
Note that $K/K^\prime$ is a quadratic extension and $E/K$ is a quadratic
twist of $E^\prime/K$.  Thus $\overline{\rho}_{E,\ell}$ is a twist
of $\overline{\rho}_{E^\prime,\ell} \vert_{G_K}$ by a quadratic character.
If $\overline{\rho}_{E^\prime,\ell}$ is reducible as a
representation of $G_{K^\prime}$ then certainly
$\overline{\rho}_{E,\ell}$ is reducible as a representation of $G_K$.

Conversely, suppose $\overline{\rho}_{E^\prime,\ell}(G_{K^\prime})$ is
irreducible. We would like to show that
$\overline{\rho}_{E,\ell}(G_K)$ is irreducible.
It is enough to show that $\overline{\rho}_{E^\prime,\ell}(G_K)$
is irreducible. Let $\fq \mid 2$ be a prime of $K^\prime$. 
Then
$\ord_\fq(j(E^\prime))=4-4\ell n$ which is negative but not divisible
by $\ell$. Thus $\overline{\rho}_{E^\prime,\ell}(G_{K^\prime})$
contains an element of order $\ell$ \citep[Proposition V.6.1]{SilII}.
By Dickson's classification \citep{SwD} 
of subgroups of $\GL_2(\F_\ell)$ we see
that $\overline{\rho}_{E^\prime,\ell}(G_{K^\prime})$
must contain $\SL_2(\F_\ell)$. The latter is a simple group,
and must therefore be contained in 
$\overline{\rho}_{E^\prime,\ell}(G_K)$. This completes the proof.
\end{proof}

\begin{lem}\label{lem:tors}
Suppose $\overline{\rho}_{E,\ell}$ is reducible. Then
either $E/K$ has non-trivial $\ell$-torsion, or is $\ell$-isogenous
to an elliptic curve defined over $K$ that has non-trivial
$\ell$-torsion.
\end{lem}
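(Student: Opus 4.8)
The plan is to translate reducibility of $\overline{\rho}_{E,\ell}$ into the existence of a $G_K$-rational isogeny character, and then to kill that character (or its companion) using semistability of $E$, the class field theory behind Proposition~\ref{prop:Krauscf}, and the explicit units of $K=\Q(\zeta+\zeta^{-1})$.

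Reducibility of $\overline{\rho}_{E,\ell}$ gives a $G_K$-stable line $W\subset E[\ell]$; write $\theta:G_K\to\F_\ell^\times$ for the character through which $G_K$ acts on $W$, and set $E'=E/W$, an elliptic curve $\ell$-isogenous to $E$ over $K$. Comparing with the dual isogeny and using $\det\overline{\rho}_{E,\ell}=\chi_\ell$, one sees that $E'[\ell]$ contains a $G_K$-stable line on which $G_K$ acts by $\theta'=\chi_\ell\theta^{-1}$. Hence $E$ (resp.\ $E'$) has a non-trivial $K$-rational point of order $\ell$ precisely when $\theta$ (resp.\ $\theta'$) is trivial, so it suffices to show that $\theta=1$ or $\theta'=1$.

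Next I would use semistability of $E$ (Lemmas~\ref{lem:Frey1},~\ref{lem:Frey2}) to constrain the ramification of $\theta$ and $\theta'$. At a prime $\fq\nmid\ell$ the inertia $I_\fq$ acts on $E[\ell]$ through a unipotent subgroup (trivially at good reduction; unipotently at multiplicative reduction, since $\chi_\ell$ is unramified at $\fq$), so $\theta$ and $\theta'$ are unramified at $\fq$. At $\fq\mid\ell$ — note $\ell$ is unramified in $K$, as $\ell\ne p$ — the curve $E$ cannot have good supersingular reduction, for that would force $\overline{\rho}_{E,\ell}\vert_{I_\fq}$ to be irreducible by \citet[Proposition 12]{Serre72}, contrary to reducibility; so $E$ has good ordinary or multiplicative reduction at $\fq$, whence $\theta\vert_{I_\fq}$ and $\theta'\vert_{I_\fq}$ equal $1$ and $\chi_\ell\vert_{I_\fq}$ in some order. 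Put $S=\{\fq\mid\ell:\theta\vert_{I_\fq}=\chi_\ell\vert_{I_\fq}\}$, so that $\theta'\vert_{I_\fq}=\chi_\ell\vert_{I_\fq}$ exactly for $\fq\in S_\ell\setminus S$. Proposition~\ref{prop:Krauscf} (with $q=\ell$), applied to $\theta$ and the set $S$ and again to $\theta'$ and the set $S_\ell\setminus S$, then gives, for every totally positive unit $u\in\OO_K^\times$, that $\prod_{\fq\in S}\norm_{\F_\fq/\F_\ell}(u\bmod\fq)=\overline{1}$ and $\prod_{\fq\in S_\ell\setminus S}\norm_{\F_\fq/\F_\ell}(u\bmod\fq)=\overline{1}$.

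The crux is to conclude that $S=\emptyset$ or $S=S_\ell$. Since $K=\Q(\zeta+\zeta^{-1})$ has narrow class number $1$, every totally positive unit of $\OO_K$ is a square, so the two displayed relations are equivalent to saying that $\prod_{\fq\in S}\norm_{\F_\fq/\F_\ell}(v\bmod\fq)$ and $\prod_{\fq\in S_\ell\setminus S}\norm_{\F_\fq/\F_\ell}(v\bmod\fq)$ lie in $\{\overline{1},-\overline{1}\}$ for every unit $v\in\OO_K^\times$. By Sinnott's index formula (as in the proof of Corollary~\ref{cor:modularity}) the unit group of $K$ is generated by cyclotomic units, among which are the $\theta_j$, and here one has the extra feature that the small translates $\theta_j-1=\zeta^{-j}(\zeta^{2j}-\zeta^j+1)$ and $\theta_j+1=\zeta^{-j}(\zeta^{2j}+\zeta^j+1)$ are again units (each being, up to a power of $\zeta$, a value of $\Phi_3$ or $\Phi_6$ at a primitive $p$th root of unity, and $p\notin\{2,3\}$). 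Feeding these units into the above relations, one checks for each $p\in\{5,7,11,13\}$ that no proper non-empty subset $S\subsetneq S_\ell$ can occur, whatever the prime $\ell$. Once $S=\emptyset$, the character $\theta$ is unramified at every finite prime, hence factors through the narrow class group of $K$, which is trivial; thus $\theta=1$ and $E$ has a non-trivial $K$-rational point of order $\ell$. If instead $S=S_\ell$, the same reasoning applied to $\theta'$ gives $\theta'=1$, so the $\ell$-isogenous curve $E'/K$ has a non-trivial $K$-rational point of order $\ell$. In either case the conclusion follows. The main obstacle is precisely this last verification: the congruences produced by Proposition~\ref{prop:Krauscf} are satisfiable in the abstract, so eliminating proper $S$ genuinely requires the arithmetic of the cyclotomic units, and — unlike in Corollary~\ref{cor:modularity}, which concerns only the single prime $5$ — it must be carried out uniformly over all primes $\ell$.
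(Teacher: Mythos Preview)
Your overall structure is the same as the paper's: reducibility gives characters $\theta,\theta'=\chi_\ell\theta^{-1}$; semistability makes them unramified away from $\ell$; at $\fq\mid\ell$ each restricts to $1$ or $\chi_\ell$ on inertia; Proposition~\ref{prop:Krauscf} constrains the set $S$; and narrow class number $1$ finishes once $S\in\{\emptyset,S_\ell\}$. The gap is the step you yourself flag as the ``main obstacle''. You assert that ``one checks for each $p\in\{5,7,11,13\}$ that no proper non-empty subset $S\subsetneq S_\ell$ can occur, whatever the prime $\ell$'', but you give no mechanism for doing this uniformly in $\ell$: the residue fields $\F_\fq$, and even the splitting type of $\ell$ in $K$, vary with $\ell$, so feeding in your units $\theta_j,\theta_j\pm1$ produces, for each $\ell$, a different system of congruences in $\F_\ell^\times$. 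The paper supplies the missing idea. Fix $\lambda\mid\ell$ with decomposition group $D\le G=\Gal(K/\Q)$; a set of coset representatives $T$ for $G/D$ indexes $S_\ell$, so $S$ corresponds to some $T'\subset T$. A short Galois manipulation shows that $\prod_{\fq\in S}\norm_{\F_\fq/\F_\ell}(u\bmod\fq)\equiv\prod_{\tau\in T',\,\sigma\in D}u^{\sigma\tau}\pmod\lambda$, so the relation from Proposition~\ref{prop:Krauscf} becomes $\ell\mid B_{T',D}(u):=\norm_{K/\Q}\bigl((\prod_{\tau\in T',\,\sigma\in D}u^{\sigma\tau})-1\bigr)$, a \emph{fixed rational integer}. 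Since $D$ ranges over the finitely many subgroups of the cyclic group $G$ and $T'$ over the proper non-empty subsets of $G/D$, this reduces the problem to a finite computation independent of $\ell$.

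Moreover, your claim that the check succeeds ``whatever the prime $\ell$'' is not quite right. The paper's computation gives $B_{T',D}=1$ in all cases for $p=5,7$, but for $p=11$ one of the values is $23$ and for $p=13$ one obtains $5^2$ and $3^5$; so the general argument leaves $(p,\ell)=(11,23)$ and $(13,5)$ open. These are then handled by a direct check: with $\ell$ now fixed, for each proper non-empty $S\subset S_\ell$ one exhibits a totally positive unit violating the product relation. Your proposed units are reasonable input for such computations, and your observation that totally positive units of $K$ are squares is correct, but neither substitutes for this two-stage procedure.
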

\begin{proof}
Suppose $\overline{\rho}_{E,\ell}$ is reducible, and write
\[
\overline{\rho}_{E,\ell}\sim 
\begin{pmatrix}
\psi_1 & * \\
0 & \psi_2 
\end{pmatrix} \, .
\]
We shall show that either $\psi_1$ or $\psi_2$ is trivial.
It follows in the former case that $E$ has non-trivial 
$\ell$-torsion, and in the latter case that the $K$-isogenous
curve $E/\Ker(\psi_1)$ has non-trivial $\ell$-torsion.

As $K$ has narrow class number $1$ for $p=5$, $7$, $11$, $13$,
it is sufficient to show that one of $\psi_1$, $\psi_2$ is
unramified at all finite places. 
As $E$ is semistable, the characters $\psi_1$ and $\psi_2$
are unramified away from $\ell$ and the infinite places.
Let $S_\ell$ be the set of primes $\lambda \mid \ell$ of $K$. 
Let $S \subset S_\ell$ for the set of $\lambda \in S_\ell$
such that $\psi_1$ is ramified at $\lambda$. Then (c.f.
proof of Theorem~\ref{thm:modularity}) $\psi_2$
is ramified exactly at the primes $S\setminus S_\ell$. Moreover,
$\psi_1 \vert_{I_\lambda}= \chi_\ell \vert_{I_\lambda}$
for all $\lambda \in S$, and
$\psi_2 \vert_{I_\lambda}= \chi_\ell \vert_{I_\lambda}$
for all $\lambda \in S_\ell \setminus S$.
It is enough to show that either $S$ is empty  or $S_\ell \setminus S$ is empty.

Suppose $S$ is a non-empty proper subset of $S_\ell$.
Fix $\lambda \in S$ and let $D=D_\lambda \subset G=\Gal(K/\Q)$ be the 
decomposition group of $\lambda$; by definition 
$\lambda^\sigma=\lambda$ for all $\sigma \in D_\lambda$. 
As $K/\Q$ is abelian and Galois, $D_{\lambda^\prime}=D$
for all $\lambda^\prime \in S_\ell$, and $G/D$ acts
transitively and freely on $S_\ell$.
Fix a set $T$ of coset representatives for $G/D$.
Then there is a subset $T^\prime \subset T$
such that 
\[
S=\{ \lambda^{\tau^{-1}}\; :\; \tau \in T^\prime\},
\qquad
S_\ell\setminus S=\{ \lambda^{\tau^{-1}} \; : \; \tau \in T \setminus T^\prime\}.
\]
As $S$ is a non-empty proper subset of $S_\ell$, we have that
$T^\prime$ is a non-empty proper subset of $T$.
Now, by Proposition~\ref{prop:Krauscf}, for any totally positive unit $u$
of $\OO_K$,
\[
\prod_{\tau \in T^\prime} \norm_{\F_\lambda/\F_\ell} (u+\lambda^{\tau^{-1}})
=\overline{1} \, .
\]
But
\begin{equation*}
\begin{split}
\norm_{\F_\lambda/\F_\ell} (u+\lambda^{\tau^{-1}})
&=\prod_{\sigma \in D} (u+\lambda^{\tau^{-1}})^\sigma\\
&=\prod_{\sigma \in D} (u^\sigma+\lambda^{\tau^{-1}})\\
&= \left( 
\prod_{\sigma \in D} (u^{\sigma \tau}+\lambda)
\right)^{\tau^{-1}} \\
&= 
\prod_{\sigma \in D} (u^{\sigma \tau}+\lambda)
\qquad \text{(as this expression belongs to $\F_\ell$).} 
\end{split}
\end{equation*}
Let
\[
B_{T^\prime,D}(u)=\norm_{K/\Q}\left( \left(\prod_{\tau \in T^\prime,~\sigma \in D} u^{\sigma \tau}
\right) -1 \right) \, .
\]
It follows that $\ell \mid B_{T^\prime,D}(u)$. Now let $u_1,\dotsc,u_d$ be a system
of totally positive units. Then $\ell$ divides 
\[
B_{T^\prime,D}(u_1,\dotsc,u_d)=\gcd \left( B_{T^\prime,D}(u_1),\dotsc,B_{T^\prime,D}(u_d)  \right).
\]
To sum up, if the lemma is false for $\ell$, then there is some subgroup $D$ of $G$
and some non-empty proper subset $T^\prime$ of $G/D$ such that $\ell$ divides $B_{T^\prime,D}(u_1,\dotsc,u_d)$.

The proof of the lemma is completed by a computation that we now describe.
For each of
$p=5$, $7$, $11$, $13$ we fix a basis 
$u_1,\dotsc,u_d$ for the system of totally positive units
of $\OO_K$. 
We run through the subgroups $D$ of $G=\Gal(K/\Q)$.
For each subgroup $D$ we fix a set of coset representatives $T$, and run
through the non-empty proper subsets $T^\prime$ of $T$, computing $B_{T^\prime,D}(u_1,\dotsc,u_d)$.
We found that for $p=5$, $7$ the possible values for $B_{T^\prime,D}(u_1,\dotsc,u_d)$ are all $1$; 
for $p=11$ they are $1$ and $23$; and for $p=13$ they are $1$, $5^2$ and $3^5$. Thus the proof
is complete for $p=5$, $7$ and it remains to deal with $(p,\ell)=(11,23)$, $(13,5)$.
For each of these possibilities we ran through the non-empty proper
$S \subset S_\ell$ and checked that there is some totally positive unit
$u$ such that $\prod_{\lambda \in S} \norm(u+\lambda) \ne \overline{1}$.
This completes the proof.
\end{proof}

Suppose $\overline{\rho}_{E,\ell}$ is reducible. It follows
from Lemma~\ref{lem:tors} that there is an elliptic curve
$E_1/K$ (which is either $E$ or $\ell$-isogenous to $E$)
such that $E_1(K)$ has a subgroup isomorphic to 
$\Z/2\Z\times \Z/{2\ell \Z}$. Such an elliptic curve
is isogenous~\footnote{At the suggestion
of one of the referees we prove this statement.
Let $P_1$, $P_2 \in E_1(K)$ be independent points of order $2$.
Let $Q$ be a solution to the equation $2X=P_1$. Then
$Q$ has order $4$ and the complete set of solutions is
$\{ Q, Q+P_2, 3Q, 3Q+P_2\}$ which is Galois-stable.
Let $E_2=E_1/\langle P_2 \rangle$ and let $\phi: E_1 \rightarrow E_2$
be the induced isogeny. As $\Ker(\phi) \cap \langle Q \rangle=0$,
we see that $Q^\prime=Q+\langle P_2 \rangle$ has order $4$.
Moreover, the set $\{ Q^\prime, 3 Q^\prime\}$ is Galois-stable,
so $\langle Q^\prime \rangle$ is a $K$-rational cyclic subgroup
of order $4$ on $E_2$. The point of order $\ell$ on $E_1$
survives the isogeny, and so $E_2$ has a $K$-rational cyclic
subgroup
of order $4\ell$.}
 to an elliptic curve $E_2/K$ with
a $K$-rational cyclic subgroup isomorphic to $\Z/4\ell \Z$.
Thus we obtain a non-cuspidal $K$-point on the curves
$X_0(\ell)$, $X_1(\ell)$, $X_0(2\ell)$, $X_1(2\ell)$,
$X_0(4\ell)$, $X_1(2,2\ell)$. To achieve a contradiction
it is enough to show that there are no non-cuspidal $K$-points
on one of these curves. For small values of $\ell$, we have
found \texttt{Magma}'s \lq small modular curves package\rq,
as well as \texttt{Magma}'s functionality for computing Mordell--Weil
groups of elliptic curves over number fields invaluable.
Four of the modular
curves of interest to us happen to be elliptic curves.
The aforementioned \texttt{Magma} package gives
the following models:
\begin{gather}
\label{eqn:x020}
X_0(20) \; : \; 
y^2 = x^3 + x^2 + 4x + 4 \qquad \text{(Cremona label \texttt{20a1})},\\
X_0(14) \; : \;
y^2 + xy + y = x^3 + 4x - 6 \qquad \text{(Cremona label \texttt{14a1})},\\
\label{eqn:x011}
X_0(11) \; : \;
y^2 + y = x^3 - x^2 - 10x - 20 \qquad \text{(Cremona label \texttt{11a1})},\\
X_0(19) \; : \;
y^2 + y = x^3 + x^2 - 9x - 15 \qquad \text{(Cremona label \texttt{19a1})} .
\end{gather}

\begin{lem}\label{lem:35}
Let $p=5$. Then $\overline{\rho}_{E,\ell}$ is irreducible.
Moreover, $\overline{\rho}_{E^\prime,\ell}$ is irreducible.
\end{lem}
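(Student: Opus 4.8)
The plan is to derive a contradiction from the existence of too much torsion, separating off the case $\ell=5$. For $p=5$ we have $K=\Q(\sqrt5)$, which has narrow class number $1$, so Lemma~\ref{lem:tors} applies, and (as recorded in its proof) there are no exceptional primes when $p=5$. So suppose $\overline{\rho}_{E,\ell}$ is reducible for some prime $\ell\ge5$. Since $E$ has full $2$-torsion over $K$ (Section~\ref{sec:FreyCurve2}, Remark), the discussion following Lemma~\ref{lem:tors} produces an elliptic curve $E_1/K$ with $\Z/2\Z\times\Z/2\ell\Z\hookrightarrow E_1(K)$, together with an elliptic curve $E_2/K$ carrying a $K$-rational cyclic subgroup of order $4\ell$; equivalently, non-cuspidal $K$-points on $X_1(2,2\ell)$ and on $X_0(4\ell)$.

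First I would dispose of $\ell\ge7$. By the complete classification of torsion subgroups of elliptic curves over quadratic fields (Kenku--Momose and \citet{Kamienny}), every such torsion subgroup lies in an explicit finite list, and once $\ell\ge7$ no group in that list contains $\Z/2\Z\times\Z/2\ell\Z$ (the groups of shape $\Z/2\Z\times\Z/2n\Z$ that occur all have $n\le6$, and the cyclic members of the list cannot contain $(\Z/2\Z)^2$). This contradicts $\Z/2\Z\times\Z/2\ell\Z\hookrightarrow E_1(K)$, so $\overline{\rho}_{E,\ell}$ is irreducible for all primes $\ell\ge7$.

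The delicate case is $\ell=5$: here $\Z/2\Z\times\Z/10\Z$ genuinely occurs as a torsion subgroup over some quadratic fields, so no purely qualitative argument is available, and I would instead use the curve $E_2$. It provides a non-cuspidal $K$-point on $X_0(20)$, which by \eqref{eqn:x020} is the elliptic curve \texttt{20a1}. The plan is to compute the Mordell--Weil group of \texttt{20a1} over $K=\Q(\sqrt5)$ in \texttt{Magma}: one checks that the rank is $0$, so that $X_0(20)(K)$ consists of exactly six points, and that these are precisely the six cusps of $X_0(20)$ (all of which are rational over $\Q$). Hence $X_0(20)$ has no non-cuspidal $K$-point, a contradiction, completing the proof that $\overline{\rho}_{E,\ell}$ is irreducible for every prime $\ell\ge5$.

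For the statement about $E^\prime$, observe that for $p=5$ the subfield $K^\prime$ fixed by the involution of $K$ is $\Q$. By Lemma~\ref{lem:compare}, $\overline{\rho}_{E,\ell}$ is irreducible as a representation of $G_K$ if and only if $\overline{\rho}_{E^\prime,\ell}$ is irreducible as a representation of $G_{K^\prime}$, so irreducibility of $\overline{\rho}_{E^\prime,\ell}$ follows from the first part. The main obstacle is exactly the case $\ell=5$: unlike $\ell\ge7$, it cannot be settled by the quadratic torsion classification and forces the explicit determination of $X_0(20)(\Q(\sqrt5))$, together with the (routine but necessary) verification that all six of its rational points are cusps.
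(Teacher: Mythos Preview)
Your argument is correct, but you have overlooked the standing assumption $\ell \ne p$ made at the start of Section~\ref{sec:Descent}, where the case $\ell = p$ (and $m = p$) was reduced to signature $(p,p,2)$ and disposed of via Darmon--Merel. With $p=5$ this immediately excludes $\ell=5$, so once Kamienny's classification forces $\ell \le 5$ you are already done: there is no ``delicate case'' left, and the explicit determination of $X_0(20)(\Q(\sqrt5))$ is unnecessary. The paper's proof is precisely your $\ell\ge 7$ step together with this observation, followed by Lemma~\ref{lem:compare} for $E'$, exactly as you do. So your route is the same as the paper's apart from the superfluous $\ell=5$ branch; the extra computation is harmless but can be deleted.
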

\begin{proof}
Suppose $\overline{\rho}_{E,\ell}$ is reducible
. By the above there is an elliptic 
curve $E_2$ over the quadratic field $K=\Q(\sqrt{5})$, 
with a $K$-rational subgroup isomorphic to 
$\Z/2\Z \times \Z/{2 \ell \Z}$.
From classification of torsion
subgroups of elliptic curves over quadratic fields 
\citep{Kamienny}
we deduce that $\ell \le  5$. However we are assuming
throughout that $\ell \ge 5$ and $\ell \ne p$.
This gives a contradiction as $p=5$.
Thus $\overline{\rho}_{E,\ell}$
is irreducible. The irreducibility of $\overline{\rho}_{E^\prime,\ell}$
follows from Lemma~\ref{lem:compare}.
\end{proof}

\begin{lem}\label{lem:p7}
Let $p=7$. Then $\overline{\rho}_{E,\ell}$ is irreducible.
\end{lem}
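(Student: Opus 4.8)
The plan is to mimic the proof of Lemma~\ref{lem:35}, replacing the classification of torsion over quadratic fields by the one over cubic fields, and then to dispose of the single prime that classification leaves open by an explicit point count on a modular curve. Suppose $\overline{\rho}_{E,\ell}$ is reducible. By Lemma~\ref{lem:tors} and the paragraph following it, there is an elliptic curve $E_1/K$ with a subgroup of $E_1(K)$ isomorphic to $\Z/2\Z\times\Z/2\ell\Z$, and hence, by the isogeny argument given in that paragraph, an elliptic curve $E_2/K$ possessing a $K$-rational cyclic subgroup of order $4\ell$.

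Here $K=\Q(\zeta+\zeta^{-1})$ has degree $(p-1)/2=3$. A group of the form $\Z/2\Z\times\Z/2\ell\Z$ is not cyclic, so by the classification of torsion subgroups of elliptic curves over cubic number fields (\citet{DKSS}, building on \citet{Parent1,Parent2} and others) one must have $E_1(K)_{\mathrm{tors}}\cong\Z/2\Z\times\Z/2m\Z$ with $1\le m\le 7$. Then $\ell\mid m$, and since $\ell$ is prime with $\ell\ge 5$ and $\ell\ne p=7$, the only surviving possibility is $\ell=5$.

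It remains to rule out $\ell=5$. In that case $E_2/K$ carries a $K$-rational cyclic $20$-isogeny, i.e.\ gives a non-cuspidal point of $X_0(20)(K)$, where $X_0(20)$ is the elliptic curve \texttt{20a1} of~\eqref{eqn:x020}. As no elliptic curve over $\Q$ admits a cyclic $20$-isogeny, all six $\Q$-rational points of \texttt{20a1} are cusps; so it suffices to show there is no rank or torsion growth for \texttt{20a1} over $K$. I would settle this with a \texttt{Magma} computation of the Mordell--Weil group of \texttt{20a1} over $K=\Q(\zeta_7+\zeta_7^{-1})$, which should return rank $0$ and torsion $\Z/6\Z$; then $X_0(20)(K)=X_0(20)(\Q)$ consists only of cusps, contradicting the point produced by $E_2$, and completing the proof.

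The main obstacle is precisely this last computation: one must be sure the descent for \texttt{20a1} over the cubic field $K$ terminates and confirms that neither the rank nor the torsion grows, since a new rational point would signal a genuine cyclic $20$-isogeny over $K$ and the argument would collapse. A secondary point is that we are leaning on the full, and rather deep, classification of cubic torsion; one could instead bound $\ell$ directly via the effective irreducibility results of \citet{FSirred}, sharpened using the semistability of $E$ and the fact that $K$ has narrow class number $1$ (as in Lemma~\ref{lem:tors}), but that route leaves more primes to clean up by hand and seems less economical.
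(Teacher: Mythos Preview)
Your argument is correct in outline but rests on a misattribution. The result you invoke---that over a cubic field the non-cyclic torsion groups are exactly $\Z/2\Z\times\Z/2m\Z$ with $1\le m\le 7$---is not in \citet{DKSS} (which concerns prime torsion over fields of degree $\ge 5$); it is the full cubic torsion classification of Derickx, Etropolski, van Hoeij, Morrow and Zureick-Brown, which in fact postdates the paper. Granting that input, your reduction to the single case $\ell=5$ is valid, and your treatment of that case via $X_0(20)(K)$ matches the paper's.

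The paper proceeds differently: it uses only Parent's bound $\ell\le 13$ on \emph{prime} torsion over cubic fields, leaving $\ell\in\{5,11,13\}$ to be handled individually. For $\ell=5$ it computes $X_0(20)(K)$ as you do. For $\ell=11$ it computes $X_0(11)(K)$, finds it equals $X_0(11)(\Q)$, and observes that the three non-cuspidal $\Q$-points all have integral $j$-invariant---incompatible with the multiplicative reduction of $E$ at the primes above $2$. For $\ell=13$ it invokes the criterion of \citet[Theorem~1]{BN} (with $X=X_1(26)$, using its gonality and the rank-$0$ Jacobian over $\Q$) to rule out $\Z/26\Z$-torsion over $K$. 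Your route is shorter because it front-loads a deeper classification theorem; the paper's route uses the weaker input available when it was written, at the cost of two extra modular-curve computations.
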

\begin{proof}
In this case $K$ is a cubic field.
By the 
classification of cyclic $\ell$-torsion on elliptic
curves over cubic fields \citep{Parent1,Parent2},
we know $\ell \le 13$. Since $\ell \ne p$,
we need only deal with the case $\ell=5$, $11$, $13$.
To eliminate $\ell=5$ and $\ell=11$ we computed the $K$-points 
on the
modular curves $X_0(20)$
and $X_0(11)$. 
These both have rank $0$ and their $K$-points
are in fact defined over $\Q$. 
The $\Q$-points of $X_0(20)$ are cuspidal thus $\ell \ne 5$.
The three non-cuspidal $\Q$-points on $X_0(11)$
all have integral $j$-invariants. As our curve $E$ 
has multiplicative reduction at $2\OO_K$, 
it follows that $\ell \ne 11$.

We suppose $\ell=13$. We now apply 
\citet[Theorem 1]{BN}. That theorem gives a useful and practical
criterion for ruling out the existence of 
torsion subgroups $\Z/m\Z \times \Z/n\Z$ on elliptic curves
over a given number field $K$ (the remarks at the end of 
Section 2 of \citep{BN} are useful when applying that theorem).
The theorem involves making certain choices and we indicate
them briefly; in the notation of the theorem, we take $A=\Z/26\Z$,
$L=\Q$, $m=1$, $n=26$, $X=X^\prime=X_1(26)$, $p=\fp_0=7$.
To apply the theorem we need the fact that the gonality
of $X_1(26)$ is $6$ \citep*{DH}, and that its Jacobian
has Mordell--Weil rank $0$ over $\Q$ \citep[page 11]{BN}.
We merely check that conditions (i)--(vi) of \cite[Theorem 1]{BN}
are satisfied, and conclude that there are no elliptic 
curves over $K$ with a subgroup isomorphic to $\Z/26\Z$.
This completes the proof.
\end{proof}

\begin{lem}
Let $p=11$. Then $\overline{\rho}_{E,\ell}$ is irreducible.
\end{lem}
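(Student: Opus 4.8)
The plan is to run the same argument as in the cases $p=5$ and $p=7$. Now $K=\Q(\zeta_{11}+\zeta_{11}^{-1})$ is a totally real quintic field of narrow class number $1$, so Lemma~\ref{lem:tors} applies. Suppose $\overline{\rho}_{E,\ell}$ is reducible. As in the discussion following Lemma~\ref{lem:tors}, this produces an elliptic curve $E_1/K$ — either $E$ itself or an $\ell$-isogenous curve — with $E_1(K)$ containing a subgroup isomorphic to $\Z/2\Z\times\Z/2\ell\Z$ (the full $2$-torsion is inherited because $\ell$ is odd, and $E_1$ still has multiplicative reduction at the primes above $2$, the reduction type being isogeny-invariant), hence an elliptic curve $E_2/K$ with a $K$-rational cyclic subgroup of order $4\ell$, and in particular non-cuspidal $K$-points on each of $X_0(\ell)$, $X_1(\ell)$, $X_0(2\ell)$, $X_1(2\ell)$, $X_0(4\ell)$ and $X_1(2,2\ell)$. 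I would first bound $\ell$: since $E_1/K$ has a $K$-rational point of order $\ell$ and $[K:\Q]=5$, the classification of torsion of elliptic curves over number fields of small degree \citep{DKSS} forces $\ell\le 11$; together with $\ell\ge 5$ and $\ell\ne p=11$ this leaves only $\ell=5$ and $\ell=7$.

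For $\ell=5$ I would use the elliptic curve $X_0(20)$ of \eqref{eqn:x020} (Cremona label \texttt{20a1}), into which the cyclic $20$-isogeny on $E_2$ gives a non-cuspidal $K$-point. One computes the Mordell--Weil group of \texttt{20a1} over $K$ with \texttt{Magma}; I expect it to have rank $0$ with $X_0(20)(K)=X_0(20)(\Q)$, which consists only of cusps (as already observed for the cubic field in the $p=7$ case), a contradiction. For $\ell=7$ I would likewise use $X_0(14)$ (Cremona label \texttt{14a1}), exploiting the cyclic $14$-isogeny on $E_1$: compute the Mordell--Weil group over $K$, and — provided the rank is $0$ — note that the only non-cuspidal $K$-points have integral $j$-invariant, which contradicts the multiplicative reduction of $E_1$ at the primes above $2$, exactly as in the $(p,\ell)=(7,11)$ argument. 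As a variant one could instead work with the genus-one curve $X_1(14)$, whose non-cuspidal points parametrise points of order $14$, so that a non-cuspidal $K$-point there would suffice without any appeal to the reduction type.

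The one genuinely uncertain — and purely computational — step is verifying that \texttt{20a1}, \texttt{14a1} (and any fallback curves such as $X_1(14)$ or $X_0(28)$) retain Mordell--Weil rank $0$ over the quintic field $K=\Q(\zeta_{11}+\zeta_{11}^{-1})$, so that their $K$-rational points can be enumerated; this is the point at which the argument could conceivably fail. If it did, one would fall back on a formal-immersion and gonality argument in the style of \citet[Theorem 1]{BN}, as in the $(p,\ell)=(7,13)$ case. Since here no prime $\ell\ge 13$ has to be treated, I expect this heavier machinery not to be needed, and the proof to reduce to the two routine point counts above.
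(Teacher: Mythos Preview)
Your overall strategy matches the paper's, but there is a genuine gap in the bound you extract from \citet{DKSS}. Over a quintic field the classification of prime-order torsion only gives $\ell \le 19$, not $\ell \le 11$ (the paper quotes exactly this). Hence, after excluding $\ell=p=11$, you must still treat $\ell=5,7,13,17,19$, not just $\ell=5,7$. Your handling of $\ell=5$ and $\ell=7$ via the rank-$0$ elliptic curves $X_0(20)$ and $X_0(14)$ over $K$ is precisely what the paper does (the computations do go through), and $\ell=19$ is dispatched the same way using $X_0(19)$, which also has rank $0$ over $K$.

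The cases $\ell=13$ and $\ell=17$ are the ones you have missed, and here the relevant modular curves $X_0(52)$, $X_0(68)$ are of genus $>1$, so the direct Mordell--Weil approach is unavailable. The paper falls back exactly on the formal-immersion/gonality machinery of \citet[Theorem 1]{BN} that you mention only as a contingency: it applies that theorem with $X_1(26)$ (gonality $6$, $J_1(26)(\Q)$ of rank $0$) to exclude $\ell=13$, and with $X_1(34)$ (gonality $10$, $J_1(34)(\Q)$ of rank $0$) to exclude $\ell=17$, in each case taking $\fp_0=11$. So your expectation that ``no prime $\ell\ge 13$ has to be treated'' and that ``this heavier machinery [is] not needed'' is precisely where the argument breaks down.
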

\begin{proof}
Now $K$ has degree $5$. 
By the 
classification of cyclic $\ell$-torsion on elliptic
curves over quintic fields \citep{DKSS} we know
that $\ell \le 19$. As $\ell \ne p$ we need to
deal with $\ell=5$, $7$, $13$, $17$, $19$.

The elliptic
curves $X_0(20)$, $X_0(14)$  and
$X_0(19)$ have rank $0$ over $K$ and this allowed us to
quickly eliminate $\ell=5$, $7$, $19$. 

Suppose $\ell=13$. 
We again apply \citet[Theorem 1]{BN},
with choices $A=\Z/26\Z$,
$L=\Q$, $m=1$, $n=26$, $X=X^\prime=X_1(26)$, $p=\fp_0=11$
(with Mordell--Weil and gonality information as in
the proof of Lemma~\ref{lem:p7}).
This shows that there are no elliptic curves over $K$
with a subgroup isomorphic to $\Z/26\Z$,
allowing us to eliminate $\ell=13$.

Suppose $\ell=17$.
We apply the same theorem with choices $A=\Z/34\Z$,
$L=\Q$, $m=1$, $n=34$, $X=X^\prime=X_1(34)$, $p=\fp_0=11$.
For this we need the fact that $X$ has gonality $10$
\citep{DH} and that the rank of $J_1(34)$ over $\Q$
is $0$ \citep[page 11]{BN}. Applying the theorem shows
that there are no elliptic curves over $K$ with
a subgroup isomorphic to $\Z/34\Z$.
This completes the proof.
\end{proof}

It remains to deal with $p=13$. Unfortunately the field $K$
in this case is sextic, and the known bound \citep{DKSS}
for cyclic $\ell$-torsion over sextic fields is $\ell \le 37$,
and we have been unable to deal with the cases $\ell=37$ directly
over the sextic field.
We therefore proceed a little differently. We are in fact most
interested in showing the irreducibility of $\overline{\rho}_{E^\prime,\ell}$
where $E^\prime$ is the Frey curve 
defined over the degree $3$ subfield $K^\prime$.

\begin{lem}\label{lem:nmidx}
Let $p=13$.
Then $\overline{\rho}_{E^\prime,\ell}$ is irreducible.
\end{lem}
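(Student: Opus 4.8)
The plan is to exploit the $K^\prime$-rational $2$-torsion of $E^\prime$ together with the strong bounds on torsion over the cubic field $K^\prime$, descending to the sextic field $K=\Q(\zeta_{13}+\zeta_{13}^{-1})$ only for the one awkward prime $\ell=5$. Suppose, for contradiction, that $\overline{\rho}_{E^\prime,\ell}$ is reducible; recall the standing hypotheses $\ell\ge 5$ and $\ell\ne p=13$. Reducibility of $\overline{\rho}_{E^\prime,\ell}$ amounts to the existence of a $G_{K^\prime}$-stable line in $E^\prime[\ell]$, i.e.\ a $K^\prime$-rational cyclic subgroup $C$ of order $\ell$. By the Remark at the end of Section~\ref{sec:FreyCurve2} the curve $E^\prime$ also has a $K^\prime$-rational point $P$ of order $2$, so $\langle P\rangle+C$ is a $K^\prime$-rational cyclic subgroup of order $2\ell$; in particular $E^\prime$ gives a non-cuspidal $K^\prime$-point on $X_0(\ell)$. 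The classification of cyclic $\ell$-torsion over cubic fields of \citet{Parent1,Parent2}, used exactly as in the proof of Lemma~\ref{lem:p7}, then forces $\ell\le 13$, leaving only $\ell=5,7,11$. Note that the additive reduction of $E^\prime$ at the prime $\sB$ above $13$ plays no part: we use reducibility only to manufacture a rational cyclic subgroup, together with the non-integrality of $j(E^\prime)$ below.

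For $\ell=7$ and $\ell=11$ I would finish over $K^\prime$ itself. When $\ell=7$ the cyclic subgroup of order $2\ell=14$ produces a non-cuspidal $K^\prime$-point on $X_0(14)$, the elliptic curve \texttt{14a1}; when $\ell=11$ the cyclic subgroup of order $\ell$ produces a non-cuspidal $K^\prime$-point on $X_0(11)$, the curve \texttt{11a1} of \eqref{eqn:x011}. In both cases one verifies with \texttt{Magma} that the modular curve has Mordell--Weil rank $0$ over $K^\prime$ and that all of its $K^\prime$-points are in fact defined over $\Q$; since every non-cuspidal $\Q$-point of $X_0(14)$ and of $X_0(11)$ has integral $j$-invariant, while our Frey curve $E^\prime$ has multiplicative reduction at every prime of $K^\prime$ above $2$ (Lemma~\ref{lem:Frey3}) and hence $j(E^\prime)$ is non-integral there, this gives the required contradiction.

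The prime $\ell=5$ is genuinely harder: $X_0(10)$ has genus $0$, and $E^\prime$ need not have enough $K^\prime$-rational $2$-torsion to land on the genus-one curve $X_0(20)$. Here I would pass to $K$: by Lemma~\ref{lem:compare}, reducibility of $\overline{\rho}_{E^\prime,5}$ over $K^\prime$ forces reducibility of $\overline{\rho}_{E,5}$ over $K$. As $E/K$ has full $2$-torsion and a $K$-rational cyclic $5$-subgroup, the isogeny argument following Lemma~\ref{lem:tors} (see the footnote there) yields an elliptic curve $E_2/K$ with a $K$-rational cyclic subgroup of order $20$, hence a non-cuspidal $K$-point on $X_0(20)$, the curve \texttt{20a1} of \eqref{eqn:x020}. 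One then checks with \texttt{Magma} that $X_0(20)$ has Mordell--Weil rank $0$ over $K$ and that $X_0(20)(K)=X_0(20)(\Q)$, all six points of which are cusps; this contradiction finishes the proof.

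The main obstacle is the block of Mordell--Weil computations behind the last two paragraphs. For $\ell=7,11$ one needs only that \texttt{14a1} and \texttt{11a1} have rank $0$ over the cubic field $K^\prime$, which is light, but $\ell=5$ requires the rank of \texttt{20a1} over the sextic field $K$, together with the absence of any unexpected $K$- or $K^\prime$-rational torsion on all three curves. There is no genus $\ge 2$ modular curve over $K^\prime$ that $E^\prime$ is forced to hit when $\ell=5$, so the descent to a degree-$6$ field cannot be avoided; the feasibility of the whole argument rests on the very small conductors of \texttt{11a1}, \texttt{14a1} and \texttt{20a1}.
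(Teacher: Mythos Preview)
Your argument has two genuine gaps.

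First, you invoke Parent's classification to bound $\ell\le 13$, but Parent's results concern rational $\ell$-torsion \emph{points} (non-cuspidal points on $X_1(\ell)$), not $K^\prime$-rational cyclic $\ell$-subgroups (points on $X_0(\ell)$). Reducibility of $\overline{\rho}_{E^\prime,\ell}$ gives only the latter, and there is no analogue of Parent's bound for isogenies over cubic fields. In the paper the passage from ``cyclic subgroup'' to ``torsion point'' is supplied by Lemma~\ref{lem:tors}, whose proof uses semistability together with narrow class number $1$. When $13\mid x$ the curve $E^\prime/K^\prime$ is semistable and the paper does adapt Lemma~\ref{lem:tors} over $K^\prime$; but when $13\nmid x$ the curve $E^\prime$ has additive reduction at $\sB$, the characters $\psi_i$ may ramify there, and your remark that ``the additive reduction plays no part'' is precisely where the argument breaks. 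In that case the paper instead passes to $K$, applies Lemma~\ref{lem:tors} to the semistable $E/K$ to obtain genuine $\ell$-torsion, and then uses the good reduction of $E$ at $\fp$ together with the Hasse--Weil bound over $\F_\fp=\F_{13}$ to force $\ell=5$ directly, bypassing Parent altogether in this sub-case.

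Second, your treatment of $\ell=5$ is factually wrong: the elliptic curve $X_0(20)$ has Mordell--Weil rank $1$ over the sextic field $K$, not $0$. The paper reports that the direct computation over $K$ was beyond \texttt{Magma}, and instead decomposes via $K=K^\prime(\sqrt{13})$: the rank over $K^\prime$ is $0$ but the quadratic twist by $13$ contributes rank $1$, giving $X_0(20)(K)=X_0(20)(\Q(\sqrt{13}))$. From this the paper deduces that $j(E^\prime)\in\Q(\sqrt{13})\cap K^\prime=\Q$, and then finishes with an explicit computation showing that $2^{20n-4}j(E)\bmod 2$ does not lie in $\F_2$. So the case $\ell=5$ requires substantial further argument beyond a rank-zero check.
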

\begin{proof}
Suppose $\overline{\rho}_{E^\prime,\ell}$ is reducible.
We shall treat the case $13 \mid x$ and $13 \nmid x$ separately.
Suppose first that $13 \mid x$. Then the curve $E^\prime$ over
the field $K^\prime$ is semistable (Lemma~\ref{lem:Frey4}).
It is now straightforward to adapt the proof of Lemma~\ref{lem:tors}
to show that $E^\prime$ has non-trivial $\ell$-torsion,
or is $\ell$ isogenous to an elliptic curve with non-trivial
$\ell$-torsion. Thus there is an elliptic curve over $K^\prime$
with a point of exact order $2\ell$. Now $K^\prime$
is cubic, so by \citep{Parent1,Parent2} we have $\ell \le 13$.
As $\ell \ne p$, it remains to deal with the cases $\ell=5$, $7$, $11$.
The elliptic curves $X_0(14)$ and $X_0(11)$ have rank zero over $K^\prime$,
and in fact their $K^\prime$-points are the same as their $\Q$-points.
This easily allows us to eliminate $\ell=7$ and $\ell=11$ as before.
The curve $X_0(10)$ has genus $0$ so we need a different approach,
and we will leave this case to the end of the proof (recall that
$E^\prime$ does not necessarily have full $2$-torsion over $K^\prime$).

\medskip

Now suppose that $13 \nmid x$. Here $E^\prime/K^\prime$ is not
semistable. As we have assumed that $\overline{\rho}_{E^\prime,\ell}$ is 
reducible we have that $\overline{\rho}_{E,\ell}$ is reducible 
(Lemma~\ref{lem:compare}). Now we may apply Lemma~\ref{lem:tors}
to deduce the existence of $E_1/K$ (which is $E$ or $\ell$
isogenous to it) that has a subgroup isomorphic
to $\Z/2\Z \times \Z/2\ell\Z$. 
As before, let $\fp$ be the unique prime of $K$ above $13$.
By Lemma~\ref{lem:Frey1} the Frey curve $E$ has good reduction at $\fp$.
As $\fp \nmid 2 \ell$,
we know from injectivity of torsion that $4 \ell \mid \# E(\F_\fp)$.
But $\F_\fp=\F_{13}$.
By the Hasse--Weil bounds, $\ell \le (\sqrt{13}+1)^2/4\approx 5.3$. 
Thus $\ell=5$. 

It remains to deal with the case $\ell=5$ for both $13 \mid x$
and $13 \nmid x$. In both case we obtain a $K$-point on
$X=X_0(20)$ whose image in $X_0(10)$ is a $K^\prime$-point.
Would like to compute
$X(K)$. This computation proved beyond \texttt{Magma}'s
capability.
However, $K=K^\prime(\sqrt{13})$.
Thus the rank of $X(K)$ is the sum of the ranks of $X(K^\prime)$ and of 
$X^\prime(K^\prime)$ where $X^\prime$ is the quadratic twist
of $X$ by $13$. Computing the ranks of $X(K^\prime)$ and $X^\prime(K^\prime)$
turns out to be a task within the capabilities of \texttt{Magma},
and we find that they are respectively $0$ and $1$. Thus
$X(K)$ has rank $1$. With a little more work we find that
\[
X(K) = \frac{\Z}{6\Z} \cdot (4,10)  + \Z \cdot (3, 2 \sqrt{13}) \, .
\]
Thus $X(K)=X(\Q(\sqrt{13}))$. It follows that the  
$j$-invariant of $E^\prime$ must belong to $\Q(\sqrt{13})$.
But the $j$-invariant belongs to $K^\prime$ too, and
so belongs to $\Q(\sqrt{13}) \cap K^\prime=\Q$.

Let the rational integers 
$a$, $b$ be as in Sections~\ref{sec:Descent},~\ref{sec:FreyCurve}. Recall that
$b$ is odd, and that $\ord_2(a)=5n$ where $n>0$.
Write $a=2^{5n} a^\prime$ where $a^\prime$ is odd. 
We know that $\ord_2(j(E))=-(20n-4)$. The prime $2$
is inert in $K^\prime$. An explicit calculation,
making use of the fact that $a^\prime \equiv b \equiv 1 \pmod{2}$,
shows
that
\[
2^{20n-4} j(E) 
\equiv
\frac{\theta_j^2 \theta_k^2}{(\theta_j-\theta_k)^2} \pmod{2}. 
\]
Computing this residue for the possible values of $j$ and $k$,
we checked that it does not belong to $\F_2$, giving us a contradiction.
\end{proof}

\section{Proof of Theorem~\ref{thm:1}}\label{sec:final}

In Section~\ref{sec:peq3} we proved Theorem~\ref{thm:1}
for $p=3$. In this section we deal with the values
$p=5$, $7$, $11$, $13$. Let $\ell$, $m \ge 5$ be primes.
Suppose $(x,y,z)$ is a primitive non-trivial solution 
to \eqref{eqn:main}. Without loss of generality, $2 \mid x$.
We let $K=\Q(\zeta+\zeta^{-1})$ where $\zeta=\exp(2 \pi i/p)$.
For $p=13$ we also let $K^\prime$ be the unique subfield
of $K$ of degree $3$. Let $E$ be the Frey curve attached
to this solution $(x,y,z)$ defined
in Section~\ref{sec:FreyCurve} where we take $j=1$ and $k=2$.
For $p=13$ we also work with the Frey curve $E^\prime$
defined in Section~\ref{sec:FreyCurve2}
where we take $j=1$ and $k=5$ (these choices
satisfy the condition $\tau(\theta_j)=\theta_k$
where $\tau$ is unique involution on $K$).
By Lemma~\ref{lem:modularitysmallp} these
elliptic curves are modular. Moreover by the results
of Section~\ref{sec:Irred} the representation
$\overline{\rho}_{E,\ell}$ is irreducible for $p=5$, $7$, $11$, $13$,
and the representation $\overline{\rho}_{E^\prime,\ell}$ is irreducible
for $p=13$. 
Let $\cK$ be the number field $K$ unless $p=13$ and $13 \mid x$
in which case we take $\cK=K^\prime$. Also let $\cE$
be the Frey curve $E$ unless $p=13$ and $13 \mid x$
in which we take $\cE$ to be $E^\prime$. 
By Lemma~\ref{lem:ll} there is a Hilbert cuspidal
eigenform $\ff$ over $\cK$ of parallel weight $2$
and level $\cN$ as given in Table~\ref{table1}, such that
$\overline{\rho}_{\cE,\ell} \sim \overline{\rho}_{\ff,\lambda}$
where $\lambda \mid \ell$ is a prime of $\Q_\ff$, the field
generated by the Hecke eigenvalues of $\ff$.

Using the \texttt{Magma} \lq Hilbert modular
forms\rq\ package we computed the possible Hilbert newforms
at these levels. The information is summarized in Table~\ref{table1}.

\begin{table}[h]
\begin{center}
\begin{tabular}{|c|c|c|c|c|c|c|}
\hline
$p$ & Case &  Field $\cK$ &  Frey curve $\cE$ & Level $\cN$ & Eigenforms $\ff$ & $[\Q_\ff:\Q]$\\
\hline\hline
\multirow{2}{*}{$5$} & $5 \nmid x$ & $K$ & $E$ & $2\OO_K$ & -- & -- \\
\cline{2-7}
	& $ 5 \mid x$ & $K$ & $E$ & $2 \fp$ & -- & --\\
\hline\hline
\multirow{2}{*}{$7$} & $7 \nmid x$ & $K$ & $E$ & $2\OO_K$ & -- & -- \\
\cline{2-7}
	& $ 7 \mid x$ & $K$ & $E$ & $2 \fp$ & $\ff_1$ & $1$ \\
\hline\hline
\multirow{2}{*}{$11$} & $11 \nmid x$ & $K$ & $E$ & $2\OO_K$ & $\ff_2$ & $2$ \\
\cline{2-7}
	& $ 11 \mid x$ & $K$ & $E$ & $2 \fp$ & $\ff_3$, $\ff_4$ & $5$\\
\hline\hline
\multirow{5}{*}{$13$} & \multirow{3}{*}{$13 \nmid x$} 
& \multirow{3}{*}{$K$} & \multirow{3}{*}{$E$} & \multirow{3}{*}{$2\OO_K$} & $\ff_5$, $\ff_6$ & $1$ \\
& & &  & & $\ff_{7}$ & $2$ \\
& & &  & & $\ff_{8}$ & $3$ \\
\cline{2-7}
	&  \multirow{2}{*}{$13 \mid x$} & 
	\multirow{2}{*}{$K^\prime$} & \multirow{2}{*}{$E^\prime$} & \multirow{2}{*}{$2 \sB$} & $\ff_{9}$, $\ff_{10}$ & $1$\\
& & 	&  &  & $\ff_{11}$, $\ff_{12}$ & $3$\\
\hline
\end{tabular}
\caption{Frey curve and Hilbert eigenform information.
Here $\fp$ is the unique prime of $K$ above $p$,
and $\sB$ is the unique prime of $K^\prime$ above $p$.}
\label{table1}
\end{center}
\end{table}

As shown in the table, there are no newforms at the relevant
levels for $p=5$, completing the contradiction for this case.
\footnote{We point out in passing that for $p=5$ we could have also worked
with the Frey curve $E^\prime/\Q$.
In that case the Hilbert newforms $\ff$ are actually classical
newforms of weight $2$ and levels $2$ and $50$. There are
no such newforms of level $2$, but there are two newforms of level $50$
corresponding to the elliptic curve isogeny classes
\texttt{50a} and \texttt{50b}. These would require further work
to eliminate.}

We now explain how we complete the contradiction for the remaining
cases. 
Suppose 
$\fq$ a prime of $\cK$ such that $\fq \nmid 2 p \ell$. In particular, $\fq$ does not 
divide the level of $\ff$, and $\cE$ has good or multiplicative reduction at $\fq$.
Write $\sigma_\fq$ for a Frobenius element of $G_{\cK}$
at $\fq$. Comparing the traces of
$\overline{\rho}_{\cE,\ell}(\sigma_\fq)$ and $\overline{\rho}_{\ff,\lambda}(\sigma_\fq)$
we obtain:
\begin{enumerate}
\item[(i)] if $\cE$ has good reduction at $\fq$ then 
$a_\fq(\cE) \equiv a_\fq(\ff) \pmod{\lambda}$;
\item[(ii)] if $\cE$ has split multiplicative reduction at $\fq$ then
$\norm(\fq)+1 \equiv a_\fq(\ff) \pmod{\lambda}$;
\item[(iii)] if $\cE$ has non-split multiplicative reduction at $\fq$ then
$-(\norm(\fq)+1) \equiv a_\fq(\ff) \pmod{\lambda}$.
\end{enumerate}

\medskip

Let $q \nmid 2p \ell$ be a rational prime 
and let
\[
\cA_q=\{ (\eta,\mu   ) \; : \quad 0 \le \eta,~\mu \le q-1, \quad (\eta, \mu) \ne (0,0) \}.
\]
For $(\eta,\mu) \in \cA_q$ 
let
\begin{gather*}
u(\eta,\mu)=
\begin{cases}
(\theta_j+2)\eta^2+(\theta_j-2) \mu^2 & \text{if $p \nmid x$},\\
\frac{1}{(\theta_j-2)}\left((\theta_j+2)\eta^2+(\theta_j-2) \mu^2\right) & \text{if $p \mid x$},\\
\end{cases}\\
v(\eta,\mu)=
\begin{cases}
-\frac{(\theta_j-2)}{(\theta_k-2)}\left((\theta_k+2)\eta^2+(\theta_k-2) \mu^2\right) & \text{if $p \nmid x$},\\
-\frac{1}{(\theta_k-2)}\left((\theta_k+2)\eta^2+(\theta_k-2) \mu^2\right) & \text{if $p \mid x$}.\\
\end{cases}
\end{gather*}
Write 
\[
E_{(\eta,\mu)} \; : \; Y^2=X(X-u(\eta,\mu))(X+v(\eta,\mu)) \, .
\]
Let $\Delta(\eta,\mu)$, $c_4(\eta,\mu)$
and $c_6(\eta,\mu)$ be the usual invariants of this model. Let $\gamma(\eta,\mu)=-c_4(\eta,\mu)/c_6(\eta,\mu)$.
Let $(a,b)$ be as in Section~\ref{sec:Descent}. 
As $\gcd(a,b)=1$,
we have $(a,b) \equiv (\eta,\mu) \pmod{q}$ for some $(\eta,\mu)\in \cA_q$.
In particular $(a,b) \equiv (\eta,\mu) \pmod{\fq}$ for all primes $\fq \mid q$ of $\cK$.
From the definitions of the Frey curves $E$ and $E^\prime$ in Sections~\ref{sec:FreyCurve} and~\ref{sec:FreyCurve2}
we see that $\cE$ has good reduction at $\fq$ if and only if $\fq \nmid \Delta(\eta,\mu)$,
and in this case  $a_\fq(\cE)=a_\fq(E_{(\eta,\mu)})$.
Let
\[
B_\fq(\ff,\eta,\mu)=\begin{cases}
a_\fq(E_{(\eta,\mu)})-a_\fq(\ff) & \text{if $\fq \nmid \Delta(\eta,\mu)$},\\
\norm(\fq)+1-a_\fq(\ff) & \text{if $\fq \mid \Delta(\eta,\mu)$ and 
$\overline{\gamma(\eta,\mu)}
\in (\F_\fq^*)^2$},\\
\norm(\fq)+1+a_\fq(\ff) & \text{if $\fq \mid \Delta(\eta,\mu)$ and 
$\overline{\gamma(\eta,\mu)}
\notin (\F_\fq^*)^2$}.\\
\end{cases}
\] 
From (i)--(iii) above we see that
$\lambda \mid B_\fq(\ff,\eta,\mu)$. Now let
\[
B_q(\ff,\eta,\mu)=\sum_{\fq \mid q} B_\fq(\ff,\eta,\mu) \cdot \OO_{\ff},
\]
where $\OO_\ff$ is the ring of integers of $\Q_\ff$. 
Since $(a,b) \equiv (\eta,\mu) \pmod{\fq}$ for all $\fq \mid q$, we have that $\lambda \mid B_q(\ff,\eta,\mu)$.
Now $(\eta,\mu)$ is some unknown element of $\cA_q$. Let
\[
B_q^\prime(\ff)= \prod_{(\eta,\mu) \in \cA_q} B_q(\ff,\eta,\mu) \, .
\]
Then $\lambda \mid B_q^\prime(\ff)$. Previously, we have supposed that $q \nmid 2p \ell$.
This is inconvenient as $\ell$ is unknown. Now we simply suppose $q \nmid 2p$, and let $B_q(\ff)=q B_q^\prime(\ff)$.
Then, since $\lambda \mid \ell$, we certainly have  that $\lambda \mid B_q(\ff)$ regardless of whether $q=\ell$ or not.

\medskip

Finally, if $S=\{q_1,q_2,\dotsc,q_r\}$ is a set of rational primes, $q_i \nmid 2p$ then $\lambda$ divides
$\OO_\ff$-ideal $\sum_{i=1}^r B_{q_i}(\ff)$ and thus $\ell$ divides $B_S(\ff)=\norm(\sum_{i=1}^r B_{q_i}(\ff))$.
Table~\ref{table2} gives our choices for the set $S$ and the corresponding value of $B_S(\ff)$
for each of the eigenforms $\ff_1,\dotsc,\ff_{12}$ appearing in Table~\ref{table1}. Recalling
that $\ell \ge 5$ and $\ell \ne p$ gives a contradiction unless $p=13$ and $\ell=7$.
This completes the proof of Theorem~\ref{thm:1}.

The reader is wondering whether we can eliminate the case $p=13$ and $\ell=7$ by enlarging
our set $S$; here we need only concern ourselves with forms $\ff_9$ and $\ff_{11}$.
Consider $(\eta,\mu)=(0,1)$ which belongs to  $\cA_q$ for any $q$. The corresponding
Weierstrass model $E_{(0,1)}$ is singular with a split note. It follows that 
$B_{\fq}(\ff,0,1)=\norm(\fq)+1-a_\fq(\ff)$. Note that if $\lambda$ is a prime of $\Q_\ff$
that divides $\norm(\fq)+1-a_\fq(\ff)$ for all $\fq \nmid 26$, then $\ell$ will
divide $B_S(\ff)$ for any set $S$ where $\lambda \mid \ell$. This appears to be the 
case  with $\ell=7$ for $\ff_{11}$, and we now show that it is indeed  the case for $\ff_9$.
Let $F$ be the elliptic curve with Cremona label \texttt{26b1}:
\[
F \; : \; y^2 + xy + y = x^3 - x^2 - 3x + 3,
\]
which has conductor $2 \sB$ as an elliptic curve over $\cK$.
As $\cK/\Q$ is cyclic, we know that $F$ is modular over $\cK$ and hence corresponds
to a Hilbert modular form of parallel weight $2$ and level $2 \sB$, and by comparing
eigenvalues we can show that it in fact corresponds to $\ff_9$. Now the point $(1,0)$
on $F$ has order $7$. It follows that $7 \mid \# E(\F_\fq)=\norm(\fq)+1-a_\fq(\ff)$
for all $\fq \nmid 26$ showing that for $\ff_9$ we can never eliminate $\ell=7$
by enlarging the set $S$. We can still complete the contradiction in this
case as follows. Note that $\overline{\rho}_{\ff_9,7}\sim \overline{\rho}_{F,7}$ which
is reducible. As $\overline{\rho}_{\cE,7}$ is irreducible we have $\overline{\rho}_{\cE,7} \not \sim
\overline{\rho}_{\ff_9,7}$, completing the contradiction for $\ff=\ff_9$. 
We strongly suspect that reducibility of $\overline{\rho}_{\ff_{11},\lambda}$
(where $\lambda$ is the unique prime above $7$ of $\Q_{\ff_{11}}$)
but we are unable to prove it.

\begin{table}[h]
\begin{center}
{\tabulinesep=0.8mm
\begin{tabu}{|c|c|c|c|c|}
\hline
$p$ & Case &  $S$ & Eigenform $\ff$ & $B_S(\ff)$\\
\hline\hline
$7$ & $ 7 \mid x$ & $\{3\}$ & $\ff_1$ & $2^8\times 3^5 \times 7^6$ \\
\hline\hline
\multirow{3}{*}{$11$} & $11 \nmid x$ & $\{23,43\}$ & $\ff_2$ & $1$ \\
\cline{2-5}
& \multirow{2}{*}{$11 \mid x$} & \multirow{2}{*}{$\{23,43\}$} & $\ff_3$ & $1$ \\
\cline{4-5}
&  & & $\ff_4$ & $1$ \\
\hline\hline
\multirow{8}{*}{$13$} & \multirow{4}{*}{$13 \nmid x$} & \multirow{4}{*}{$\{79, 103\}$} & $\ff_5$ & 
$2^{6240}\times 3^{312}$ \\
\cline{4-5}
& & & $\ff_6$ & $ 2^{12792}\times 3^{234}$\\
\cline{4-5}
& & & $\ff_7$ & $2^{10608}\times 3^{624}$\\
\cline{4-5}
& & & $\ff_8$ & $2^{18720}\times 3^{936}$\\
\cline{2-5}
& \multirow{4}{*}{$13 \mid x$} & \multirow{4}{*}{$\{3, 5, 31, 47\}$} & $\ff_9$ & $7^2$ \\
\cline{4-5}
& & & $\ff_{10}$ & $3^7$\\
\cline{4-5}
& & & $\ff_{11}$ & $7^6$\\
\cline{4-5}
& & & $\ff_{12}$ & $1$\\
\hline
\end{tabu}
}
\caption{Our choice of set of primes $S$ and the value of $B_S(\ff)$ for 
each of the eigenforms in Table~\ref{table1}.
}
\label{table2}
\end{center}
\end{table}

\medskip

\noindent \textbf{Remark.}
We now explain why we believe that the above strategy will succeed in proving
that that \eqref{eqn:main} has no non-trivial primitive solutions,
or at least in bounding the exponent $\ell$, for larger values of $p$
provided the eigenforms $\ff$ at the relevant levels can be computed. 
The usual obstruction, c.f.\ \citep[Section 9]{IHP}, to bounding the exponent comes from eigenforms $\ff$
that correspond
to elliptic curves with a torsion structure that matches the Frey curve $\cE$. Let $\ff$ be such an eigenform.
Let $q \nmid 2p$ be a rational prime and $\fq_1,\dotsc,\fq_r$ be the primes of $\cK$ above $q$. 
Note that $\norm(\fq_1)=\dotsc=\norm(\fq_r)=q^{d/r}$ where $d=[\cK: \Q]$. 
We would like to estimate the \lq probability\rq\ that $B_q(\ff)$ is non-zero.
Observe that if $B_q(\ff)$ is non-zero, then we obtain a bound for $\ell$. Examining the definitions
above, shows that the ideal $B_q(\ff)$ is $0$ if and only if there is some $(\eta,\mu) \in \cA_q$
such that $a_\fq(E_{(\eta,\mu)})=a_\fq(\ff)$ for $\fq=\fq_1,\fq_2,\dotsc,\fq_r$.
Treating $a_\fq(E_{(\eta,\mu)})$ as a random variable belonging to the Hasse interval $[-2 q^{d/2r}, 2 q^{d/2r}]$,
we see that the \lq probability\rq\ that $a_\fq(E_{(\eta,\mu)})=a_\fq(\ff)$ is 
roughly $c/q^{d/2r}$ with $c=1/4$. We can be a little more sophisticated and take
account of the fact that the torsion structures coincide, and that these impose congruence restrictions
on both traces. In that case we should take $c=1$ if $\cE$ has full $2$-torsion (i.e.\ $\cE$ is
the Frey curve $E$) and take $c=1/2$ if $\cE$ has
just one non-trivial point of order $2$ (i.e.\ $\cE=E^\prime$
and $p \equiv 1 \pmod{4}$). Thus the \lq probability\rq\ that $a_\fq(E_{(\eta,\mu)})=a_\fq(\ff)$ 
for all $\fq \mid q$ simultaneously is roughly $c^r/q^{d/2}$.
Since $B_q(\ff)=q \prod_{(\eta,\mu)\in \cA_q} B_q(\ff,\eta,\mu)$.
It follows that the \lq probability\rq\ $\mathbb{P}_q$ (say) that $B_q(\ff)$
is non-zero satisfies
\[
\mathbb{P}_q \sim \left(1 - \frac{c^r}{q^{d/2}}\right)^{q^2-1}
\]
For $q$ large, we have $(1-c^r/q^{d/2})^{q^{d/2}} \approx e^{-c^r}$. 
For $d\ge 5$, from the above estimates, we expect that
$\mathbb{P}_q \rightarrow 1$ as $q \rightarrow \infty$. Thus
we certainly expect our strategy to succeed in bounding the exponent $\ell$.

\bibliographystyle{plainnat}
\bibliography{genferm}

\begin{thebibliography}{45}
\providecommand{\natexlab}[1]{#1}
\providecommand{\url}[1]{\texttt{#1}}
\expandafter\ifx\csname urlstyle\endcsname\relax
  \providecommand{\doi}[1]{doi: #1}\else
  \providecommand{\doi}{doi: \begingroup \urlstyle{rm}\Url}\fi

\bibitem[Barnet-Lamb et~al.(2012)Barnet-Lamb, Gee, and Geraghty]{BGG1}
Thomas Barnet-Lamb, Toby Gee, and David Geraghty.
\newblock Congruences between {H}ilbert modular forms: constructing ordinary
  lifts.
\newblock \emph{Duke Math. J.}, 161\penalty0 (8):\penalty0 1521--1580, 2012.

\bibitem[Barnet-Lamb et~al.(2013)Barnet-Lamb, Gee, and Geraghty]{BGG2}
Thomas Barnet-Lamb, Toby Gee, and David Geraghty.
\newblock Congruences between {H}ilbert modular forms: constructing ordinary
  lifts, {II}.
\newblock \emph{Math. Res. Lett.}, 20\penalty0 (1):\penalty0 67--72, 2013.

\bibitem[Bennett(2006)]{Bennett}
Michael~A. Bennett.
\newblock The equation {$x^{2n}+y^{2n}=z^5$}.
\newblock \emph{J. Th\'eor. Nombres Bordeaux}, 18\penalty0 (2):\penalty0
  315--321, 2006.

\bibitem[Bennett and Chen(2012)]{BC}
Michael~A. Bennett and Imin Chen.
\newblock Multi-{F}rey {$\Bbb Q$}-curves and the {D}iophantine equation
  {$a^2+b^6=c^n$}.
\newblock \emph{Algebra Number Theory}, 6\penalty0 (4):\penalty0 707--730,
  2012.

\bibitem[Bennett and Skinner(2004)]{BS}
Michael~A. Bennett and Chris~M. Skinner.
\newblock Ternary {D}iophantine equations via {G}alois representations and
  modular forms.
\newblock \emph{Canad. J. Math.}, 56\penalty0 (1):\penalty0 23--54, 2004.

\bibitem[Bennett et~al.(2010)Bennett, Ellenberg, and Ng]{BEN}
Michael~A. Bennett, Jordan~S. Ellenberg, and Nathan~C. Ng.
\newblock The {D}iophantine equation {$A^4+2^\delta B^2=C^n$}.
\newblock \emph{Int. J. Number Theory}, 6\penalty0 (2):\penalty0 311--338,
  2010.

\bibitem[Bennett et~al.(2015{\natexlab{a}})Bennett, Chen, Dahmen, and
  Yazdani]{BennettSurvey}
Michael~A. Bennett, Imin Chen, Sander~R. Dahmen, and Soroosh Yazdani.
\newblock Generalized {F}ermat equations: a miscellany.
\newblock \emph{Int. J. Number Theory}, 11\penalty0 (1):\penalty0 1--28,
  2015{\natexlab{a}}.

\bibitem[Bennett et~al.(2015{\natexlab{b}})Bennett, Dahmen, Mignotte, and
  Siksek]{BDMS}
Michael~A. Bennett, Sander~R. Dahmen, Maurice Mignotte, and Samir Siksek.
\newblock Shifted powers in binary recurrence sequences.
\newblock \emph{Math. Proc. Cambridge Philos. Soc.}, 158\penalty0 (2):\penalty0
  305--329, 2015{\natexlab{b}}.

\bibitem[Beukers(2012)]{Beukers}
Frits Beukers.
\newblock The generalized {F}ermat equation.
\newblock In \emph{Explicit methods in number theory}, volume~36 of
  \emph{Panor. Synth\`eses}, pages 119--149. Soc. Math. France, Paris, 2012.

\bibitem[Blasius(2004)]{Blasius}
Don Blasius.
\newblock Elliptic curves, {H}ilbert modular forms, and the {H}odge conjecture.
\newblock In \emph{Contributions to automorphic forms, geometry, and number
  theory}, pages 83--103. Johns Hopkins Univ. Press, Baltimore, MD, 2004.

\bibitem[Bosma et~al.(1997)Bosma, Cannon, and Playoust]{Magma}
W.~Bosma, J.~Cannon, and C.~Playoust.
\newblock The {M}agma algebra system. {I}. {T}he user language.
\newblock \emph{J. Symbolic Comput.}, 24\penalty0 (3-4):\penalty0 235--265,
  1997.
\newblock Computational algebra and number theory (London, 1993).

\bibitem[Breuil and Diamond(2014)]{BD}
Christophe Breuil and Fred Diamond.
\newblock Formes modulaires de {H}ilbert modulo {$p$} et valeurs d'extensions
  entre caract\`eres galoisiens.
\newblock \emph{Ann. Sci. \'Ec. Norm. Sup\'er. (4)}, 47\penalty0 (5):\penalty0
  905--974, 2014.
\newblock ISSN 0012-9593.

\bibitem[Breuil et~al.(2001)Breuil, Conrad, Diamond, and Taylor]{BCDT}
Christophe Breuil, Brian Conrad, Fred Diamond, and Richard Taylor.
\newblock On the modularity of elliptic curves over {$\bold Q$}: wild 3-adic
  exercises.
\newblock \emph{J. Amer. Math. Soc.}, 14\penalty0 (4):\penalty0 843--939
  (electronic), 2001.

\bibitem[Bruin and Najman(2016)]{BN}
Peter Bruin and Filip Najman.
\newblock A criterion to rule out torsion groups for elliptic curves over
  number fields.
\newblock \emph{Research in Number Theory}, 2:\penalty0 1--12, 2016.

\bibitem[Darmon(1997)]{Da97}
H.~Darmon.
\newblock Faltings plus epsilon, {W}iles plus epsilon, and the generalized
  {F}ermat equation.
\newblock \emph{C. R. Math. Rep. Acad. Sci. Canada}, 19\penalty0 (1):\penalty0
  3--14, 1997.

\bibitem[Darmon and Granville(1995)]{DG}
Henri Darmon and Andrew Granville.
\newblock On the equations {$z^m=F(x,y)$} and {$Ax^p+By^q=Cz^r$}.
\newblock \emph{Bull. London Math. Soc.}, 27\penalty0 (6):\penalty0 513--543,
  1995.

\bibitem[Darmon and Merel(1997)]{DM}
Henri Darmon and Lo{\"{\i}}c Merel.
\newblock Winding quotients and some variants of {F}ermat's last theorem.
\newblock \emph{J. Reine Angew. Math.}, 490:\penalty0 81--100, 1997.

\bibitem[David(2012)]{David}
Agn\`es David.
\newblock Caract\`{e}re d'isog\'{e}nie et crit\`{e}res
  d'irr\'{e}ductibilit\'{e}.
\newblock 2012.
\newblock arXiv:1103.3892v2.

\bibitem[Demb{\'e}l{\'e} and Voight(2013)]{DV}
Lassina Demb{\'e}l{\'e} and John Voight.
\newblock Explicit methods for {H}ilbert modular forms.
\newblock In \emph{Elliptic curves, {H}ilbert modular forms and {G}alois
  deformations}, Adv. Courses Math. CRM Barcelona, pages 135--198.
  Birkh\"auser/Springer, Basel, 2013.

\bibitem[Derickx and van Hoeij(2014)]{DH}
Maarten Derickx and Mark van Hoeij.
\newblock Gonality of the modular curve {$X_1(N)$}.
\newblock \emph{J. Algebra}, 417:\penalty0 52--71, 2014.

\bibitem[Derickx et~al.(2015)Derickx, Kamienny, Stein, and Stoll]{DKSS}
Maarten Derickx, Sheldon Kamienny, William Stein, and Michael Stoll.
\newblock Torsion points on elliptic curves over number fields of small degree.
\newblock 2015.
\newblock Preprint.

\bibitem[Dieulefait and Freitas(2013)]{DF}
Luis Dieulefait and Nuno Freitas.
\newblock Fermat-type equations of signature {$(13,13,p)$} via {H}ilbert
  cuspforms.
\newblock \emph{Math. Ann.}, 357\penalty0 (3):\penalty0 987--1004, 2013.

\bibitem[Ellenberg(2004)]{El}
Jordan~S. Ellenberg.
\newblock Galois representations attached to {$\Bbb Q$}-curves and the
  generalized {F}ermat equation {$A^4+B^2=C^p$}.
\newblock \emph{Amer. J. Math.}, 126\penalty0 (4):\penalty0 763--787, 2004.

\bibitem[Freitas(2015)]{recipes}
Nuno Freitas.
\newblock Recipes to {F}ermat-type equations of the form {$x^r+y^r=Cz^p$}.
\newblock \emph{Math. Z.}, 279\penalty0 (3-4):\penalty0 605--639, 2015.

\bibitem[Freitas and Siksek(2015{\natexlab{a}})]{FS}
Nuno Freitas and Samir Siksek.
\newblock The asymptotic {F}ermat's {L}ast {T}heorem for five-sixths of real
  quadratic fields.
\newblock \emph{Compositio Mathematica}, 2015{\natexlab{a}}.

\bibitem[Freitas and Siksek(2015{\natexlab{b}})]{FSirred}
Nuno Freitas and Samir Siksek.
\newblock Criteria for the irreducibility of mod $p$ representations of {F}rey
  curves.
\newblock \emph{Journal de Th\'{e}orie des Nombres de Bordeaux},
  2015{\natexlab{b}}.

\bibitem[Freitas and Siksek(2015{\natexlab{c}})]{FSsmall}
Nuno Freitas and Samir Siksek.
\newblock Fermat's last theorem over some small real quadratic fields.
\newblock \emph{Algebra \& Number Theory}, 9:\penalty0 875--895,
  2015{\natexlab{c}}.

\bibitem[Freitas et~al.(2015)Freitas, Le~Hung, and Siksek]{FLS}
Nuno Freitas, Bao Le~Hung, and Samir Siksek.
\newblock Elliptic curves over real quadratic fields are modular.
\newblock \emph{Inventiones Mathematicae}, 2015.
\newblock \doi{10.1007/s00222-014-0550-z}.

\bibitem[Kamienny(1992)]{Kamienny}
S.~Kamienny.
\newblock Torsion points on elliptic curves and {$q$}-coefficients of modular
  forms.
\newblock \emph{Invent. Math.}, 109\penalty0 (2):\penalty0 221--229, 1992.

\bibitem[Kisin(2009)]{Kisin}
Mark Kisin.
\newblock Moduli of finite flat group schemes, and modularity.
\newblock \emph{Ann. of Math. (2)}, 170\penalty0 (3):\penalty0 1085--1180,
  2009.

\bibitem[Kraus(1997)]{Kra97}
Alain Kraus.
\newblock Majorations effectives pour l'\'equation de {F}ermat
  g\'en\'eralis\'ee.
\newblock \emph{Canad. J. Math.}, 49\penalty0 (6):\penalty0 1139--1161, 1997.

\bibitem[Kraus(2007)]{KrausNF}
Alain Kraus.
\newblock Courbes elliptiques semi-stables sur les corps de nombres.
\newblock \emph{Int. J. Number Theory}, 3\penalty0 (4):\penalty0 611--633,
  2007.

\bibitem[Langlands(1980)]{Langlands}
Robert~P. Langlands.
\newblock \emph{Base change for {${\rm GL}(2)$}}, volume~96 of \emph{Annals of
  Mathematics Studies}.
\newblock Princeton University Press, Princeton, N.J.; University of Tokyo
  Press, Tokyo, 1980.

\bibitem[Merel(1996)]{Merel}
Lo{\"{\i}}c Merel.
\newblock Bornes pour la torsion des courbes elliptiques sur les corps de
  nombres.
\newblock \emph{Invent. Math.}, 124\penalty0 (1-3):\penalty0 437--449, 1996.

\bibitem[Miller(2014)]{Miller}
John~C. Miller.
\newblock Class numbers of real cyclotomic fields of composite conductor.
\newblock \emph{LMS J. Comput. Math.}, 17\penalty0 (suppl. A):\penalty0
  404--417, 2014.

\bibitem[Parent(2000)]{Parent1}
Pierre Parent.
\newblock Torsion des courbes elliptiques sur les corps cubiques.
\newblock \emph{Ann. Inst. Fourier (Grenoble)}, 50\penalty0 (3):\penalty0
  723--749, 2000.

\bibitem[Parent(2003)]{Parent2}
Pierre Parent.
\newblock No 17-torsion on elliptic curves over cubic number fields.
\newblock \emph{J. Th\'eor. Nombres Bordeaux}, 15\penalty0 (3):\penalty0
  831--838, 2003.

\bibitem[Serre(1972)]{Serre72}
Jean-Pierre Serre.
\newblock Propri\'et\'es galoisiennes des points d'ordre fini des courbes
  elliptiques.
\newblock \emph{Invent. Math.}, 15\penalty0 (4):\penalty0 259--331, 1972.

\bibitem[Siksek(2012)]{IHP}
Samir Siksek.
\newblock The modular approach to {D}iophantine equations.
\newblock In \emph{Explicit methods in number theory}, volume~36 of
  \emph{Panor. Synth\`eses}, pages 151--179. Soc. Math. France, Paris, 2012.

\bibitem[Silverman(1994)]{SilII}
Joseph~H. Silverman.
\newblock \emph{Advanced topics in the arithmetic of elliptic curves}, volume
  151 of \emph{Graduate Texts in Mathematics}.
\newblock Springer-Verlag, New York, 1994.
\newblock ISBN 0-387-94328-5.

\bibitem[Sinnott(1978)]{Sinnott}
W.~Sinnott.
\newblock On the {S}tickelberger ideal and the circular units of a cyclotomic
  field.
\newblock \emph{Ann. of Math. (2)}, 108\penalty0 (1):\penalty0 107--134, 1978.

\bibitem[Skinner and Wiles(1999)]{SkinnerWiles}
C.~M. Skinner and A.~J. Wiles.
\newblock Residually reducible representations and modular forms.
\newblock \emph{Inst. Hautes \'Etudes Sci. Publ. Math.}, \penalty0
  (89):\penalty0 5--126 (2000), 1999.

\bibitem[Swinnerton-Dyer(1973)]{SwD}
H.~P.~F. Swinnerton-Dyer.
\newblock On {$l$}-adic representations and congruences for coefficients of
  modular forms.
\newblock In \emph{Modular functions of one variable, {III} ({P}roc.
  {I}nternat. {S}ummer {S}chool, {U}niv. {A}ntwerp, 1972)}, pages 1--55.
  Lecture Notes in Math., Vol. 350. Springer, Berlin, 1973.

\bibitem[Thorne(2015)]{Thorne}
J.~A. Thorne.
\newblock Automorphy of some residually dihedral {G}alois representations.
\newblock \emph{Math. Ann.}, 2015.
\newblock to appear.

\bibitem[Wiles(1995)]{Wiles}
Andrew Wiles.
\newblock Modular elliptic curves and {F}ermat's last theorem.
\newblock \emph{Ann. of Math. (2)}, 141\penalty0 (3):\penalty0 443--551, 1995.

\end{thebibliography}

\end{document}